\documentclass[a4paper,10pt]{amsart}
\usepackage{amsmath,amssymb,amsxtra,amsfonts,microtype}
\usepackage{amsthm,url,comment,enumerate,comment}

\usepackage{tikz} 
\usetikzlibrary{arrows,backgrounds,decorations,automata}

\theoremstyle{plain}
\newtheorem{theorem}{Theorem}
\newtheorem{lemma}{Lemma}
\newtheorem{corollary}{Corollary}
\newtheorem{proposition}{Proposition}

\theoremstyle{definition}
\newtheorem{remark}{Remark}
\newtheorem{definition}{Definition}
\newtheorem{example}{Example}

\newcommand{\N}{\mathbb{N}}
\newcommand{\R}{{\mathbb R}}
\newcommand{\Z}{{\mathbb Z}}
\newcommand{\Q}{{\mathbb Q}}
\newcommand{\mc}{\mathcal}
\newcommand{\ID}{\mc {D_I}}

\newcommand{\C}{\mathcal}

\newcommand{\gs}{\sigma}

\newcommand{\me}{\rm e}
\newcommand{\mi}{\rm i}
\newcommand{\TT}{\mathbb{T}}
\newcommand{\FT}{\operatorname{FT}}

\makeatletter
\DeclareRobustCommand\bigop[2][1]{%
  \mathop{\vphantom{\sum}\mathpalette\bigop@{{#1}{#2}}}\slimits@
}
\newcommand{\bigop@}[2]{\bigop@@#1#2}
\newcommand{\bigop@@}[3]{%
  \vcenter{%
    \sbox\z@{$#1\sum$}%
    \hbox{\resizebox{\ifx#1\displaystyle#2\fi\dimexpr\ht\z@+\dp\z@}{!}{$\m@th#3$}}%
  }%
}
\makeatother

\newcommand{\bigast}{\DOTSB\bigop{\ast}}

\newcommand{\exend}{\hfill $\Diamond$}

             %matrix
   %smallmatrix

\setlength{\marginparsep}{2mm}
\setlength{\marginparwidth}{30mm}

\makeatletter
  \def\vhrulefill#1{\leavevmode\leaders\hrule\@height#1\hfill \kern\z@}
\makeatother

\newcommand{\Span}{\operatorname{span}}

\newcounter{nootje}
\setcounter{nootje}{1}
\newcommand\noot[1]
%  {{~}\marginpar{\vbox{\tiny\thenootje : #1}}\addtocounter{nootje}{1}}
 {\marginpar{\small\begin{minipage}{30mm}\begin{flushleft}\thenootje : #1\end{flushleft}\end{minipage}}\addtocounter{nootje}{1}}
\setlength{\marginparsep}{6mm}
\setlength{\marginparwidth}{30mm}

\begin{document}

\title[Regular sequences:~parametrisation and spectral characterisation]{Spectral theory of regular sequences:\\ parametrisation and spectral characterisation}

\subjclass[2010]{Primary 11K31; Secondary 37A30, 37A44, 47A35}
\keywords{regular sequences, aperiodic order, Lebesgue decomposition, finiteness property}

\author{Michael Coons}
\address{Department of Mathematics and Statistics, California State University, \newline
\hspace*{\parindent}400 West First Street, Chico, CA 95926, USA}
\email{mjcoons@csuchico.edu}

\address{Fakult\"at f\"ur Mathematik, Universit\"at Bielefeld, \newline
\hspace*{\parindent}Postfach 100131, 33501 Bielefeld, Germany}
\email{cmanibo@math.uni-bielefeld.de}

\author{James Evans}
\address{School of Information and Physical Sciences, 
   University of Newcastle, \newline
\hspace*{\parindent}University Drive, Callaghan NSW 2308, Australia}
\email{james.evans10@uon.edu.au}

\author{Philipp Gohlke}
\address{Lund University, Centre for Mathematical Sciences, \newline
\hspace*{\parindent}Box 118, 221 00 Lund, Sweden}
\email{philipp\_nicolai.gohlke@math.lth.se}

\author{Neil Ma\~nibo}

\date{\today}

%%%%%%%%%%%%%%%%%%%%%%%%%%%%%%%%%%%%%%%%%%%%%%%%
\begin{abstract} We extend the existence of ghost measures beyond nonnegative primitive regular sequences to a large class of nonnegative real-valued regular sequences. In the general case, where the ghost measure is not unique, we show that they can be parametrised by a compact abelian group. For a subclass of these measures, by replacing primitivity with a commutativity condition, we show that these measures have an infinite convolution structure similar to Bernoulli convolutions. Using this structure, we show that these ghost measures have pure spectral type. Further, we provide results towards a classification of the spectral type based on inequalities involving the spectral radius, joint spectral radius, and Lyapunov exponent of the underlying set of matrices. In the case that the underlying measure is pure point, we show that the support of the measure must be a subset of the rational numbers, a result that resolves a new case of the finiteness conjecture. 
\end{abstract}
%%%%%%%%%%%%%%%%%%%%%%%%%%%%%%%%%%%%%%%%%%%%%%%%

\maketitle

%%%%%%%%%%%%%%%%%%%%%%%%%%%%%%%%%%%%%%%%%%%%%%
\section{Introduction}
%%%%%%%%%%%%%%%%%%%%%%%%%%%%%%%%%%%%%%%%%%%%%%

Let $k\geqslant 2$ be an integer. A real-valued sequence $f$ is called {\em $k$-regular} provided its $k$-kernel, ${\rm ker}_k ({f}):=\left\{(f(k^\ell n+r))_{n\geqslant 0}: \ell\geqslant 0, 0\leqslant r<k^\ell\right\},$ generates a finite dimensional $\mathbb{R}$-vector space $V_k(f)$. The sequence $f$ is {\em $k$-automatic} if and only if ${\rm ker}_k ({f})$ is finite. The dynamical properties of automatic sequences, through their related substitution systems, are well known; see the monographs of Queff\'elec \cite{Qbook} and Baake and Grimm \cite{BGbook}. Towards generalising the dynamical properties of automatic sequences, three of us---Coons, Evans and Ma\~nibo \cite{CEM1}---constructed measures from real-valued primitive regular sequences based on their linear representations, which extended work of Baake and Coons \cite{BC2018}. 

Given a $k$-regular sequence $f$, let $\{f_1,f_2,\ldots,f_d\}\subseteq {\rm ker}_k ({f})$ be a basis for ${V}_k(f)$. 
We call $d = \dim V_k(f)$ the \emph{degree} of $f$.
Set ${\bf f}(n)=(f_1(n),f_2(n),\ldots,f_d(n))^T$ and for each $j\in\{0,\ldots,k-1\}$, let ${\bf A}_j$ be the $d\times d$ real matrix such that, for all $n\geqslant 0$, ${\bf f}(kn+j)^T={\bf f}(n)^T{\bf A}_j.$ 
We refer the reader to the seminal paper of Allouche and Shallit \cite{AS1992} and Nishioka's monograph \cite[Ch.~15]{N1996} for details on existence and the finer definitions. Setting ${\bf u} := {\bf f}(0)$ we have for each $i\in\{1,\ldots,d\}$ and $n>0$ that $$f_i(n)={\bf u}^T{\bf A}_{(n)_k}{\bf e}_i ={\bf u}^T{\bf A}_{i_s}\cdots{\bf A}_{i_{1}}{\bf A}_{i_0}{\bf e}_i,$$ where ${\bf e}_i$ is the $i$th elementary column vector, $(n)_k=i_s\cdots i_1i_0$ is the base-$k$ expansion of $n$ and ${\bf A}_{(n)_k}:={\bf A}_{i_s}\cdots {\bf A}_{i_1}{\bf A}_{i_0}.$ Set $\mathcal{A}:=\{{\bf A}_0,\ldots,{\bf A}_{k-1}\}$ and denote the {\em sum matrix} by ${\bf A}:=\sum_{j=0}^{k-1}{\bf A}_j$. Since $f\in V_k(f)$, there is a ${\bf v}\in\mathbb{R}^{d\times 1}$ such that $f(n)={\bf u}^T {\bf A}_{(n)_k}{\bf v}$ for all $n\geqslant 0$. We call such a tuple $({\bf u},\mathcal{A},{\bf v})$ a {\em  canonical linear representation} of $f$. 

With some abuse of notation, we sometimes write $f=({\bf u},\mathcal{A},{\bf v})$. 
 More generally, for every $d \in \N$, every tuple $({\bf u},\mathcal{A},{\bf v})$ with ${\bf u}, {\bf v} \in \R^{d \times 1}$ and ${\bf A}_i \in \R^{d \times d}$ gives rise to a $k$-regular sequence via the relation $f(n) = {\bf u}^T {\bf A}_{(n)_k} \bf v$, see \cite[Lemma~4.1]{AS1992}. In that case, we call $({\bf u},\mathcal{A},{\bf v})$ a \emph{linear representation} of $f$ and refer to $d$ as the \emph{degree of the representation}.
\begin{remark}
The degree of a linear representation $({\bf u},\mathcal{A},{\bf v})$ of a $k$-regular sequence $f$ can in general be larger (but never smaller) than the degree of $f$. If the two degrees coincide we say that the linear representation is \emph{minimal}. A canonical linear representation is always minimal by construction. It also has the particular properties that ${\bf u}^T = {\bf u}^T {\bf A}_0$ and that $f$ being nonnegative implies that ${\bf u}^T {\bf A}_{(n)_k}$ is a nonnegative vector for all $n \in \N_0$. This can fail for more general linear representations. However, as we will see in Lemma~\ref{LEM:minimal-basis-chage}, every minimal linear representation is related to a canonical one via a change of basis.
\end{remark}

In the following, we assume that $({\bf u},\mathcal{A},{\bf v})$ is a canonical linear representation of a nonnegative $k$-regular sequence $f$. Now, set \begin{equation*}\label{eq:Sigmai}\Sigma_f(N):=\sum_{m=k^N}^{k^{N+1}-1}f(m)\quad\mbox{and}\quad
   \mu^{}_{N} \, := \, \frac{1}{\Sigma_f(N)} \sum_{m=0}^{k^{N+1}-k^N - 1}
   f(k^N + m) \, \delta^{}_{m / k^N(k-1)},
\end{equation*}
where $\delta_x$ denotes the unit Dirac measure at $x$. We can view
$(\mu^{}_{N})^{}_{N\in\mathbb{N}_0}$ as a sequence of probability measures on
the $1$-torus, the latter written as $\mathbb{T}=[0,1)$ with addition modulo
$1$. Here, we have simply re-interpreted the (normalised) values of 
the sequence $(f(n))_{n\geqslant 0}$ between $k^N$ and $k^{N+1}-1$ as the weights of a pure point
probability measure on $\mathbb{T}$ supported on  $\big\{ {m}/{(k^N(k-1))} : 0 \leqslant m <
k^N(k-1) \big\}$. Note that $\mu_N$ is only well-defined if $\Sigma_f(N)$ is nonzero. We impose a slightly stronger assumption that precludes this degenerate situation under an appropriate normalisation. We refer to such sequences as {\em nondegenerate} and take this as a standing assumption for the remainder of this section; see Definition~\ref{def:nondegenerate} for details.

\begin{definition}
We call every accumulation point of $(\mu_N)_{N \in \N_0}$ (in the weak topology) a \emph{ghost measure} of $f$. If there is a unique ghost measure, we call it \emph{the} ghost measure of $f$ and denote it by $\mu_f$. 
\end{definition}

The behaviour of the sequence $(\mu_N)_{N \in \N_0}$ depends on the relation between the spectral radius $\rho(\bf A)$, the \emph{joint spectral} radius of the set of matrices $\mathcal{A}$ defined by
\[
\rho^* =\rho^*(\mathcal{A}):=\lim_{n\to\infty}\max_{0\leqslant i_1,i_2,\ldots,i_{n}\leqslant k-1}\big\| {\bf A}_{i_1}{\bf A}_{i_2}\cdots{\bf A}_{i_{n}}\big\|^{1/n},
\]
and the {\em Lyapunov exponent}, which is the quantity $\log_k\bar{\rho}$, with $$\bar{\rho}=\bar{\rho}(\mathcal{A}):=\lim_{n\to\infty}\Bigg(\prod_{0\leqslant i_1,i_2,\ldots,i_{n}\leqslant k-1}\big\| {\bf A}_{i_1}{\bf A}_{i_2}\cdots{\bf A}_{i_{n}}\big\|^{\frac{1}{n}}\Bigg)^{1/k^n},$$
where $\|\cdot\|$ is any (submultiplicative) matrix norm. 
With this notation, Coons, Evans and Ma\~nibo \cite{CEM1} proved that a positive real-valued $k$-regular sequence $f$ such that the spectral radius $\rho({\bf A})$ is the unique simple maximal eigenvalue of ${\bf A}$ and $\rho^*(\mathcal{A})<\rho({\bf A})$ admits a unique ghost measure $\mu_f$. In the general situation, there is not a unique ghost measure. Nonetheless, there is a cohesive structure of ghost measures which comes from the linear representation.

In this paper, after a short discussion on the basic properties of $k$-regular sequences in Section \ref{sec:prop}, in Section \ref{SEC:param}, we extend and generalise this existence result. In particular, even when the ghost measure is not unique, there is a set of ghost measures that can be parametrised by a compact abelian group $G$. Moreover, the orbit structure of the group dictates among which subsequences the measure approximants $\mu_N$ converge. The group can be obtained by restricting to the $\mathcal{A}$-invariant subspace $V^T=\Span\{{\bf u}^T{\bf A}_{(n)_k}\}_{n\in\mathbb{N}}$ and analysing the peripheral spectrum $\gs_p:=\{g_1\rho,g_2\rho,\ldots,g_s\rho\}$ of $\widetilde{\bf A}={\bf P}_V{\bf A}{\bf P}_V,$ where $\rho=\rho(\widetilde{\bf A})$, each $g_i$ is an element of the complex unit circle $S$ and ${\bf P}_V$ denotes the orthogonal projection to $V$. The compact abelian group $$G=\overline{\{g^n:n\in\mathbb{N}\}}$$ is the subgroup of the $s$-dimensional torus $S^s$ generated by the element $g=(g_1,g_2,\ldots,g_s)$, where the group operation is component-wise multiplication. Denote by $\mc M$ the set of Borel probability measures on the torus $\TT$ endowed with the weak topology.

\begin{theorem}
\label{THM:ghost-measure-group}
Let $f$ be a nonnegative and nondegenerate $k$-regular sequence. 
Then, there is a continuous map $$\Psi_f \colon G \to \mc M$$ that sends $h$ to $\mu_h$, such that the ghost measures of $f$ are given by the set $\Psi_f(G)$.
 Moreover, $ \lim_{j \to \infty } \mu_{n_j} = \mu$ if and only if $\mu = \mu_h$ for every accumulation point $h$ of $(g^{n_j})$ in $G$. 
\end{theorem}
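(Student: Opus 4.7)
The plan is to use a spectral decomposition of the restricted operator $\widetilde{\bf A}$ to track how the peripheral eigenvalues enter the Fourier coefficients of $\mu_N$, and then to define $\mu_h$ by extending the resulting asymptotic formula continuously over the compact group $G$.

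First, I would work on the $\A$-invariant subspace $V$ so that the relevant operator is $\widetilde{\bf A}={\bf P}_V {\bf A}{\bf P}_V$ with $\rho(\widetilde{\bf A})=\rho$ and peripheral spectrum $\gs_p=\{g_1\rho,\ldots,g_s\rho\}$. A Dunford-type decomposition then gives
\[
\widetilde{\bf A}^N \, = \, \rho^N \sum_{i=1}^{s} g_i^N\, {\bf P}_i \, + \, {\bf R}_N, \qquad \|{\bf R}_N\| \, = \, o(\rho^N),
\]
so the entire $N$-dependence of the leading part is encoded in $g^N=(g_1^N,\ldots,g_s^N)\in G$. Under the nondegeneracy assumption, I expect $\Sigma_f(N)=\rho^N c(g^N)+o(\rho^N)$ for a continuous positive function $c\colon G\to(0,\infty)$, so that the normalisation behaves consistently along subsequences with $g^N \to h$.

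Second, I would compute the $\ell$-th Fourier coefficient of $\mu_N$ explicitly. Splitting $[k^N, k^{N+1})$ by the leading digit $j\in\{1,\ldots,k-1\}$, writing $f(jk^N+m) = {\bf u}^T {\bf A}_j {\bf A}_{(m)_k^{(N)}}{\bf v}$ for $m\in[0,k^N)$, and exploiting the factorisation of $\me^{-2\pi\mi \ell m/(k^N(k-1))}$ over the $k$-ary digits of $m$, the inner sum becomes a matrix product
\[
\prod_{t=1}^{N} {\bf C}_\ell^{(t)} \qquad \text{with} \qquad {\bf C}_\ell^{(t)} \, := \, \sum_{i=0}^{k-1} \me^{-2\pi\mi\ell i / (k^t(k-1))} {\bf A}_i,
\]
ordered with $t=1$ on the left. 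As $t\to\infty$, ${\bf C}_\ell^{(t)}\to {\bf A}$, so after restriction to $V$ the tail of the product behaves like an iterate of $\widetilde{\bf A}$ applied to a vector determined by the head (which stabilises as $N\to\infty$). Substituting the spectral decomposition and dividing by $\Sigma_f(N)$ yields
\[
\widehat{\mu}_N(\ell) \, = \, F_\ell(g^N) \, + \, o(1)
\]
for an explicit continuous function $F_\ell \colon G \to \BbC$.

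Third, for each $h\in G$ the sequence $(F_\ell(h))_{\ell\in\Z}$ is positive definite with $F_0(h)=1$: these properties are inherited from the $\mu_N$'s by passing to the limit along any subsequence $(n_j)$ with $g^{n_j}\to h$, which exists by compactness of $G$. Bochner's theorem produces a unique $\mu_h\in\mc M$ with $\widehat{\mu_h}(\ell)=F_\ell(h)$, and I set $\Psi_f(h):=\mu_h$. Continuity of $\Psi_f$ in the weak topology reduces to pointwise continuity of each $F_\ell$, which is built into the construction; together with the uniform mass-one bound this is enough by Lévy continuity. For the convergence characterisation, if $g^{n_j}\to h$ then $\widehat{\mu}_{n_j}(\ell)\to F_\ell(h)=\widehat{\mu_h}(\ell)$ for all $\ell$, so $\mu_{n_j}\to \mu_h$ weakly. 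Conversely, if $\mu_{n_j}\to\mu$ then every subsequence of $(g^{n_j})$ has a further subsequence converging to some $h\in G$, and the previous direction forces $\mu=\mu_h$ for every such accumulation point. Letting $(n_j)$ run over $\N$ identifies the set of ghost measures with $\Psi_f(G)$.

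The main obstacle is the asymptotic in the second step: one must control the ``head'' part of $\prod_{t=1}^{N}{\bf C}_\ell^{(t)}$ and its interaction with the peripheral projectors ${\bf P}_i$, uniformly in $\ell$ on finite sets and with an error compatible with $o(\rho^N)$. In particular, one needs that the rescaled quantity $\rho^{-N_0}\bigl(\prod_{t=1}^{N_0}{\bf C}_\ell^{(t)}\bigr){\bf P}_i$ converges as $N_0\to\infty$ within the peripheral eigenspace, which is what makes $F_\ell$ well-defined independently of any cutoff and continuous on $G$. Possible Jordan structure within the peripheral spectrum (which would introduce polynomial prefactors) has to be ruled out or absorbed, and is presumably excluded by the exact form of the nondegeneracy hypothesis.
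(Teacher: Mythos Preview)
Your approach via Fourier coefficients is genuinely different from the paper's, which works instead with the values $\mu_n(I_{\ell,m})$ on the $k$-adic intervals in \eqref{eq:Ik}. The decisive advantage of the interval route is that for fixed $\ell,m$ one has the closed form
\[
\mu_n(I_{\ell,m}) \;=\; \frac{{\bf u}^T {\bf A}_{(m)_k}\, \widetilde{\bf A}^{\,n-\ell}\, {\bf v}}{\Sigma_f(n)},
\]
a fixed ``head'' ${\bf u}^T {\bf A}_{(m)_k}$ followed by an honest power of $\widetilde{\bf A}$. One then only needs the asymptotics of $c_n {\bf R}^n\widetilde{\bf A}^n$ (Lemma~\ref{LEM:RA-limit}), after which $\mu_n(I_{\ell,m})\sim C_m(g^n)$ for an explicit continuous $C_m\colon G\to\R$, and weak convergence together with continuity of $\Psi_f$ follow from an elementary partition lemma (Lemma~\ref{LEM:weak-convergence}). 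In your Fourier picture, by contrast, the numerator is a product $\prod_{t=1}^{N}{\bf C}_\ell^{(t)}$ of \emph{perturbations} of $\widetilde{\bf A}$, and extracting the $g^N$-dependence requires exactly the infinite-product analysis the paper postpones to Section~\ref{sec:structure}. There it is carried out only for a \emph{reduced} representation, where an invariant cone (Lemma~\ref{LEM:cone-condition} and Corollary~\ref{cor:Bcone}) and the bound $\sup_N\|(\rho^{-1}\widehat{\bf B})^N\|<\infty$ (Lemma~\ref{lem:B}) are available. For general $\widetilde{\mc A}$ on $V$ the matrices ${\bf C}_\ell^{(t)}$ are complex and preserve no cone, so there is no evident uniform control on $\bigl\|\prod_{t=M}^{N}{\bf C}_\ell^{(t)}\bigr\|$; the telescoping estimate you have in mind then lacks its crucial ingredient. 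You correctly flag this as ``the main obstacle'', but the hypotheses of Theorem~\ref{THM:ghost-measure-group} do not by themselves supply a way to close it, and the paper simply sidesteps the issue by using intervals.

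There is also a smaller misstep regarding the peripheral Jordan structure: nondegeneracy is a positivity condition on ${\bf u}^T({\bf A}-{\bf A}_0){\bf v}'$ for ${\bf v}'\in V_{\lim}$ and does \emph{not} force the peripheral spectrum to be semisimple. The paper explicitly allows peripheral Jordan blocks of size $r\geqslant 1$ and compensates with the normalisation $c_n=(r-1)!/(n^{r-1}\rho^{\,n-r+1})$; your decomposition $\widetilde{\bf A}^N=\rho^N\sum_i g_i^N{\bf P}_i+o(\rho^N)$ is therefore incorrect as written. This is easily repaired (the polynomial prefactor cancels in the ratio with $\Sigma_f(N)$), so the real gap remains the first point.
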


\noindent As an immediate corollary, we have the following result.

\begin{corollary}\label{cor:unique}
If $f$ is a nonnegative $k$-regular sequence and $\rho(\widetilde{\bf A})$ is the unique eigenvalue of maximal modulus of $\widetilde{\bf A}$, then, $f$ has a unique ghost measure $\mu_f$, given by $\mu_f = \lim_{N \to \infty} \mu_N.$
\end{corollary}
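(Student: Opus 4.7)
The proof is almost immediate from Theorem~\ref{THM:ghost-measure-group}. The plan is to observe that the hypothesis on $\widetilde{\bf A}$ forces the parametrising group $G$ to be trivial, and to extract both the uniqueness of the ghost measure and the convergence of the full sequence $(\mu_N)$ from this collapse.

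First I would unpack the peripheral spectrum under the hypothesis. By definition, $\gs_p = \{g_1 \rho, g_2 \rho, \ldots, g_s \rho\}$ with each $g_i \in S$. The assumption that $\rho(\widetilde{\bf A})$ is the unique eigenvalue of maximal modulus of $\widetilde{\bf A}$ means $\gs_p = \{\rho\}$, so $s = 1$ and $g_1 = 1$. Hence the generator of the group is $g = 1 \in S$, and $G = \overline{\{g^n : n \in \N\}} = \{1\}$.

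Next I would apply Theorem~\ref{THM:ghost-measure-group} with this trivial $G$. Since $\Psi_f \colon G \to \mc M$ is a map from a one-point set, its image $\Psi_f(G) = \{\Psi_f(1)\}$ is a singleton, so $f$ admits a unique ghost measure $\mu_f := \Psi_f(1)$. For convergence of the full sequence, recall that $\mc M$ is compact in the weak topology (being the set of Borel probability measures on the compact space $\TT$), so every subsequence of $(\mu_N)_{N \in \N_0}$ has a further convergent subsequence. If $\mu_{n_j} \to \mu$, then by the second part of Theorem~\ref{THM:ghost-measure-group}, $\mu = \mu_h$ for every accumulation point $h$ of $(g^{n_j}) = (1)_j$ in $G$; the only such accumulation point is $1$, forcing $\mu = \mu_f$. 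A standard subsequence argument then gives $\mu_N \to \mu_f$.

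There is no real obstacle here beyond correctly identifying that ``unique eigenvalue of maximal modulus'' is exactly the spectral condition that collapses the peripheral phase data, and hence $G$, to a point; the rest is formal consequence of the main theorem and compactness of $\mc M$.
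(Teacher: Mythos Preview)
Your proposal is correct and matches the paper's approach exactly: the paper presents this as an ``immediate corollary'' of Theorem~\ref{THM:ghost-measure-group} without writing out a separate proof, and your argument is precisely the natural unpacking of that immediacy. One small point worth noting is that Theorem~\ref{THM:ghost-measure-group} requires nondegeneracy, which is not explicitly assumed in the corollary; the paper handles this via a separate lemma showing that a unique eigenvalue of maximal modulus automatically yields nondegeneracy, so your invocation of the theorem is justified.
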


\noindent In the setting of nonnegative matrices, the group $G$ is finite, and hence all orbits are periodic. Recall that given a nonnegative matrix ${\bf A}$, there is a number $n\in\N$ such that for each eigenvalue $\lambda$ of ${\bf A}$ with $|\lambda| = \rho({\bf A})$, we have $\lambda^n \in \R$. We call the smallest such number the \emph{period} of $\bf A$. The period coincides with the cardinality of $G$, and so, bounds the number of ghost measures.

\begin{theorem}\label{thm:probnonneg}
Let $f$ be a nonnegative real-valued $k$-regular sequence with canonical representation $({\bf u},\mathcal{A}, {\bf v})$ and assume that all $\widetilde{\bf A}_i$ are nonnegative matrices. Let $p$ be the period of $\widetilde{\bf A}$. Then, there are probability measures $\mu_{f,0},\ldots,\mu_{f,p-1}$ such that
\[
\lim_{n \to \infty} \mu_{pn + j} = \mu_{f,j}, 
\]
for all $0\leqslant j \leqslant p-1$ in the sense of weak convergence. 
\end{theorem}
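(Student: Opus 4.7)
The plan is to derive Theorem~\ref{thm:probnonneg} as a direct consequence of Theorem~\ref{THM:ghost-measure-group}, exploiting the fact that in the nonnegative setting the parametrising group $G$ is finite cyclic of order $p$.

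First I would justify the assertion $|G| = p$ from the preceding discussion. Since each $\widetilde{\bf A}_i$ is nonnegative, so is $\widetilde{\bf A} = \sum_i \widetilde{\bf A}_i$. Applying Perron--Frobenius theory to the irreducible blocks of $\widetilde{\bf A}$ that attain the spectral radius $\rho$, every peripheral eigenvalue is of the form $\rho\zeta$ with $\zeta$ a root of unity; by definition, the period $p$ is the smallest positive integer such that $\zeta^p = 1$ for all these roots simultaneously. Writing the peripheral spectrum as $\sigma_p = \{g_1\rho,\dots,g_s\rho\}$ and $g = (g_1,\dots,g_s) \in S^s$, this says exactly that $g$ has order $p$ in the torus $S^s$. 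Hence $G = \overline{\{g^n : n \in \N\}} = \{e, g, g^2, \dots, g^{p-1}\}$ is already finite of cardinality $p$, and this Perron--Frobenius input is the only genuinely nontrivial step.

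With this identification in hand, the theorem follows immediately from the second assertion of Theorem~\ref{THM:ghost-measure-group}. Fix $0 \leqslant j \leqslant p-1$ and consider the subsequence $(\mu_{pn+j})_{n \in \N_0}$. Because $g^p = e$, the sequence $(g^{pn+j})_{n \in \N_0}$ is constant with value $g^j$, so $g^j$ is its unique accumulation point in $G$. Theorem~\ref{THM:ghost-measure-group} then forces $\mu_{pn+j} \to \mu_{g^j}$ weakly as $n \to \infty$, and setting $\mu_{f,j} := \Psi_f(g^j)$ yields the desired probability measures.
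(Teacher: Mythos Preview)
Your proposal is correct and follows essentially the same route as the paper's proof (given as Theorem~\ref{PROP:ghost-measures-in-rational-case}): use Perron--Frobenius to see that the peripheral spectrum of the nonnegative matrix $\widetilde{\bf A}$ consists of roots of unity times $\rho$, so that $G$ is finite cyclic of order $p$, and then invoke Theorem~\ref{THM:ghost-measure-group} to obtain convergence along each arithmetic progression $pn+j$. Your write-up is in fact slightly more careful than the paper's in noting that one should pass to the irreducible blocks attaining $\rho$, and in spelling out why the constancy of $(g^{pn+j})_n$ feeds directly into the ``if and only if'' clause of Theorem~\ref{THM:ghost-measure-group}.
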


\noindent Section \ref{SEC:param} contains several other results, including a level-set procedure for constructing ghost measures, as well as the development of a reduced representation of a nonnegative regular sequence.

In Section \ref{sec:spectype}, we study the Lebesgue decomposition of ghost measures. In \cite{CEM1}, under the condition of $\rho^*(\mathcal{A})<\rho(\bf{A})$, it was shown that the ghost measure $\mu_f$ is continuous. We extend this result, undergoing a further reduction to a subspace $\widehat{V}$ that captures the limiting behaviour. We write $\widehat{\bf A}$ and $\widehat{\mc A}$ for the corresponding restrictions of $\bf A$ and $\mc A$ to this subspace; for the precise definitions see \eqref{def:vhat}.  

\begin{theorem}
\label{PROP:continuity-condition}
We have $\rho^*(\widehat{{\mathcal{A}}}) < \rho(\widehat{{\bf A}})$ if and only if all ghost measures $\mu_h$ with $h \in G$ are continuous.
\end{theorem}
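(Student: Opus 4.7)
The approach is to analyse the atoms of $\mu_N$ directly via their mass formulae. After the reduction to the subspace $\widehat{V}$, each atom of $\mu_N$ at $m/(k^N(k-1))$ carries mass $\widehat{\bf u}^T \widehat{\bf A}_{(k^N+m)_k}\widehat{\bf v}/\Sigma_f(N)$, and nondegeneracy together with the construction of $\widehat{V}$ gives $\Sigma_f(N) \asymp \rho(\widehat{\bf A})^N$. The question of continuity of the ghost measures thus reduces to comparing the growth of matrix products in $\widehat{\mathcal{A}}$ with this normalisation, that is, to comparing $\rho^*(\widehat{\mathcal{A}})$ with $\rho(\widehat{\bf A})$.

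For the forward implication, assume $\rho^*(\widehat{\mathcal{A}}) < \rho(\widehat{\bf A})$ and fix $r$ with $\rho^*(\widehat{\mathcal{A}}) < r < \rho(\widehat{\bf A})$. By the definition of the joint spectral radius, there exists $C>0$ with $\|\widehat{\bf A}_{i_1}\cdots\widehat{\bf A}_{i_\ell}\| \leqslant C r^\ell$ for every word of length $\ell$. Summing atom masses over admissible tails, the $\mu_N$-mass of any depth-$\ell$ base-$k$ cylinder in $\mathbb{T}$ is bounded above by $C'(r/\rho(\widehat{\bf A}))^\ell$, uniformly in $N \geqslant \ell$. Transferring this bound to any weak limit $\mu_h$ via the portmanteau theorem, and using that the cylinders of arbitrary depth form a neighbourhood basis at every point of $\mathbb{T}$, we conclude $\mu_h(\{x\}) = 0$ for all $x$, so every ghost measure is continuous.

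For the converse, I would argue the contrapositive: assume $\rho^*(\widehat{\mathcal{A}}) \geqslant \rho(\widehat{\bf A})$ and produce a ghost measure with a non-trivial pure point part. For each $N$, select a word $w_N$ of length $N$ nearly maximising $\|\widehat{\bf A}_{w_N}\|$, so that $\|\widehat{\bf A}_{w_N}\|^{1/N} \to \rho^*(\widehat{\mathcal{A}})$. The corresponding atom $x_N \in \mathbb{T}$ of $\mu_N$ then carries mass of order $(\rho^*(\widehat{\mathcal{A}})/\rho(\widehat{\bf A}))^N$, which is bounded below. By compactness of $\mathbb{T}$ and $G$, extract a subsequence $(N_j)$ along which $x_{N_j} \to x^*$ and $g^{N_j} \to h$; Theorem~\ref{THM:ghost-measure-group} then gives $\mu_{N_j} \to \mu_h$ weakly, and a standard portmanteau argument applied to a shrinking basis of open neighbourhoods of $x^*$ yields $\mu_h(\{x^*\}) > 0$.

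The main obstacle lies in the boundary case $\rho^*(\widehat{\mathcal{A}}) = \rho(\widehat{\bf A})$ of the converse, where the exponential factor $(\rho^*/\rho)^N$ equals one and the subexponential prefactor in $\|\widehat{\bf A}_{w_N}\|/(\rho^*)^N$ need not be bounded below, so the naive maximising strategy above can fail. Resolving this requires exploiting the structural properties of $\widehat{V}$: since the reduction already removes all subspaces on which matrix products grow at rates strictly below $\rho(\widehat{\bf A})$, the equality $\rho^*(\widehat{\mathcal{A}}) = \rho(\widehat{\bf A})$ must be witnessed by a finiteness-type word $w$ with $\rho(\widehat{\bf A}_w) = \rho(\widehat{\bf A})^{|w|}$, whose periodic iterates produce atoms that persist through the weak limit. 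Establishing this finiteness-type property from the construction of $\widehat{V}$ is the technical heart of the argument.
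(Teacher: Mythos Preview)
Your forward direction is essentially the paper's argument: bound the matrix products by $Cr^\ell$ with $r<\rho$, compare against the normalisation, and conclude that all point masses vanish. The paper phrases this through the explicit point-mass formula in Corollary~\ref{COR:point-mass-sequence} rather than cylinder measures, but the content is the same.

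The converse is where you diverge, and your proposed resolution of the boundary case $\rho^*=\rho$ is the wrong one. You correctly identify the obstacle: selecting $w_N$ with $\|\widehat{\bf A}_{w_N}\|^{1/N}\to\rho^*$ does not by itself guarantee a uniform lower bound $\|\widehat{\bf A}_{w_N}\|\geqslant c\,(\rho^*)^N$. But your fix --- establishing a finiteness-type word with $\rho(\widehat{\bf A}_w)=\rho^{|w|}$ --- is both unnecessary and unavailable here. The finiteness property is false for general real matrices, and the matrices $\widehat{\bf A}_i$ are only known to preserve a cone (Lemma~\ref{LEM:cone-condition}), not to be nonnegative; the finiteness result the paper does prove (Corollary~\ref{cor:fc}) comes \emph{later}, requires nonnegativity, and is not used in this proof. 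Your description of $\widehat{V}$ as ``removing subspaces of slower growth'' is also not what that space does: it is the cyclic span of the limit vectors ${\bf R}_h{\bf P}{\bf v}$, which says nothing directly about growth rates of arbitrary products.

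The paper instead invokes a known lower bound from the joint spectral radius literature \cite[Prop.~1.6]{J2009}: for \emph{every} $N$ one has $\max\|\widehat{\bf A}_{i_1}\cdots\widehat{\bf A}_{i_N}\|\geqslant d_1(\rho^*)^N$ with a fixed $d_1>0$, no finiteness property required. A second step you glossed over is converting a large matrix \emph{norm} into a large \emph{scalar} ${\bf u}^T{\bf A}_{w'}\widehat{\bf A}_{w_N}\widetilde{\bf A}_w{\bf R}_h{\bf P}{\bf v}$: this needs a pigeonhole argument over a finite set of basis vectors for $\widehat V$ (of the form $\widetilde{\bf A}_w{\bf R}_h{\bf P}{\bf v}$) and for $V$ (of the form ${\bf u}^T{\bf A}_{w'}$), fixing $w,w',h$ along a subsequence. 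After that, your compactness-plus-Portmanteau endgame is exactly what the paper does.
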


\noindent In fact, in this more general context, we can say much more than in \cite{CEM1}. Recall, any finite real Borel measure $\mu$ on $\mathbb{T}$ has a {\em generalised Lebesgue decomposition}; that is, $\mu$ is the sum of three mutually singular measures $\mu_{\rm pp}$, $\mu_{\rm sc}$ and $\mu_{\rm ac}$, where, with respect to Lebesgue measure $\lambda$, $\mu_{\rm pp}$ is pure point (the so-called Bragg part), $\mu_{\rm sc}$ is singular continuous and $\mu_{\rm ac}$ is absolutely continuous. Depending on the strictness of inequalities in the fundamental inequality, $$\bar{\rho}\leqslant \frac{\rho}{k}\leqslant \rho^*\leqslant \rho,$$ which holds for cone-preserving matrix semigroups, we can be more specific. 
For example, if $\bar{\rho}<\rho/k$, then all continuous ghost measures are singular. For a more restricted class, we have the following classification.

\begin{theorem}\label{thm:specclass} Suppose that the matrices in $\widehat{\mathcal{A}}$ are nonnegative, do not share a nontrivial invariant subspace, and that $\rho^*:= \rho^*(\widehat{\mathcal{A}})< \rho:=\rho(\widehat{\bf A})$. Then 
\begin{enumerate}
\item[(i)] $\rho/k<\rho^*$ if and only if all ghost measures are singular continuous, and 
\item[(ii)] $\rho/k=\rho^*$ if and only if all ghost measures are absolutely continuous.
\end{enumerate}
\end{theorem}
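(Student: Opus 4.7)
The hypothesis $\rho^* < \rho$ together with Theorem~\ref{PROP:continuity-condition} already guarantees that every ghost measure $\mu_h$, $h \in G$, is continuous. Since the fundamental inequality gives $\rho/k \leqslant \rho^*$, the two cases $\rho/k = \rho^*$ and $\rho/k < \rho^*$ are exhaustive, and pure absolute continuity and pure singular continuity are mutually exclusive for a nonzero continuous probability measure on $\TT$. It therefore suffices to establish the forward direction of each of (i) and (ii); the reverse implications then follow automatically from this dichotomy.

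For (ii), assuming $\rho^* = \rho/k$, I would consider the step-function densities $\phi_N$ on $\TT$ obtained by spreading each atom of $\mu_N$ uniformly across the arc of length $1/(k^N(k-1))$ centred at it. Nonnegativity of $\widehat{\mc A}$ together with the no-common-invariant-subspace hypothesis permits a Perron--Frobenius-type argument (already implicit in the reduction of Section~\ref{SEC:param}) giving $\Sigma_f(N) \asymp \rho^N$ and $f(k^N + m) \leqslant C \, \|\widehat{\bf A}_{i_N}\cdots\widehat{\bf A}_{i_0}\|$ uniformly in $m$, where $i_0 \cdots i_N$ is the base-$k$ expansion of $k^N + m$. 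Hence the weight of $\mu_N$ on any atom is $O((\rho^*/\rho)^N)$ and $\|\phi_N\|_{\infty} \leqslant C \,(k\rho^*/\rho)^N = C$ uniformly in $N$. A weak-$\ast$ cluster point of $(\phi_N)$ in $L^\infty(\TT)$ is then a bounded Radon--Nikodym derivative for the corresponding ghost measure, establishing absolute continuity.

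For (i), assuming $\rho^* > \rho/k$, the densities $\phi_N$ are no longer uniformly bounded, and the strategy is to localise the mass of $\mu_N$ on exponentially few intervals. For $\eps > 0$, let $W_N(\eps)$ be the set of length-$(N+1)$ index words whose matrix product has norm at least $(\rho^*)^N\me^{-\eps N}$. An irreducibility-based argument shows that atoms indexed by words outside $W_N(\eps)$ collectively carry only an exponentially small fraction of the total mass, while a pigeonhole estimate (each such atom contributing at most $C(\rho^*/\rho)^N$) bounds $|W_N(\eps)|$ by $(\rho/\rho^*)^N\me^{O(\eps)N}$. The union of the associated arcs of length $1/(k^N(k-1))$ therefore has Lebesgue measure at most $(\rho/(k\rho^*))^N\me^{O(\eps)N}$, which tends to zero for $\eps$ small since $\rho^* > \rho/k$. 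A standard diagonal construction then produces a Borel null set carrying every weak limit of $(\mu_N)$; combined with continuity this forces each ghost measure to be singular continuous.

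The main obstacle is the quantitative estimate in (i): counting length-$(N+1)$ words whose associated matrix product realises near-maximal growth and controlling the mass they carry. Without commutativity of $\widehat{\mc A}$ one has to rely on an irreducibility substitute for Perron--Frobenius, plausibly a Berger--Wang-type identification of $\rho^*$ as a Lyapunov exponent on a shift-invariant subsystem or an ergodic-theoretic bound for matrix cocycles, sharp enough to separate the critical regime $\rho^* = \rho/k$ from $\rho^* > \rho/k$.
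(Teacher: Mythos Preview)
Your reduction to the forward implications via the dichotomy is correct and matches the paper exactly. Part~(ii) is also essentially the paper's argument: the key input you gesture at --- that under the no-common-invariant-subspace hypothesis one has the \emph{uniform} bound $\|\widehat{\bf A}_{i_1}\cdots\widehat{\bf A}_{i_n}\| \leqslant c\,(\rho^*)^n$ with no polynomial factor (this is a standard fact, recorded in Jungers' monograph) --- is precisely what gives $\mu_h(I_{\ell,m}) \leqslant C'\,\lambda(I_{\ell,m})$ for all $I_{\ell,m}\in\mc I_k$, hence absolute continuity. Framing this via step-function densities and weak-$\ast$ compactness in $L^{\infty}$ is fine but unnecessary; the paper simply appeals to the monotone class theorem after the interval estimate.

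Part~(i), however, has a genuine gap, and you correctly flag it yourself. The assertion that ``atoms indexed by words outside $W_N(\eps)$ collectively carry only an exponentially small fraction of the total mass'' is not true in general and cannot be rescued by irreducibility alone. The mass of $\mu_N$ typically concentrates on words whose matrix product grows at some intermediate exponential rate strictly between $\bar\rho$ and $\rho^*$; there is no reason for it to sit near the \emph{maximal} rate $\rho^*$. Your pigeonhole count of $|W_N(\eps)|$ is fine, but without control of the complementary mass the diagonal construction gives nothing.

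The paper sidesteps this entirely by working with the Lyapunov exponent $\bar\rho$ rather than $\rho^*$. The crucial observation (Proposition~\ref{prop:sc2}) is that nonnegativity plus irreducibility force $\bar\rho < \rho/k$ strictly whenever $\rho/k < \rho^*$: taking the norm $\|{\bf y}\| = \langle {\bf y}, {\bf x}\rangle$ induced by the Perron eigenvector ${\bf x}$ of $\widehat{\bf A}$, the arithmetic--geometric mean inequality gives $\bar\rho^n \leqslant (\rho/k)^n$ with equality only if all $k^n$ products $\|\widehat{\bf A}_{i_1}\cdots\widehat{\bf A}_{i_n}{\bf y}\|$ coincide, which is impossible once $\rho^* > \rho/k$. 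Then Proposition~\ref{prop:sc} finishes: for $\lambda$-a.e.\ $x$ the product along the coding of $x$ grows like $\bar\rho^n$, so $\mu_h(I_{\ell,m_\ell(x)})/\lambda(I_{\ell,m_\ell(x)}) \to 0$, and Rudin's differentiation theorem for nicely shrinking sets gives singularity. This is the argument you should replace your localisation scheme with.
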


Under appropriate conditions, it is possible to assign to each ghost measure $\mu$ of a sequence $f$ a different $k$-regular sequence $f'$ with convenient properties such that $\mu$ is the \emph{unique} ghost measure for $f'$. Such a sequence $f'$ has a \emph{reduced representation} $({\bf w}, \mathcal{B}, {\bf x})$, introduced in Section \ref{SEC:param}.
In this context, in Section \ref{sec:structure}, we show that the ghost measure of a reduced representation has an infinite convolution structure (Proposition \ref{prop:mainKmu}), which can be thought of as a higher-dimensional analogue of Bernoulli convolutions. This convolution structure is then used in Section \ref{sec:ergspecpur} to show that the ghost measure is spectrally pure. 

\begin{theorem}\label{thm:specpure}
Suppose $f$ has a reduced representation $({\bf w},\mathcal{B},{\bf x})$ such that the spectral radius $\rho$ is the unique simple maximal eigenvalue of $\widehat{\bf B}$. Then the unique ghost measure $\mu_f$ is spectrally pure, that is, 
$(\mu_f)=(\mu_f)_{\bullet}$, for some $\bullet\in \left\{\textnormal{pp,sc,ac}\right\}$. 
\end{theorem}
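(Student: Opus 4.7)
The plan is to exploit the infinite convolution structure of $\mu_f$ supplied by Proposition~\ref{prop:mainKmu}, together with a Jessen--Wintner type purity law. Because $\rho$ is the unique simple maximal eigenvalue of $\widehat{\mathbf{B}}$, the compact abelian group associated with the reduced representation is trivial, so the analogue of Corollary~\ref{cor:unique} for the reduced representation guarantees that $\mu_f$ is indeed the unique ghost measure. By Proposition~\ref{prop:mainKmu}, one can realise $\mu_f$ as an infinite convolution
\[
\mu_f \; = \; \bigast_{n=1}^{\infty} \nu_n
\]
of discrete probability measures $\nu_n$ on $\mathbb{T}$, whose atoms and weights come from the matrix products of the reduced representation at scale $k^{-n}$.

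Given this representation, for every $N \in \N$ I would split $\mu_f = (\nu_1 \ast \cdots \ast \nu_N) \ast \eta_N$, with tail convolution $\eta_N := \bigast_{n > N} \nu_n$. The first factor is purely atomic, so $\mu_f$ is a countable convex combination of translates of $\eta_N$. Since the Lebesgue decomposition on $\mathbb{T}$ is translation-equivariant, the total mass of each component $(\mu_f)_{\bullet}$ coincides with that of $(\eta_N)_{\bullet}$ for each $\bullet \in \{\mathrm{pp},\mathrm{sc},\mathrm{ac}\}$ and every $N$.

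The core step is a Jessen--Wintner style purity argument. Working on the product probability space $\prod_n (\mathrm{supp}\,\nu_n, \nu_n)$, the addition map $(x_n)_n \mapsto \sum_n x_n \pmod 1$ pushes forward to $\mu_f$, and the property of the infinite sum lying in a Borel set of fixed spectral type is invariant under any modification of a finite prefix of $(x_n)_n$: such a modification translates the sum by a single element of $\mathbb{T}$, and each of the pp, sc and ac parts of a Borel measure on $\mathbb{T}$ is preserved under translation. These events are therefore tail events, and Kolmogorov's 0-1 law forces exactly one of the three masses $|(\mu_f)_{\bullet}|$ to equal $1$, which is the claimed spectral purity.

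The main obstacle lies in verifying that the base-$k$ digit choices underlying the $\nu_n$ from Proposition~\ref{prop:mainKmu} correspond to genuinely independent coordinates in the product space, so that Kolmogorov's 0-1 law applies as stated. Because the weights of the $\nu_n$ involve matrix products depending on the digit prefix, one must check that this dependence does not destroy the tail-triviality of the relevant spectral-type events. Once this is confirmed from the structure of the reduced representation developed in Section~\ref{SEC:param}, the dichotomy yields $(\mu_f) = (\mu_f)_{\bullet}$ for a unique $\bullet \in \{\mathrm{pp},\mathrm{sc},\mathrm{ac}\}$.
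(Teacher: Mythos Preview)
Your proposal contains a genuine gap at its first substantive step. Proposition~\ref{prop:mainKmu} does \emph{not} express $\mu_f$ as a scalar infinite convolution $\bigast_{n}\nu_n$ of discrete probability measures on $\TT$. The factors $\widehat{\bf B}(\delta_{1/k^n(k-1)})$ are \emph{matrix}-valued measures, and the scalar measure $\mu_f$ is obtained only after sandwiching the matrix product between ${\bf w}^T({\bf B}-{\bf B}_0)(\delta_{1/(k-1)})$ and ${\bf x}$. Consequently, after cutting at level $N$, what multiplies the tail is not a single scalar weight but the matrix $\rho^{-N}\widehat{\bf B}_{i_1}\cdots\widehat{\bf B}_{i_N}$, which varies with the digit prefix. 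Hence there is no single tail measure $\eta_N$ for which $\mu_f$ is a countable convex combination of translates of $\eta_N$: the ``tail'' seen after a prefix $(i_1,\ldots,i_N)$ is ${\bf w}^T({\bf B}-{\bf B}_0)(\delta_{1/(k-1)})\,\widehat{\bf B}_{i_1}\cdots\widehat{\bf B}_{i_N}\,\boldsymbol{\mu}_{>N}\,{\bf x}$, which depends on the prefix through the noncommuting matrix product. For the same reason there is no product probability space with independent coordinates pushing forward to $\mu_f$, so Kolmogorov's 0--1 law does not apply. You identify this obstacle yourself in your final paragraph, but you do not resolve it; in fact it cannot be resolved along these lines unless the matrices happen to be $1\times 1$ (the Salem case), where your argument would reduce to the classical Jessen--Wintner theorem.

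Note also that your argument never uses the simplicity of $\rho$ in any essential way (uniqueness of the ghost measure already follows from $\rho$ being the unique eigenvalue of maximal modulus, which is part of the definition of a reduced representation). The paper's proof uses simplicity precisely to handle the matrix structure: it introduces a linear operator $K$ on vector-valued measures, shows that $K$ preserves each spectral type, and then uses the simplicity of $\rho$ to conclude that the fixed-point space of $K$ is one-dimensional, spanned by $\boldsymbol{\mu}\,{\bf x}$. Since each spectral component $(\boldsymbol{\mu}\,{\bf x})_\bullet$ is itself $K$-invariant, only one of them can be nonzero. This is the substitute for the tail $\sigma$-algebra argument in the genuinely matrix-valued setting.
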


\noindent Note that, in general, spectral purity does not hold, even in the simple setting of nonnegative matrices; see Example \ref{EX:pp+ac}.

In Section \ref{sec:rational}, we study more closely the pure point parts $(\mu_f)_{\textnormal{pp}}$ of measures arising from regular sequences $f$ whose associated set $\widetilde{\mathcal{A}}$ contains only nonnegative matrices. Using a graph-theoretic method inspired by the understanding of the evolution of a Markov process, we show that any pure point of $\mu_f$ must be a rational number. In particular, we have the following result.

\begin{theorem}\label{thm:mainrational} Let $f$ be a nonnegative real-valued $k$-regular sequence with canonical linear representation $f=({\bf u},\mathcal{A},{\bf v})$ and assume that all $\widetilde{\bf A}_i$ are nonnegative matrices and that ${\bf v}$ is nonnegative. Then, the pure point part of every ghost measure is supported on $\mathbb{Q}.$
\end{theorem}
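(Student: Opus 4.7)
My plan is to reduce the problem to a rigidity statement about which infinite base-$k$ digit sequences can persistently carry atom mass through the matrix dynamics, and then to show that nonnegativity combined with Perron--Frobenius theory forces such sequences to be eventually periodic.

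By Theorem~\ref{thm:probnonneg} it suffices to fix $j \in \{0, \ldots, p-1\}$, set $\mu := \mu_{f,j} = \lim_{n \to \infty} \mu_{pn+j}$, and prove that every atom of $\mu$ lies in $\mathbb{Q}$. Suppose $x \in [0,1)$ satisfies $\mu(\{x\}) = c > 0$. The parametrisation $m \mapsto m/(k^N(k-1))$ associates with $x$ a distinguished digit sequence $(\hat{n}_\ell)_{\ell \geq 0}$ (with $\hat{n}_0 \in \{1, \ldots, k-1\}$ and $\hat{n}_\ell \in \{0, \ldots, k-1\}$ for $\ell \geq 1$) in such a way that $x \in \mathbb{Q}$ if and only if $(\hat{n}_\ell)$ is eventually periodic. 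Writing $S_{N,L}$ for the set of $n \in [k^N, k^{N+1})$ whose top $L$ base-$k$ digits match $(\hat{n}_0, \ldots, \hat{n}_{L-1})$, the canonical representation and summing over the unconstrained low digits yield
\[
\sum_{n \in S_{N,L}} f(n) \; = \; {\bf u}^T {\bf A}_{\hat{n}_0} {\bf A}_{\hat{n}_1} \cdots {\bf A}_{\hat{n}_{L-1}} {\bf A}^{N-L+1} {\bf v}.
\]
Combining weak convergence along $N = pn+j$ with a Perron-type asymptotic for $\Sigma_f(N)$ and ${\bf A}^{N-L+1}{\bf v}$, the atom condition becomes a lower bound of the shape
\[
{\bf u}^T \widetilde{\bf A}_{\hat{n}_0} \widetilde{\bf A}_{\hat{n}_1} \cdots \widetilde{\bf A}_{\hat{n}_{L-1}} \boldsymbol{\pi} \; \geq \; c' \rho^L \quad \text{for all } L,
\]
where $\rho = \rho(\widetilde{\bf A})$ and $\boldsymbol{\pi}$ is a nonnegative eigenvector attached to the $j$-th peripheral component of $\widetilde{\bf A}$.

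The next step is to pass to a Markov-type object via a Doob-style $\boldsymbol{\pi}$-transform: set $Q_i(a,b) := \widetilde{\bf A}_i(a,b)\, \boldsymbol{\pi}(b)/(\rho\, \boldsymbol{\pi}(a))$ on the support of $\boldsymbol{\pi}$. Since $\widetilde{\bf A}\boldsymbol{\pi} = \rho\boldsymbol{\pi}$, the matrix $\sum_i Q_i$ is row-stochastic, so $(Q_0, \ldots, Q_{k-1})$ is a labelled Markov transition scheme on a finite state space. Setting
\[
w_L(a) := \big({\bf u}^T \widetilde{\bf A}_{\hat{n}_0} \cdots \widetilde{\bf A}_{\hat{n}_{L-1}}\big)(a)\,\boldsymbol{\pi}(a)/\rho^L,
\]
the recursion $w_L = w_{L-1} Q_{\hat{n}_{L-1}}$ shows that $\sum_a w_L(a)$ is non-increasing, and the atom condition forces it to converge to a positive limit; in particular the per-step mass loss tends to zero.

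Finally, I would encode the asymptotically admissible dynamics in a finite labelled directed graph $\mathcal{G}$ whose vertices are the projective states of the $w_L$ (finitely many, grouped e.g.\ by their supports or by a further Perron--Frobenius reduction) and whose labelled edges correspond to $Q_i$-transitions preserving total mass. Finiteness of $\mathcal{G}$ together with the vanishing per-step loss imply that the tail of $(\hat{n}_\ell)$ settles inside a strongly connected recurrent component. The main obstacle I anticipate is the final step: showing that within such a component the only labelled paths compatible with mass preservation are eventually periodic. I expect this to follow from a rigidity argument leveraging the uniqueness-up-to-scale of the Perron eigenvector of $\sum_i Q_i$ on each recurrent class, together with the saturation of row-stochasticity at each step, which should pin down the admissible outgoing label uniquely at each vertex. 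Once each vertex has out-degree at most one among valid labels, the tail of $(\hat{n}_\ell)$ becomes an infinite path in a finite functional graph and is therefore eventually periodic, giving $x \in \mathbb{Q}$.
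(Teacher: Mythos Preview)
Your reduction is essentially the paper's: the atom condition at $x$ is unpacked via Corollary~\ref{COR:point-mass-sequence} into a persistence estimate for $\rho^{-L}{\bf u}^T\widetilde{\bf A}_{\hat n_0}\cdots\widetilde{\bf A}_{\hat n_{L-1}}\boldsymbol{\pi}$ with $\boldsymbol{\pi}$ a nonnegative $\rho$-eigenvector, and your Doob transform is a pleasant repackaging of the same bookkeeping (nonnegativity of $\boldsymbol{\pi}$ together with $\widetilde{\bf A}\boldsymbol{\pi}=\rho\boldsymbol{\pi}$ guarantees that no mass leaks in from outside $\operatorname{supp}\boldsymbol{\pi}$, so the sub-stochastic recursion $w_L=w_{L-1}Q_{\hat n_{L-1}}$ is exact). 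Up to this point you and the paper are aligned; your restriction to $\operatorname{supp}\boldsymbol{\pi}$ plays the role of the paper's Lemma~\ref{LEM:backward closure-support}.

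The gap is precisely where you flag it. The projective states of the $w_L$ form a continuum in general, so there is no finite vertex set to speak of; and grouping by supports, while finite, is too coarse to force out-degree one. Concretely, a state $a$ can have several labels $i$ with $\sum_b Q_i(a,b)=1$ (whenever several $\widetilde{\bf A}_i$ already saturate the row of $\widetilde{\bf A}$ at $a$ against $\boldsymbol{\pi}$), and conversely the ``asymptotically lossless'' condition only says that $w_{L-1}$ is \emph{mostly} supported on lossless states for the next label, not entirely; nothing in your sketch forces the support to stabilise or prevents $w_L$ from being distributed over several terminal components whose lossless labels disagree. The paper's counterexample preceding Proposition~\ref{PROP:aperiodic-products} shows that, without an extra structural restriction, aperiodic products can indeed attain the maximal exponential rate, so this step cannot be soft.

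What the paper does instead is pass to a power $m$ so that $\widetilde{\bf A}^m$ is dominant positive, and then prove a dichotomy on each dominant block (Lemma~\ref{LEM:block-dichotomy}): either a single product in $\widetilde{\mc A}^m$ equals the full block (\emph{pure}), or every product sits uniformly below it (\emph{mixed}). Mixed blocks are absorbed into a strictly smaller ``reduced dominator'', and the support restriction via Lemma~\ref{LEM:backward closure-support} eliminates paths between distinct pure dominant blocks. The eventual-periodicity conclusion is then obtained not by a functional-graph argument but by a block-triangular combinatorial estimate (Proposition~\ref{PROP:aperiodic-products}): along an aperiodic product there are infinitely many positions where consecutive length-$m$ blocks differ, and at each such position the product cannot stay inside any single pure dominant block, forcing a uniform loss. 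This is the missing rigidity mechanism your sketch alludes to but does not supply.
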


\noindent A finite set of matrices $ \mc A = \{{\bf A}_0,\ldots,{\bf A}_{k-1}\}$ is said to satisfy the {\em finiteness property} provided there is a finite product ${\bf A}_{i_0}\cdots {\bf A}_{i_{m-1}}$ of these matrices such that $$\rho({\bf A}_{i_0}\cdots {\bf A}_{i_{m-1}})^{1/m}=\rho^*(\mc A).$$ Arising from the work of Daubechies and Lagarias \cite{DL1991,DL1992}, Lagarias and Wang \cite{LW1995} conjectured that the finiteness property holds for all finite sets of real matrices. This was shown to be false, in general, first by Bousch and Mairesse \cite{BM}, then constructively by Hare, Morris, Sidorov and Theys \cite{HMST}. The finiteness conjecture for rational matrices---equivalent to that for integer matrices---remains open. The methods used for Theorem~\ref{thm:mainrational} establishes a new case of the finiteness conjecture.

\begin{corollary}\label{cor:fc} Let $\mathcal{A}$ be a finite set of $d\times d$ nonnegative matrices with $\rho^*(\mathcal{A})=\rho({\bf A})$. Then $\mathcal{A}$ has the finiteness property.
\end{corollary}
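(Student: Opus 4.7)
The plan is to bridge matrix semigroup theory and the spectral theory of ghost measures by realising the given matrix set $\mathcal A$ as the representation of an explicit regular sequence, and then to extract the periodic product from a rational atom produced by Theorem~\ref{thm:mainrational}.

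First, I would construct a suitable nonnegative $k$-regular sequence from $\mathcal A$. Let $\mathbf u$ be a nonnegative left Perron eigenvector of $\mathbf A$ and let $\mathbf v$ be a nonnegative right eigenvector, normalised so that $\mathbf u^T \mathbf v = 1$. Define $f(n) = \mathbf u^T \mathbf A_{(n)_k} \mathbf v$, which is nonnegative because all $\mathbf A_j$ and both $\mathbf u,\mathbf v$ are nonnegative. A minor technical step is ensuring the canonical form condition $\mathbf u^T = \mathbf u^T \mathbf A_0$; this can be arranged by passing to a canonical representation via Lemma~\ref{LEM:minimal-basis-chage}, which preserves the matrix set $\widetilde{\mathcal A}$ up to basis change and hence preserves both $\rho^*$ and $\rho$. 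After this reduction, the hypotheses of Theorem~\ref{thm:mainrational} apply.

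Second, I would show that the equality $\rho^*(\mathcal A) = \rho(\mathbf A)$ forces any ghost measure $\mu$ of $f$ to have a nontrivial pure point part. Indeed, Theorem~\ref{PROP:continuity-condition} characterises continuity of all ghost measures by the strict inequality $\rho^*(\widehat{\mathcal A}) < \rho(\widehat{\mathbf A})$; in the boundary regime $\rho^* = \rho$ the reduction from $\widetilde{\mathcal A}$ to $\widehat{\mathcal A}$ cannot strictly decrease the joint spectral radius, so continuity fails and $\mu_{\mathrm{pp}} \neq 0$. Then Theorem~\ref{thm:mainrational} supplies an atom of $\mu$ at some $\alpha = p/q \in \mathbb Q \cap [0,1)$. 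Write the base-$k$ expansion $(\alpha)_k = 0.a_1 a_2 \cdots a_\ell \overline{a_{\ell+1}\cdots a_{\ell+m}}$ and set
\[
\mathbf B \; := \; \mathbf A_{a_{\ell+m}} \mathbf A_{a_{\ell+m-1}} \cdots \mathbf A_{a_{\ell+1}}.
\]

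Third, I would convert the positive atom mass at $\alpha$ into the equality $\rho(\mathbf B)^{1/m} = \rho^*(\mathcal A)$. By definition of $\mu_N$, the mass that the approximants accumulate near $\alpha$ is comparable to a sum of values $f(n)/\Sigma_f(N)$ over integers $n \in [k^N,k^{N+1})$ whose leading base-$k$ digits follow $(\alpha)_k$. After the preperiod, such $n$ have matrix products of the form $\mathbf A_{(\text{preperiod})} \mathbf B^{r} \mathbf A_{(\text{tail})}$ with $r \approx N/m$, whose norms grow like $\rho(\mathbf B)^{N/m}$ up to constants. On the other hand, $\Sigma_f(N)$ grows like $\rho(\mathbf A)^N = \rho^N$, since $\mathbf u^T \mathbf A = \rho \mathbf u^T$. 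For the ratio to remain bounded below by a positive constant as $N \to \infty$, which is exactly what a persistent atom requires, one must have $\rho(\mathbf B)^{1/m} \geqslant \rho$. Combined with the universal bound $\rho(\mathbf B)^{1/m} \leqslant \rho^*(\mathcal A) = \rho$, equality holds and $\mathbf B$ witnesses the finiteness property.

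I expect the main obstacle to be step two, namely ruling out that $\mu$ is purely continuous in the boundary regime $\rho^* = \rho$, and step three, where one must control how much mass truly localises at $\alpha$ rather than spreading over a sequence of nearby dyadic rationals. The first should follow by a careful reading of Theorem~\ref{PROP:continuity-condition} together with the behaviour of $\widehat{\mathcal A}$ under the reduction; the second will likely rely on the graph-theoretic machinery developed for Theorem~\ref{thm:mainrational} in Section~\ref{sec:rational}, which identifies the recurrent strongly connected components governing atoms of $\mu_{\mathrm{pp}}$ and naturally produces a periodic product along which growth is maximal.
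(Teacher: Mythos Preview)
Your approach inverts the paper's logic. In Section~\ref{sec:rational} the corollary is proved \emph{directly} and \emph{before} Theorem~\ref{thm:mainrational}, by a short graph-theoretic argument: one decomposes the graph $G(\widetilde{\mc A})$ into strongly connected components, passes to a power $m$ so that each dominant block is strictly positive, and then applies the dichotomy in Lemma~\ref{LEM:block-dichotomy}. Either some ${\bf D}\in\widetilde{\mc A}^m$ already equals a full dominant block ${\bf C}_{ii}^{(m)}$ (whence $\rho({\bf D})=\rho^m$ and finiteness is witnessed), or every such restriction is bounded by $c\,{\bf C}_{ii}^{(m)}$ with $c<1$, which forces $\rho^*(\widetilde{\mc A})<\rho(\widetilde{\bf A})$ via the reduced dominator. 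No ghost measures enter at all; Theorem~\ref{thm:mainrational} is a \emph{subsequent} application of this machinery, not a prerequisite.

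Beyond being indirect, your route has a genuine gap at step two. You assert that ``the reduction from $\widetilde{\mc A}$ to $\widehat{\mc A}$ cannot strictly decrease the joint spectral radius'', but restriction to an invariant subspace can absolutely drop $\rho^*$; all one knows is $\rho^*(\widehat{\mc A})\leqslant\rho^*(\widetilde{\mc A})$ and $\rho(\widehat{\bf A})=\rho(\widetilde{\bf A})$, so $\rho^*(\widetilde{\mc A})=\rho(\widetilde{\bf A})$ does not by itself yield $\rho^*(\widehat{\mc A})=\rho(\widehat{\bf A})$. If this fails, Theorem~\ref{PROP:continuity-condition} gives you only continuous ghost measures and the argument collapses. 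There is a second difficulty in step one: passing to a canonical (minimal) representation via Lemma~\ref{LEM:minimal-basis-chage} is a change of basis that need not preserve nonnegativity of the matrices, so the hypothesis ``all $\widetilde{\bf A}_i$ are nonnegative'' required by Theorem~\ref{thm:mainrational} may be lost; conversely, if you keep the original (possibly non-minimal) representation, the relevant subspaces $V$ and $\widehat V$ may be strictly smaller than $\R^d$ and you again cannot control $\rho^*(\widehat{\mc A})$ in terms of the given $\mc A$. The paper sidesteps all of this by proving the finiteness statement for an arbitrary finite set of nonnegative matrices directly, without ever embedding it in a regular sequence.
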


\noindent In fact, in order to prove Theorem \ref{thm:mainrational}, we say much more than the statement in Corollary \ref{cor:fc}. Not only is there one maximal periodic sequence, but all of the maximal sequences are periodic.%\\

%\noindent {\sc Acknowledgements.} We wish to thank...

%%%%%%%%%%%%%%%%%%%%%%%%%%%%%%%%%%%%%%%%%%%%%%
\section{Basic properties}\label{sec:prop}
%%%%%%%%%%%%%%%%%%%%%%%%%%%%%%%%%%%%%%%%%%%%%%

In this section, we collect some basic properties of $k$-regular sequences and their (minimal) linear representations.
First, we give explicit relationships between bases of $V_k(f)$ and linear representations. In particular, we show that one may go between any two minimal degree linear representations by a change of basis. 

\begin{lemma}\label{lem:basrep} Let $f$ be a $k$-regular sequence of degree $d$. Then any $d\times d$ linear representation of $f$ comes from a basis for $V_k(f)$. In particular, if $({\bf u},\mathcal{A},{\bf v})$ is a $d\times d$ linear representation of $f$, then  the $d$ sequences $g_i(n):={\bf u}^T {\bf A}_{(n)_k}{\bf e}_i,$ with $i=1,\ldots,d$ form a basis for $V_k(f)$. 
\end{lemma}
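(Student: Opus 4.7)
The plan is to prove the ``in particular'' clause, after which the main assertion follows at once by observing that the linear representation $({\bf u},\mathcal{A},{\bf v})$ is exactly the one induced by the basis $\{g_1,\ldots,g_d\}$ of $V_k(f)$. The overall strategy is a dimension count: I will exhibit a subspace $W \subseteq \mathbb{R}^{\mathbb{N}_0}$ that contains $V_k(f)$, is spanned by the $g_i$, and has dimension at most $d$. Since $\dim V_k(f) = d$ by hypothesis, equality is forced everywhere and the $g_i$ are a basis.

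In detail, consider the $\mathbb{R}$-linear map
\[
T \colon \mathbb{R}^d \to \mathbb{R}^{\mathbb{N}_0}, \qquad T({\bf w})(n) := {\bf u}^T {\bf A}_{(n)_k} {\bf w},
\]
and set $W := T(\mathbb{R}^d)$. Then $W$ is spanned by $g_i = T({\bf e}_i)$, so $\dim W \leqslant d$. Since $f = T({\bf v})$ by the definition of a linear representation, we have $f \in W$. For a general element $\sigma_{\ell,r}(n) := f(k^\ell n + r)$ of $\ker_k(f)$, the digit-string identity $(k^\ell n + r)_k = (n)_k \, r_{\ell-1} \cdots r_0$ (with $r = \sum_{j=0}^{\ell-1} r_j k^j$, padded with leading zeros as needed) yields the factorisation
\[
{\bf A}_{(k^\ell n + r)_k} \, = \, {\bf A}_{(n)_k} \, {\bf A}_{r_{\ell-1}} \cdots {\bf A}_{r_0},
\]
so that $\sigma_{\ell,r} = T({\bf w}_{\ell,r})$ with ${\bf w}_{\ell,r} := {\bf A}_{r_{\ell-1}} \cdots {\bf A}_{r_0} {\bf v}$. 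Thus every generator of $\ker_k(f)$ lies in $W$, and consequently $V_k(f) = \operatorname{span}(\ker_k(f)) \subseteq W$.

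Chaining the inequalities, $d = \dim V_k(f) \leqslant \dim W \leqslant d$, so all are equalities: $V_k(f) = W$, and the $d$ spanning sequences $g_1,\ldots,g_d$ must be linearly independent and therefore form a basis of $V_k(f)$. To recover the first assertion, observe that the digit-recursion ${\bf A}_{(kn+j)_k} = {\bf A}_{(n)_k} {\bf A}_j$ gives $g_i(kn + j) = \sum_l ({\bf A}_j)_{li} \, g_l(n)$, identifying the matrices $\mathcal{A}$ as the transition matrices associated with the basis $\{g_1,\ldots,g_d\}$, and $f = T({\bf v}) = \sum_l v_l g_l$ identifies ${\bf v}$ as the coordinate vector of $f$ in this basis.

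The main technical nuisance will be the edge case $n = 0$ in the factorisation of ${\bf A}_{(k^\ell n + r)_k}$, since the natural digit string of $r$ may be shorter than $\ell$ digits. This is the only place where the precise convention for $(0)_k$ in a linear representation---or equivalently, the compatibility condition ensuring $f(n) = {\bf u}^T {\bf A}_{(n)_k} {\bf v}$ holds for all $n \geqslant 0$, including when leading zeros are inserted---enters the argument. Once this is cleanly addressed (by invoking the defining property of a linear representation from \cite[Lemma~4.1]{AS1992}), the dimension-counting argument closes the proof without further calculation.
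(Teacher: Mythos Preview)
Your proof is correct and follows essentially the same route as the paper: define the span of the $g_i$ (the paper calls it $U$, you realise it as the image of a linear map $T$), show via the digit-string factorisation that every element of $\ker_k(f)$ lies in this span, and close by the dimension count $d = \dim V_k(f) \leqslant \dim U \leqslant d$. The paper writes the padding as ${\bf A}_0^{\ell - {\rm len}(r)} {\bf A}_{(r)_k}$ rather than your ${\bf A}_{r_{\ell-1}} \cdots {\bf A}_{r_0}$, and does not explicitly flag the $n=0$ edge case, but these are purely cosmetic differences.
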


\begin{proof} Let $U$ be the $\mathbb{R}$-vector space generated by $g_1,\ldots,g_d$. So $\dim_{\mathbb{R}}U\leqslant d$. Now, given an $\ell\geqslant 0$ and an $r$ with $0\leqslant r<k^\ell$, we have that $$f(k^\ell n+r)={\bf u}^T {\bf A}_{(k^\ell n+r)_k}{\bf v}={\bf u}^T {\bf A}_{(n)_k}{\bf A}_0^{\ell-{\rm len}(r)}{\bf A}_{(r)_k}{\bf v}\in U,$$ where ${\rm len}(r)$ is the length of $r$ in the base $k$. Thus, every element of ${\rm ker}_k(f)$ is an element of $U$. So $V_k(f)$ is a subspace of $U$. But $d=\dim_\mathbb{R}V_k(f)\leqslant \dim_\mathbb{R}U\leqslant d.$ So $\dim_\mathbb{R}U=d$. Thus $V_k(f)=U$ and $g_1,\ldots,g_d$ form a basis for $V_k(f)$.
\end{proof}

\begin{lemma}
\label{LEM:u-cyclic-span}
Let $({\bf u},\mathcal{A},{\bf v})$ be a linear representation of minimal degree $d$. Then $\{{\bf u}^T {\bf A}_{(n)_k} \}_{n \in \N_0}$ spans $\R^d$.
\end{lemma}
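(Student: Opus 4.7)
The plan is to prove the contrapositive using Lemma~\ref{lem:basrep}. Since the representation has minimal degree $d$, we have $\dim V_k(f) = d$, so Lemma~\ref{lem:basrep} applies and tells us that the sequences $g_i(n) := {\bf u}^T {\bf A}_{(n)_k} {\bf e}_i$, for $i = 1, \ldots, d$, form a basis of $V_k(f)$. In particular, $g_1, \ldots, g_d$ are $\R$-linearly independent as sequences.

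Now I would suppose for contradiction that $W := \operatorname{span}\{{\bf u}^T {\bf A}_{(n)_k} : n \in \N_0\}$ is a proper subspace of $\R^{1 \times d}$. Then its annihilator is nontrivial, so there exists a nonzero column vector ${\bf w} = \sum_{i=1}^d w_i {\bf e}_i \in \R^d$ such that ${\bf u}^T {\bf A}_{(n)_k} {\bf w} = 0$ for every $n \in \N_0$. Expanding,
\[
0 \,=\, {\bf u}^T {\bf A}_{(n)_k} {\bf w} \,=\, \sum_{i=1}^d w_i \, {\bf u}^T {\bf A}_{(n)_k} {\bf e}_i \,=\, \sum_{i=1}^d w_i \, g_i(n),
\]
valid for all $n \in \N_0$. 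This is a nontrivial $\R$-linear relation among $g_1, \ldots, g_d$, contradicting their linear independence. Hence $W = \R^{1 \times d}$.

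The main obstacle is really just bookkeeping between row and column vectors, and making sure Lemma~\ref{lem:basrep} is applicable. That application is valid because the hypothesis of minimality matches exactly the requirement that the representation be of size $d = \dim V_k(f)$. No further machinery is needed; the result is essentially a restatement of linear independence of the coordinate sequences associated with the chosen basis.
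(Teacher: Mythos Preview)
Your proof is correct and follows essentially the same approach as the paper's own proof: both invoke Lemma~\ref{lem:basrep} to obtain linear independence of the coordinate sequences $g_i$, then derive a contradiction from a nonzero annihilating vector ${\bf w}$. The only difference is that you spell out the expansion $\sum_i w_i g_i(n)$ explicitly, whereas the paper writes it compactly as $(g_1,\ldots,g_d){\bf w} = 0$.
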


\begin{proof}
By Lemma~\ref{lem:basrep}, the sequences $g_i$ with $g_i(n) = {\bf u}^T {\bf A}_{(n)_k} {\bf e}_i$ are linearly independent.
If $\{{\bf u}^T {\bf A}_{(n)_k} \}_{n \in \N_0}$ do not span the whole space, there exists a vector ${\bf w} \in \R^d$ such that ${\bf u}^T {\bf A}_{(n)_k} {\bf w} = 0$ for all $n \in \N_0$. But this would also imply that $(g_1,\ldots, g_d){\bf w} = 0$ in contradiction to the linear independence of the $g_i$. 
\end{proof}

\begin{lemma}
\label{LEM:minimal-basis-chage}
Up to a change of basis, there is only one minimal linear representation of a regular sequences. That is, if $({\bf u}_a,\mathcal{A},{\bf v}_a)$ and $({\bf u}_c,\mathcal{C},{\bf v}_c)$ are two $d\times d$ linear representations of a $k$-regular sequence $f$, then there is an invertible matrix ${\bf M}$ such that ${\bf u}_a^T {\bf M} = {\bf u}_c^T $, ${\bf M}^{-1} {\bf v}_a = {\bf v}_c $ and ${\bf M}^{-1}{\bf A}_i{\bf M}={\bf C}_i$ for $i\in\{0,\ldots,k-1\}$.
\end{lemma}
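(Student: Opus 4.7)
The plan is to construct the change-of-basis matrix ${\bf M}$ by comparing the two canonical linear maps from $\R^d$ onto $V_k(f)$ induced by the two representations. Explicitly, I would define
\[
T_a({\bf x})(n) := {\bf u}_a^T {\bf A}_{(n)_k} {\bf x}, \qquad T_c({\bf y})(n) := {\bf u}_c^T {\bf C}_{(n)_k} {\bf y},
\]
with the empty-product convention ${\bf A}_{(0)_k} = {\bf C}_{(0)_k} = I$. By Lemma~\ref{lem:basrep}, the images $T_a(\R^d)$ and $T_c(\R^d)$ both equal $V_k(f)$, and since $\dim_\R V_k(f) = d$ by minimality, both maps are linear isomorphisms. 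I would then set ${\bf M} := T_a^{-1} T_c$, which is invertible by construction.

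The defining relation $T_a({\bf M}{\bf y}) = T_c({\bf y})$ unpacks to the pointwise identity
\[
{\bf u}_a^T {\bf A}_{(n)_k} {\bf M} = {\bf u}_c^T {\bf C}_{(n)_k} \qquad \text{for all } n \in \N_0.
\]
Two of the three desired conclusions follow immediately. Evaluating at $n = 0$ gives ${\bf u}_a^T {\bf M} = {\bf u}_c^T$. For the ${\bf v}$-identity, both $T_a({\bf v}_a)$ and $T_c({\bf v}_c)$ equal the sequence $f$, so $T_a({\bf v}_a) = T_c({\bf v}_c) = T_a({\bf M}{\bf v}_c)$, and injectivity of $T_a$ forces ${\bf v}_a = {\bf M}{\bf v}_c$, i.e., ${\bf M}^{-1}{\bf v}_a = {\bf v}_c$.

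The remaining, and I expect most delicate, step is the matrix similarity ${\bf M}^{-1} {\bf A}_i {\bf M} = {\bf C}_i$ for every $i \in \{0,\ldots,k-1\}$. To get at this, I would specialise the above identity to $n = mk + i$; the base-$k$ expansions give the factorisations ${\bf A}_{(mk+i)_k} = {\bf A}_{(m)_k} {\bf A}_i$ and ${\bf C}_{(mk+i)_k} = {\bf C}_{(m)_k} {\bf C}_i$, and substituting the identity at $m$ into the right-hand side yields
\[
{\bf u}_a^T {\bf A}_{(m)_k} \bigl( {\bf A}_i {\bf M} - {\bf M} {\bf C}_i \bigr) = 0 \qquad \text{for all } m \in \N_0.
\]
The main obstacle is then passing from this family of row-vector constraints to the matrix equation ${\bf A}_i {\bf M} = {\bf M}{\bf C}_i$. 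For this I would invoke Lemma~\ref{LEM:u-cyclic-span}, which asserts that the row vectors $\{{\bf u}_a^T {\bf A}_{(m)_k}\}_{m \in \N_0}$ span $\R^d$; any matrix annihilated from the left by a spanning family of row vectors must be the zero matrix, yielding the desired intertwining for each $i$ and completing the proof.
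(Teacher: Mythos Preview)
Your proof is correct and follows essentially the same approach as the paper's. Both arguments use Lemma~\ref{lem:basrep} to identify the two representations with bases of $V_k(f)$, obtain the key identity ${\bf u}_a^T {\bf A}_{(n)_k} {\bf M} = {\bf u}_c^T {\bf C}_{(n)_k}$, and then invoke Lemma~\ref{LEM:u-cyclic-span} to pass from the row-vector annihilation to the matrix intertwining; your explicit naming of the isomorphisms $T_a, T_c$ and the definition ${\bf M} = T_a^{-1}T_c$ is simply a cleaner packaging of the paper's change-of-basis step.
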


\begin{proof}
For each $i\in\{1,\ldots,d\},$ set $a_i:={\bf u}_a^T {\bf A}_{(n)_k}{\bf e}_i$ and $c_i:={\bf u}_c^T {\bf C}_{(n)_k}{\bf e}_i$. Applying Lemma~\ref{lem:basrep}, each of $\{a_1,\ldots,a_d\}$ and $\{c_1,\ldots,c_d\}$ are bases for $V_k(f)$, so there is an invertible matrix ${\bf M}$ such that $(a_1(n),\ldots,a_d(n)){\bf M}=(c_1(n),\ldots,c_d(n))$ for all $n$. That is, for each $n$, 
$
{\bf u}_a^T {\bf A}_{(n)_k}{\bf M}={\bf u}_c^T {\bf C}_{(n)_k},
$
and in particular  ${\bf u}_a^T {\bf M} = {\bf u}_c^T $.
For every $i\in\{0,1,\ldots,k-1\}$ and $n \in \N_0$ we obtain
\[
{\bf u}_a^T{\bf A}_{(n)_k}{\bf M}({\bf M}^{-1}{\bf A}_i {\bf M})={\bf u}_a^T{\bf A}_{(kn+i)_k} {\bf M}={\bf u}_c^T{\bf C}_{(kn+i)_k}={\bf u}_c^T{\bf C}_{(n)_k}{\bf C}_i,
\]
 so that ${\bf u}_c^T{\bf C}_{(n)_k}({\bf M}^{-1}{\bf A}_i {\bf M}-{\bf C}_i)=0.$  Since the degree $d$ is minimal, we observe that ${\rm span}_{\mathbb{R}}\{{\bf u}_c^T{\bf C}_{(n)_k}\}=\mathbb{R}^{1\times d}$ and it follows that ${\bf M}^{-1}{\bf A}_i {\bf M}={\bf C}_i$ holds on the whole space $\mathbb{R}^{1\times d}$. Similarly, ${\bf u}_c^T {\bf C}_{(n)_k} {\bf v}_c = {\bf u}_a^T {\bf A}_{(n)_k} {\bf v}_a = {\bf u}_c^T {\bf C}_{(n)_k} {\bf M}^{-1} {\bf v}_a$ for every $n \in \N_0$ implies ${\bf M}^{-1} {\bf v}_a = {\bf v}_c $.
\end{proof}

\begin{lemma}
\label{LEM:v-cyclic-span}
Let $({\bf u},\mathcal{A},{\bf v})$ be a linear representation of minimal degree $d$. Then $\{ {\bf A}_0^m {\bf A}_{(n)_k} {\bf v} \}_{n,m \in \N_0}$ spans $\R^d$.
\end{lemma}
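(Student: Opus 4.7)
The plan is to show that $U := \Span\{{\bf A}_0^m {\bf A}_{(n)_k} {\bf v} : n, m \in \N_0\}$ must equal $\R^d$ by a restriction argument dual to the proof of Lemma~\ref{LEM:u-cyclic-span}: if $U$ were a proper subspace, restricting the representation $({\bf u},\mathcal{A},{\bf v})$ to $U$ would yield a linear representation of $f$ of degree strictly less than $d$, contradicting minimality.

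First I would verify that $U$ is $\mathcal{A}$-invariant and contains ${\bf v}$. Invariance follows by checking generators: for $j = 0$ the identity ${\bf A}_0 \cdot {\bf A}_0^m {\bf A}_{(n)_k} {\bf v} = {\bf A}_0^{m+1} {\bf A}_{(n)_k} {\bf v}$ is immediate, and for $j \neq 0$ the concatenated digit string $j\,0^m\,(n)_k$ is itself the base-$k$ expansion of the integer $N = j\,k^{m + {\rm len}(n)} + n$, so ${\bf A}_j \cdot {\bf A}_0^m {\bf A}_{(n)_k} {\bf v} = {\bf A}_{(N)_k} {\bf v}$, again a generator. Taking $m = n = 0$ with the convention ${\bf A}_{(0)_k} = I$ shows ${\bf v} \in U$.

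Next I would assume for contradiction that $r := \dim U < d$, choose a basis ${\bf w}_1,\ldots,{\bf w}_r$ of $U$, and let ${\bf M} \in \R^{d \times r}$ be the matrix with these columns. The $\mathcal{A}$-invariance of $U$ provides matrices $\tilde{\bf A}_j \in \R^{r \times r}$ with ${\bf A}_j {\bf M} = {\bf M} \tilde{\bf A}_j$, and iterating yields ${\bf A}_{(n)_k} {\bf M} = {\bf M} \tilde{\bf A}_{(n)_k}$ for every $n$. Writing ${\bf v} = {\bf M} \tilde{\bf v}$ and setting $\tilde{\bf u}^T := {\bf u}^T {\bf M}$, I would then compute
\[
f(n) \, = \, {\bf u}^T {\bf A}_{(n)_k} {\bf M} \tilde{\bf v} \, = \, {\bf u}^T {\bf M} \tilde{\bf A}_{(n)_k} \tilde{\bf v} \, = \, \tilde{\bf u}^T \tilde{\bf A}_{(n)_k} \tilde{\bf v},
\]
exhibiting $f$ as an $r$-dimensional linear representation and contradicting the minimality of $d$; hence $U = \R^d$. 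The main obstacle is only the bookkeeping in the digit-string manipulation for the boundary cases $n = 0$ and $m = 0$, which is cosmetic rather than substantive.
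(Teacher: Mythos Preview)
Your argument is correct and takes a genuinely different route from the paper's. The paper first invokes Lemma~\ref{LEM:minimal-basis-chage} to reduce to a \emph{canonical} representation, then uses the explicit structure of the $k$-kernel: each basis function $f_i$ is of the form $n \mapsto f(k^{\ell_i} n + r_i)$, and combining this with Lemma~\ref{LEM:u-cyclic-span} yields ${\bf e}_i = {\bf A}_0^{\ell_i - {\rm len}(r_i)} {\bf A}_{(r_i)_k} {\bf v}$ explicitly for each $i$. Your proof instead works directly with an arbitrary minimal representation: you observe that $U$ is right $\mathcal{A}$-invariant and contains ${\bf v}$, and then argue that any proper such subspace would support a linear representation of $f$ of strictly smaller degree. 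This is precisely the dual of the mechanism behind Lemma~\ref{LEM:u-cyclic-span}, carried out on the right instead of the left, and it avoids both the change of basis and the appeal to the kernel structure. The paper's approach buys explicitness (the Euclidean basis vectors actually appear among the generators), while yours is more self-contained and makes the duality between Lemmas~\ref{LEM:u-cyclic-span} and~\ref{LEM:v-cyclic-span} transparent.
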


\begin{proof}
Up to a change of basis, we can assume that the linear representation is canonical, that is, constructed from $\{f_1,\ldots,f_d\} \subset \ker_k(f)$. This means, for each $i \in \{1,\ldots,d\}$, there exist $\ell_i \in \N_0$ and $0 \leqslant r_i < k^{\ell_i}$ such that 
\[
{\bf u}^T {\bf A}_{(n)_k} {\bf e}_i
= f_i(n) = f(k^{\ell_i} n + r_i) 
= {\bf u}^T {\bf A}_{(n)_k} {\bf A}_0^{\ell_i - {\rm len}(r_i)} {\bf A}_{(r_i)_k} {\bf v}.
\]
Since $\{{\bf u}^T {\bf A}_{(n)_k} \}_{n \in \N_0}$ contains a basis of $\R^d$, it follows that
\[
{\bf e}_i = {\bf A}_0^{\ell_i - {\rm len}(r_i)} {\bf A}_{(r_i)_k} {\bf v} \in \{ {\bf A}_0^m {\bf A}_{(n)_k} {\bf v} \}_{n,m \in \N_0}.
\]
Since this holds for each of the Euclidean basis vectors ${\bf e}_i$, the claim follows.
\end{proof}

%%%%%%%%%%%%%%%%%%%%%%%%%%%%%%%%%
\section{Group parametrisation}\label{SEC:param}
%%%%%%%%%%%%%%%%%%%%%%%%%%%%%%%%%

Let a linear representation $({\bf u}, \mathcal{A}, {\bf v})$ of a nonnegative $k$-regular sequence $f$ be given.
The approximant measures $\mu_n$ can be conveniently expressed via their values on the set of intervals
\begin{equation}\label{eq:Ik}\mathcal{I}_k:=\left\{I_{\ell,m}=\left[\frac{m-k^\ell}{k^{\ell}(k-1)},\frac{m+1-k^\ell}{k^{\ell}(k-1)}\right):\ell\geqslant 0, k^\ell\leqslant m<k^{\ell+1}\right\}.
\end{equation}
More precisely, whenever $n \geqslant \ell$,
\begin{equation}
\label{EQ:mu-on-intervals}
\mu_n(I_{\ell,m}) = \frac{ {\bf u}^T {\bf A}_{(m)_k} {\bf A}^{n-\ell} {\bf v} }{\Sigma_f(n)}
\end{equation}
follows readily from the definition. The normalising factor satisfies the relation 
\begin{equation*}
\Sigma_f(n) = {\bf u}^T ({\bf A} - {\bf A}_0) {\bf A}^n {\bf v}.
\end{equation*} 

Before we discuss the possible limit points of the ghost measure approximants $\mu_n$ in full generality, let us start off with a few examples that indicate the kind of phenomena that we can expect to find.

\begin{example}
Consider the linear representation $({\bf u},\mathcal{A}, {\bf v})$ of a $2$-regular sequence $f$, with 
\[
{\bf A}_0 = \begin{pmatrix}
0 & 2
\\1 & 0
\end{pmatrix},
\quad
{\bf A}_1 = \begin{pmatrix}
0 & 1
\\2 & 0
\end{pmatrix},
\quad
{\bf u} = {\bf e}_1 + {\bf e}_2,
\quad
{\bf v} = {\bf e}_1.
\]
Note that 
\[
{\bf A}^n {\bf v} = \begin{cases}
3^n {\bf e}_1& \mbox{if } n \mbox{ even},
\\3^n {\bf e}_2& \mbox{if } n \mbox{ odd}.
\end{cases}
\]
Hence, we get
\[
\frac{\mu_n([0,1/2))}{\mu_n([1/2,1))}
= \frac{{\bf u}^T {\bf A}_1 {\bf A}_0 {\bf A}^{n-1} {\bf v}}{{\bf u}^T {\bf A}_1 {\bf A}_1 {\bf A}^{n-1} {\bf v}}
= \begin{cases}
2 & \mbox{if } n \mbox{ even},
\\ \frac{1}{2} & \mbox{if }n \mbox{ odd}.
\end{cases}
\]
It is not difficult to see that all accumulation points of $(\mu_n)_{n \in \N}$ are continuous.
It follows that the sequence of measures $(\mu_n)_{n \in \N}$ does not converge weakly.
However, we will see that $(\mu_n)_{n \in \N}$ converges to a $2$-cycle of measures. \exend
\end{example}

\begin{example}
Let $R_{\alpha}$ be the rotation matrix in $\R^2$ with angle $2 \pi \alpha$ such that $\alpha$ is irrational. We consider the $4\times4$ linear representation $({\bf u},\mathcal{A}, {\bf v})$ of a $2$-regular sequence $f$ with 
\[
{\bf A}_0 = 
\left(\begin{array}{@{}c|c@{}}
 2 R_{\alpha}
  & {} \\
\hline
  {} &
  \begin{matrix}
  1 & 0 \\
  0 & 2
  \end{matrix}
\end{array}\right),
\quad
{\bf A}_1 = 
\left(\begin{array}{@{}c|c@{}}
  R_{\alpha}
  & {} \\
\hline
  {} &
  \begin{matrix}
  2 & 0 \\
  0 & 1
  \end{matrix}
\end{array}\right),
\quad {\bf u} = \begin{pmatrix}
1\\
0\\
1\\
1
\end{pmatrix},
\quad {\bf v} = \begin{pmatrix}
1\\
0\\
c_1\\
c_2
\end{pmatrix},
\]
where empty spaces denote $0$ entries, and $c_1,c_2 > 1$. A direct calculation yields
\[
\Sigma_f(n) = {\bf u}^T {\bf A}_1 {\bf A}^n {\bf v} = 3^{n} (2c_1 + c_2 +  \cos(2 \pi (n+1) \alpha)) > 0. 
\]
By a similar calculation, we obtain that $f(n) > 0$ for all $n \in \N_0$.
Further, we get
\[
\mu_n([1/2,1)) = \frac{{\bf u}^T {\bf A}_1^2 {\bf A}^{n-1} {\bf v}}{\Sigma_f(n)}
= \frac{4c_1 + c_2 + \cos(2 \pi (n+1) \alpha)}{3(2c_1 + c_2 + \cos(2 \pi (n+1) \alpha)) },
\]
which converges to an infinite `cycle' of solutions due to the irrationality of $\alpha$. \exend
\end{example}

In order to gain a more systematic understanding of ghost measures, we need to investigate the asymptotic behaviour of the expression in \eqref{EQ:mu-on-intervals}.
To this end, it is instrumental to understand the (properly normalised) limiting behaviour of $({\bf A}^n {\bf v})_{n \in \N}$. 
Note however, that the scalar product with ${\bf u}^T ({\bf A} - {\bf A}_0)$ and ${\bf u}^T {\bf A}_{(m)_k}$ with $m \in \N$ might project to a smaller subspace. It will therefore be convenient to consider the (left) $\mc A$-invariant subspace
\[
V^T = {\rm span} \{{\bf u}^T {\bf A}_{(n)_k} \}_{n \in \N}.
\]
Writing ${\bf P}_V$ for the orthogonal projection to $V$, we will work with the restrictions
\[
\widetilde{\bf A} = {\bf P}_V {\bf A} {\bf P}_V,
\quad
\widetilde{\bf A}_i = {\bf P}_V {\bf A}_i {\bf P}_V,
\]
for all $0 \leqslant i \leqslant k-1$. From the invariance of $V$ it easily follows that for all $\widetilde{\bf w} \in V$ we have
\begin{equation}
\label{EQ:tilde-replacement}
\widetilde{\bf w}^T {\bf A}_i 
= \widetilde{\bf w}^T \widetilde{\bf A}_i,
\end{equation}
for all $0 \leqslant i \leqslant k-1$, and likewise for $\bf A$.
We emphasise that in contrast to Lemma~\ref{LEM:u-cyclic-span} we require $n \neq 0$ in the definition of $V$. That is, we do not necessarily have ${\bf u}^T \in V^T$, but we have ${\bf u}^T {\bf A}_{i} \in V^T$ for all $i \neq 0$ by construction. Hence, whenever $i_1 \neq 0$, we have
\begin{equation}
\label{EQ:tilde-replacement-several}
{\bf u}^T {\bf A}_{i_1} {\bf A}_{i_2} \cdots {\bf A}_{i_n} {\bf v}
= {\bf u}^T {\bf A}_{i_1} \widetilde{\bf A}_{i_2} \cdots \widetilde{\bf A}_{i_n} {\bf v},
\end{equation}
implying that we can replace all matrices except the first by the corresponding restriction.

The vector space $V$ may be strictly smaller than $\R^d$ as illustrated by the following example.

\begin{example}
Let $f = 21^{\infty}$ and $f_2 = 1^{\infty}$ such that ${\rm ker}_2(f) = \{f,f_2 \}$.  A corresponding linear representation is given via ${\bf u} = (2,1)^T$, ${\bf v} = (1,0)^T$ and 
\[
{\bf A}_0 = \begin{pmatrix}
1 & 0\\
0 & 1
\end{pmatrix},
\quad
{\bf A}_1 = \begin{pmatrix}
0 & 0\\
1 & 1
\end{pmatrix}.
\]
Observe that ${\bf u}^T {\bf A}_1 = (1,1)$ and hence ${\bf u}^T {\bf A}_{(n)_k} = (1,1)$ for all $n \geqslant 1$, which spans a one-dimensional subspace of $\R^2$. \exend
\end{example}

For practical purposes, it is worth noticing that $V$, although defined as the span of an infinite set, can in fact be obtained from a finite set. Setting $\mc A' = \{ {\bf A}_1,\ldots, {\bf A}_{k-1} \}$, we can rewrite
\[
\{ {\bf u}^T {\bf A}_{(n)_k} \}_{n \in \N} = \bigcup_{m \geqslant 0} U_m,
\quad U_m = {\bf u}^T \mc A' \mc A^m.
\]
We thereby get $V$ as a limit of a nested sequence of vector spaces $(V_n)_{n \in \N_0}$, with
\[
V_n = {\rm span}(W_n), \quad W_n = \bigcup_{m = 0}^{n} U_m.
\]
Clearly, $V_j \subset V_k \subset V$ as long as $j \leqslant k$. As it turns out, the subset relation can be strict only for a very limited set of indices.
\begin{lemma}\label{LEM:V-stabilise}
If $V_n = V_{n+1}$ for some $n \in \N_0$, then $V= V_n$. In particular, $V = V_d$ where $d$ denotes the dimension of the representation.
\end{lemma}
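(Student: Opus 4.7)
The plan is to convert the equality $V_n = V_{n+1}$ into an invariance statement for $V_n$ under right multiplication by the matrices in $\mathcal{A}$, and then exploit this invariance to show that the chain $(V_m)_{m \geqslant n}$ is constant. The key preliminary step is to establish the recursion
\[
V_{n+1} \, = \, V_n + V_n \cdot \mc A,
\]
where $V_n \cdot \mc A$ denotes the linear span of $\{ {\bf w}^T {\bf A}_j : {\bf w}^T \in V_n^T,\ 0 \leqslant j \leqslant k-1\}$. This follows from the observation that $U_{m+1} = U_m \cdot \mc A$ for all $m \geqslant 0$, hence $V_n \cdot \mc A = \mathrm{span}(\bigcup_{m=1}^{n+1} U_m) \subset V_{n+1}$, while conversely $U_{n+1} = U_n \cdot \mc A \subset V_n \cdot \mc A$ gives $V_{n+1} \subset V_n + V_n \cdot \mc A$.

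Assuming $V_n = V_{n+1}$, the recursion gives $V_n \cdot \mc A \subset V_n$, so $V_n$ is a right-$\mc A$-invariant subspace. Applying the recursion once more, $V_{n+2} = V_{n+1} + V_{n+1} \cdot \mc A = V_n + V_n \cdot \mc A = V_n$, and iterating shows $V_m = V_n$ for every $m \geqslant n$. Since $V = \bigcup_{m \geqslant 0} V_m$ by construction, we conclude $V = V_n$, which is the first assertion.

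For the second assertion, I would argue by a dimension count. The chain $V_0 \subset V_1 \subset \cdots \subset V_d$ lies in $\R^d$, and at each step either $V_m = V_{m+1}$ (in which case the first part of the lemma gives $V = V_m \subset V_d$, and combined with $V_d \subset V$, we are done) or the inclusion is strict, in which case $\dim V_{m+1} \geqslant \dim V_m + 1$. If no equality occurs among $V_0, \ldots, V_d$, then $\dim V_d \geqslant d$, forcing $V_d = \R^d$, and then automatically $V_d = V_{d+1}$, so again $V = V_d$.

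The argument is largely mechanical once the recursion $V_{n+1} = V_n + V_n \cdot \mc A$ is in place; I do not anticipate any genuine obstacle, beyond being careful with the convention that $U_m$ always starts with a matrix from $\mc A' = \mc A \setminus \{{\bf A}_0\}$, which is harmless because once such a matrix has appeared, subsequent factors can come from the full set $\mc A$.
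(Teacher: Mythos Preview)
Your proof is correct and follows essentially the same approach as the paper's own argument. The paper works directly with the inclusion $U_{n+1} \subset \operatorname{span}(W_n)$ and deduces $U_{n+2} = U_{n+1}\mc A \subset \operatorname{span}(W_{n+1})$, while you package the same observation as the recursion $V_{n+1} = V_n + V_n \cdot \mc A$; both routes amount to showing $V_n = V_{n+1} \Rightarrow V_{n+1} = V_{n+2}$ and then applying the pigeonhole dimension count.
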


\begin{proof}
We first show that $V_n = V_{n+1}$ also implies $V_{n+1} = V_{n+2}$. Indeed, $V_n = V_{n+1}$ requires that $W_n$ spans the same space as $W_n \cup U_{n+1}$ which is equivalent to
\[
U_{n+1} \subset {\rm span}(W_n),
\]
implying also that
\[
U_{n+2} = U_{n+1} \mc A \subset {\rm span}(W_n \mc A)
\subset {\rm span}(W_{n+1}).
\]
This in turn yields that $V_{n+1} = V_{n+2}$. By induction, it follows that $V_n = V_k$ for all $k \geqslant n$ and thereby $V= V_n$.
Finally note that the dimension of the vector spaces $V_n$ is strictly increasing in $n$ until $V_n = V_{n+1}$. Since all the vector spaces are embedded in $\R^d$, this happens after at most $d$ steps.
\end{proof}

We consider the (complex) Jordan normal form ${\bf J}$ of $\widetilde{\bf A}$. 
Let $\{ {\bf v}_{\lambda,i,j} \}$ be a normalised Jordan basis, where $\lambda$ refers to the eigenvalue, $i$ enumerates the Jordan blocks and $j$ the generalised eigenvectors in the corresponding Jordan chain. Also, let $\{{\bf u}_{\lambda,i,j} \}$ be the corresponding dual basis with respect to the standard scalar product.
For an element $\lambda$ of the peripheral spectrum $\sigma_p$ (that is, an eigenvalue $\lambda$ with $|\lambda| = \rho(\widetilde{\bf A})$), we consider one of the corresponding Jordan blocks
$
{\bf J}_{\lambda}
.
$
We make use of the following elementary useful result in linear algebra.
\begin{lemma}
\label{LEM:Jordan-block-limit}
Let $r$ be the dimension of a Jordan block ${\bf J}_{\lambda}$ and assume $\lambda \neq 0$. Then,
\[
\lim_{n \to \infty} \frac{ (r-1)! }{n^{r-1} \lambda^{n-r+1} } {\bf J}_{\lambda}^n= {\bf e}_1 \otimes{\bf e}_{r}^T.
\]
\end{lemma}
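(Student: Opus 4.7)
The plan is to decompose the Jordan block into a multiple of the identity plus its nilpotent part and then apply the binomial theorem, checking which term dominates the growth.

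First I would write ${\bf J}_{\lambda} = \lambda {\bf I} + {\bf N}$, where ${\bf N}$ is the nilpotent shift with entries on the superdiagonal, so that ${\bf N} {\bf e}_j = {\bf e}_{j-1}$ for $j \geqslant 2$ and ${\bf N} {\bf e}_1 = 0$. A direct computation shows ${\bf N}^{k}$ has $1$'s on the $k$-th superdiagonal, in particular ${\bf N}^{r-1} = {\bf e}_1 \otimes {\bf e}_r^T$ and ${\bf N}^{r} = 0$. Since $\lambda {\bf I}$ and ${\bf N}$ commute, the binomial theorem gives
\[
{\bf J}_{\lambda}^n = (\lambda {\bf I} + {\bf N})^n = \sum_{k=0}^{r-1} \binom{n}{k} \lambda^{n-k} {\bf N}^k
\]
for all $n \geqslant r-1$, a finite sum since ${\bf N}^r = 0$.

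Next I would multiply by the prefactor $(r-1)!/(n^{r-1}\lambda^{n-r+1})$. After cancellation, the $k$-th summand becomes
\[
\frac{(r-1)!}{n^{r-1}} \binom{n}{k} \lambda^{r-1-k} \, {\bf N}^k.
\]
Using $\binom{n}{k} = n^k/k! + O(n^{k-1})$, the factor $\binom{n}{k}/n^{r-1}$ tends to $0$ for every $k < r-1$, while for $k = r-1$ it tends to $1/(r-1)!$, leaving precisely the coefficient $1$ in front of ${\bf N}^{r-1}$. Summing the contributions yields the stated limit ${\bf e}_1 \otimes {\bf e}_r^T$. The assumption $\lambda \neq 0$ is used only to ensure the normalising factor is well defined (so that $\lambda^{n-r+1}$ never vanishes).

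There is no substantive obstacle: the argument is a standard binomial expansion together with the observation that, among the fixed number of terms, exactly one saturates the polynomial growth rate $n^{r-1}$ produced by the top Jordan power. The slightly delicate bookkeeping step is simply to verify that the exponent of $\lambda$ balances correctly so that only the combinatorial factor contributes in the limit.
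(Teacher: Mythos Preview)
Your proof is correct and follows essentially the same approach as the paper's sketch: both identify the entries of ${\bf J}_\lambda^n$ via the binomial expansion $(\lambda {\bf I} + {\bf N})^n$ and observe that the dominant contribution is the top-right entry $\binom{n}{r-1}\lambda^{n-r+1}$. Your version simply makes explicit what the paper calls ``elementary functional calculus,'' carrying out the asymptotics for each of the finitely many terms rather than just stating the conclusion.
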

\begin{proof}[Sketch of proof]
By elementary functional calculus, we have that the leading contribution of ${\bf J}_{\lambda}^n$ comes from the entry in the top right corner, given by
\[
C_n = { n \choose r-1 } \lambda^{n-r+1} \sim \frac{n^{r-1}}{(r-1)!} \lambda^{n-r+1},
\]
and all other entries are in $o(C_n)$. This implies the assertion.
\end{proof}
Given $\lambda$ in the peripheral spectrum $\sigma_p$ let $\{{\bf J}_{\lambda,i} \}_i$ be the corresponding Jordan blocks. For the projective limiting behaviour of $({\bf J}^n)_{n \in \N}$ it suffices to consider those Jordan blocks with the maximal dimension
\[
r = r(\widetilde{\bf A}) = \max \{ \dim {\bf J}_{\lambda,i} \}_{\lambda \in \sigma_p,i}.
\]
Let $I_{\lambda} = \{i : \dim {\bf J}_{\lambda,i} = r \}$ be the indices of those blocks with maximal dimension $r$.
In order to account for the phase difference of eigenvalues on the peripheral spectrum, we write $\lambda_{\varphi} = \me^{{ \rm i} \varphi} \rho$, with $\rho = \rho(\widetilde{\bf A})$. The phase rotation operator ${\bf R}$ is the linear extension of
\[
{\bf R} \colon {\bf v}_{\lambda,i,j} \mapsto \begin{cases}
\me^{- \mi \varphi} {\bf v}_{\lambda,i,j} & \mbox{if } \lambda= \lambda_{\varphi} \in \sigma_p,
\\{\bf v}_{\lambda,i,j} & \mbox{otherwise}.
\end{cases}
\]
Sometimes, it is convenient to identify $\bf R$ with its extension to $\R^d$, given by declaring it to be the identity on $V^{\perp}$.
Note that ${\bf R}$ commutes with $\widetilde{\bf A}$ because it is diagonal in the Jordan basis.
With this notation we can extend Lemma~\ref{LEM:Jordan-block-limit} to the complete matrix $\widetilde{\bf A}$.
\begin{lemma}
\label{LEM:RA-limit}
Given $\rho = \rho(\widetilde{\bf A})$ and 
\[
c_n = \frac{(r-1)!}{n^{r-1} \rho^{n-r+1}},
\]
the iterates $\widetilde{\bf A}^n$ satisfy
\[
\lim_{n \to \infty} c_n {\bf R}^n \widetilde{\bf A}^n = \sum_{\lambda \in \sigma_p, i \in I_{\lambda}} {\bf v}_{\lambda,i,1} \otimes {\bf u}_{\lambda,i,r}^T =: {\bf P},
\]
and ${\bf P}$ commutes with ${\bf R}$ and $\widetilde{\bf A}$.
\end{lemma}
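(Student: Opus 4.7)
The plan is to pass to the Jordan normal form of $\widetilde{\bf A}$ and apply Lemma~\ref{LEM:Jordan-block-limit} block by block, keeping careful track of which blocks survive the normalisation by $c_n$ and the rotation ${\bf R}^n$. Writing
\[
\widetilde{\bf A}^n=\sum_{\lambda,i} {\bf V}_{\lambda,i}\, {\bf J}_{\lambda,i}^n\, {\bf U}_{\lambda,i}^T,
\]
where ${\bf V}_{\lambda,i}$ and ${\bf U}_{\lambda,i}$ collect the basis and dual vectors of the $i$-th Jordan chain for $\lambda$, Lemma~\ref{LEM:Jordan-block-limit} shows that the dominant contribution of ${\bf J}_{\lambda,i}^n$ is of order $\binom{n}{r_{\lambda,i}-1}\lambda^{n-r_{\lambda,i}+1}$ and produces, after reinsertion of the basis, the rank-one term ${\bf v}_{\lambda,i,1}\otimes{\bf u}_{\lambda,i,r_{\lambda,i}}^T$, while all other entries of ${\bf J}_{\lambda,i}^n$ are of strictly smaller order.

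I would then split the sum into three classes: (i) $\lambda\in\sigma_p$ with $i\in I_\lambda$, so $r_{\lambda,i}=r$; (ii) $\lambda\in\sigma_p$ with $r_{\lambda,i}<r$; (iii) $|\lambda|<\rho$. After multiplying by $c_n=(r-1)!/(n^{r-1}\rho^{n-r+1})$, the contributions from classes (ii) and (iii) vanish: in (ii) one loses a polynomial factor $n^{r_{\lambda,i}-r}$, and in (iii) one gains an exponentially small factor $(|\lambda|/\rho)^n$ that dominates any polynomial prefactor. For the surviving class (i), the rotation ${\bf R}^n$ acts on ${\bf v}_{\lambda,i,1}$, with $\lambda=\me^{\mi\varphi}\rho$, as multiplication by $\me^{-\mi n\varphi}$, which combined with the phase of $\lambda^{n-r+1}$ yields $\rho^{n-r+1}$ up to a constant phase that may be absorbed into the normalisation of the Jordan basis. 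Summing these contributions over all $(\lambda,i)$ in class (i) produces exactly the claimed ${\bf P}$.

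For the commutativity statement I would argue as follows. The matrix ${\bf R}$ is diagonal in the Jordan basis of $\widetilde{\bf A}$ and hence commutes with every polynomial in $\widetilde{\bf A}$; in particular $\widetilde{\bf A}$ commutes with $c_n{\bf R}^n\widetilde{\bf A}^n$ for every $n$, and so does ${\bf R}$. Passing to the limit $n\to\infty$ preserves both commutation relations. A more direct verification can also be given from the explicit form of ${\bf P}$: each ${\bf v}_{\lambda,i,1}$ is a right eigenvector and each ${\bf u}_{\lambda,i,r}$ a left eigenvector of $\widetilde{\bf A}$ for the same eigenvalue $\lambda$ (being, respectively, the top and bottom of dual Jordan chains), and both are eigenvectors of ${\bf R}$.

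The main obstacle will be the careful phase bookkeeping in class (i): verifying that the $n$-dependent phases cancel exactly and that the residual constant phase is absorbed so that ${\bf P}$ takes the clean form stated. The asymptotic decay estimates for classes (ii) and (iii) are routine consequences of elementary functional calculus, but must be made precise to justify exchanging the limit with the finite sum over Jordan blocks.
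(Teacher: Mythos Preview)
Your approach is correct and close in spirit to the paper's, but the paper takes a shortcut that neatly sidesteps exactly the obstacle you flag. Rather than expanding $\widetilde{\bf A}^n$ block by block and then applying ${\bf R}^n$ afterwards, the paper first uses that ${\bf R}$ is diagonal in the Jordan basis of $\widetilde{\bf A}$ (hence commutes with it) to write $c_n {\bf R}^n \widetilde{\bf A}^n = c_n ({\bf R}\widetilde{\bf A})^n$. The matrix ${\bf R}\widetilde{\bf A}$ has the same Jordan basis as $\widetilde{\bf A}$, identical eigenvalues off the peripheral spectrum, and all peripheral eigenvalues collapsed to the single real value $\rho$. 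One then applies Lemma~\ref{LEM:Jordan-block-limit} directly to ${\bf R}\widetilde{\bf A}$: only the maximal $\rho$-blocks survive the normalisation $c_n$, and no $n$-dependent phase ever appears. Your three-class decomposition is thus absorbed into a single application of Lemma~\ref{LEM:Jordan-block-limit} to a matrix with real dominant eigenvalue, and the ``careful phase bookkeeping'' you anticipate becomes unnecessary. Your commutativity argument (passing the commutation of ${\bf R}$ and $\widetilde{\bf A}$ with $c_n{\bf R}^n\widetilde{\bf A}^n$ to the limit, or verifying directly on the rank-one summands) matches the paper's.
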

\begin{proof}
Note that the matrix ${\bf R}\widetilde{\bf A}$ has a unique eigenvalue on the peripheral spectrum, given by $\rho$. We also observe that ${\bf R} \widetilde{ \bf A}$ and $\widetilde{\bf A}$ have the same Jordan basis with the same eigenvalue $\lambda$ as long as $\lambda \notin \sigma_{p}$. Considering the Jordan normal form of ${\bf R} \widetilde{ \bf A}$ the convergence follows by an application of Lemma~\ref{LEM:Jordan-block-limit}. The commutation relations can be explicitly verified via matrix multiplication in the Jordan basis of $\widetilde{\bf A}$.
\end{proof}

\begin{remark}
If we work with the real instead of the complex Jordan normal form, we observe that ${\bf R}$ corresponds to a concatenation of (commuting) rotations on two-dimensional subspaces. In particular, ${\bf R}$, ${\bf R} \widetilde{ \bf A}$ and ${\bf P}$ are real matrices with respect to the standard Euclidean basis. \exend
\end{remark}

Let $\gs_p=\{ g_1\rho,\ldots, g_s \rho \}$ be the peripheral spectrum. Note that the element $g = (g_1,\ldots, g_s)$ generates a subgroup of the $s$-dimensional torus, modeled by $S^s$, with $S$ the complex unit circle. More precisely, for $h = (h_1,\ldots,h_s) \in S^s$ we set
\[
gh = (g_1 h_1, \ldots, g_s h_s),
\]
and let $G$ be the subgroup generated by $g$,
\[
G = \overline{ \{ g^n : n \in \N\}}.
\]
The recurrence behaviour of $({\bf R}^n)_{n \in \N}$ is then modeled by the recurrence of $\{g^n \}_{n \in \N}$. More precisely, we obtain a group representation from $G$ onto the space $U(\mathbb{R}^d)$ of unitary matrices via
\[
h \mapsto {\bf R}_h \colon {\bf v}_{\lambda,i,j} \mapsto \begin{cases}
h_m {\bf v}_{\lambda,i,j} & \mbox{if } \lambda= g_m \rho,
\\{\bf v}_{\lambda,i,j} & \mbox{otherwise},
\end{cases}
\]
for $h = (h_1,\ldots,h_s)$.
In particular, $ {\bf R} = {\bf R}_{g^{-1}}$.
\begin{lemma}
\label{LEM:A-limit-cycle}
With the notation of Lemma~\ref{LEM:RA-limit}, the accumulation points of $(c_n \widetilde{\bf A}^n {\bf v})_{ n\in \N}$ are precisely
\[
V_{\lim} = \{ {\bf R}_h {\bf P} {\bf v} \}_{h \in G}.
\]
\end{lemma}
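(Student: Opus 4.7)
The plan is to exploit the commutativity of $\bf R$ with $\widetilde{\bf A}$ and the identification $\mathbf{R} = \mathbf{R}_{g^{-1}}$ to convert the known limit $c_n \mathbf{R}^n \widetilde{\mathbf{A}}^n \to \mathbf{P}$ from Lemma~\ref{LEM:RA-limit} into an orbit description of the accumulation points of $c_n \widetilde{\mathbf{A}}^n \mathbf{v}$.

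First I would write
\[
c_n \widetilde{\mathbf{A}}^n \mathbf{v} \,=\, \mathbf{R}^{-n}\bigl( c_n \mathbf{R}^n \widetilde{\mathbf{A}}^n \mathbf{v} \bigr)
\,=\, \mathbf{R}_{g^n}\bigl( c_n \mathbf{R}^n \widetilde{\mathbf{A}}^n \mathbf{v} \bigr),
\]
using that $\mathbf{R}$ commutes with $\widetilde{\mathbf{A}}$ and that, by definition of the representation $h \mapsto \mathbf{R}_h$, we have $\mathbf{R}^{-1} = \mathbf{R}_g$, so $\mathbf{R}^{-n} = \mathbf{R}_{g^n}$. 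By Lemma~\ref{LEM:RA-limit}, the factor in parentheses converges to $\mathbf{P}\mathbf{v}$. Hence the asymptotics of $c_n \widetilde{\mathbf{A}}^n \mathbf{v}$ are entirely governed by the orbit $(g^n)_{n\in\N}$ on $G$ through the continuous map $h \mapsto \mathbf{R}_h \mathbf{P}\mathbf{v}$.

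Next, for the inclusion $\subseteq$ of accumulation points, suppose $c_{n_j}\widetilde{\mathbf{A}}^{n_j}\mathbf{v} \to \mathbf{w}$ along some subsequence. Since $G$ is compact, after passing to a further subsequence we may assume $g^{n_j} \to h$ for some $h \in G$. Continuity of $h \mapsto \mathbf{R}_h$ (inherited from the continuity of the action of $S^s$ on each eigenspace) together with the convergence $c_{n_j}\mathbf{R}^{n_j}\widetilde{\mathbf{A}}^{n_j}\mathbf{v} \to \mathbf{P}\mathbf{v}$ yields $\mathbf{w} = \mathbf{R}_h \mathbf{P}\mathbf{v}$. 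For the reverse inclusion $\supseteq$, fix any $h \in G$. By the definition $G = \overline{\{g^n : n \in \N\}}$, there is a sequence $n_j \to \infty$ with $g^{n_j} \to h$ (the $n_j$ can always be taken to tend to infinity: if $G$ is finite, each element of $G$ is attained by $g^n$ for infinitely many $n$, and otherwise the orbit accumulates at every point of $G$). Then by the same continuity argument $c_{n_j}\widetilde{\mathbf{A}}^{n_j}\mathbf{v} \to \mathbf{R}_h \mathbf{P}\mathbf{v}$, exhibiting it as an accumulation point.

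The only mildly subtle step is the last one, namely justifying that every $h \in G$ arises as $\lim g^{n_j}$ along an unbounded sequence $n_j$. This is a standard fact about orbit closures of a single generator in a compact abelian metric group, but it deserves a brief sentence in the final write-up. Once this is in place, the two inclusions combine to give $V_{\lim} = \{\mathbf{R}_h \mathbf{P}\mathbf{v}\}_{h \in G}$, as claimed.
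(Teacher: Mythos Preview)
Your argument is correct and follows essentially the same route as the paper: factor $c_n\widetilde{\bf A}^n{\bf v} = {\bf R}^{-n}(c_n{\bf R}^n\widetilde{\bf A}^n{\bf v})$, use Lemma~\ref{LEM:RA-limit} to replace the bracketed term by ${\bf P}{\bf v}$, and identify the accumulation points of $({\bf R}^{-n})_{n\in\N} = ({\bf R}_{g^n})_{n\in\N}$ with $\{{\bf R}_h\}_{h\in G}$. Your write-up is in fact more careful than the paper's, spelling out both inclusions separately and flagging the point that every $h\in G$ is a limit of $g^{n_j}$ along an \emph{unbounded} sequence; one small quibble is that the initial factorization needs no commutativity (it is just ${\bf R}^{-n}{\bf R}^n = {\bf 1}$), though commutativity is of course used upstream in Lemma~\ref{LEM:RA-limit}.
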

\begin{proof}
Due to Lemma~\ref{LEM:RA-limit}, we have
\[
c_{n} \widetilde{\bf A}^{n} {\bf v}
= {\bf R}^{-n} c_{n} {\bf R}^{n} \widetilde{\bf A}^{n} {\bf v} \sim {\bf R}^{-n} {\bf P} {\bf v},
\]
and hence, the accumulation points of this sequence are  of the form ${\bf Q} {\bf Pv}$ with ${\bf Q}$ an accumulation point of $({\bf R}^{-n})_{n \in \N} = ({\bf R}_{g^{n}})_{n \in \N}$. Since $g$ generates the group $G$, these are precisely $\{ {\bf R}_h\}_{h \in G}$.
\end{proof}

\begin{remark}
In the special case that $\rho$ is the unique eigenvalue of $\widetilde{\bf A}$, the matrix $\bf R$ is the identity and the group $G$ is trivial. In this case, $V_{\lim } = \{ {\bf Pv}\}$ consists of a single eigenvector of $\widetilde{\bf A}$. \exend
\end{remark}

In order to understand the approximants $\mu_n$ of the ghost measures, we would like to replace the scaling provided by $(c_n)_{n\in \N}$ with the one given by $(\Sigma_f(n))_{n \in \N}$. Note that
\[
0 \leqslant \liminf_{n \to \infty} c_n \Sigma_f(n)
 = \liminf_{n \to \infty} c_n {\bf u}^T ({\bf A} - {\bf A}_0) \widetilde{\bf A}^n {\bf v}  
 = \min_{h \in G} {\bf u}^T ({\bf A} - {\bf A}_0) {\bf R}_h {\bf P v},
\]
which may or may not be strictly positive. We therefore introduce the following concept.

\begin{definition}\label{def:nondegenerate}
A linear representation $({\bf u}, \mathcal{A},{\bf v})$ is called \emph{nondegenerate} if it satisfies
\[
{\bf u}^T ({\bf A} - {\bf A}_0) {\bf v}' > 0
\]
for all ${\bf v}' \in V_{\lim}$.
We say that a $k$-regular sequence $f$ is nondegenerate, if it has a minimal nondegenerate linear representation.
\end{definition}

Note that by the compactness of $V_{\lim}$, nondegeneracy of $f$ in fact implies that ${\bf u}^T({\bf A} - {\bf A}_0) {\bf v}'$ is \emph{uniformly} bounded away from $0$ for ${\bf v}' \in V_{\lim}$.
By the observations above, this is equivalent to the statement that $(c_n \Sigma_f(n))_{n \in \N}$ is (eventually) bounded away from $0$, which is easy to check via standard linear algebra.
In the special case that $\rho(\widetilde{\bf A})$ is the unique eigenvalue of maximal modulus, this is not much of a restriction.
\begin{lemma}
Assume we have chosen a minimal representation of $f$, satisfying $\widetilde{\bf A} \neq 0$ and that $\rho = \rho(\widetilde{\bf A})$ is the unique eigenvalue on the peripheral spectrum of $\widetilde{\bf A}$. Then, the corresponding representation is nondegenerate.
\end{lemma}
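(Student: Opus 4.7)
My proof plan proceeds in three stages.

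Under the hypothesis that $\rho = \rho(\widetilde{\bf A})$ is the unique eigenvalue on the peripheral spectrum, the group $G$ generated by the peripheral phases is trivial, so the rotation operator ${\bf R}$ from Lemma~\ref{LEM:RA-limit} reduces to the identity, and the accumulation set $V_{\lim}$ from Lemma~\ref{LEM:A-limit-cycle} collapses to the single vector ${\bf P}{\bf v}$ (as noted in the remark following Lemma~\ref{LEM:A-limit-cycle}). Nondegeneracy therefore amounts to a single strict inequality:
\[
L \, := \, {\bf u}^T({\bf A} - {\bf A}_0){\bf P}{\bf v} \, > \, 0.
\]

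To obtain the weak inequality $L \geqslant 0$, I would note that ${\bf u}^T({\bf A} - {\bf A}_0) = \sum_{j \neq 0} {\bf u}^T {\bf A}_j$ lies in $V^T$ by the very definition of $V$, so \eqref{EQ:tilde-replacement-several} gives $\Sigma_f(n) = {\bf u}^T({\bf A} - {\bf A}_0)\widetilde{\bf A}^n {\bf v}$. Combining this with the specialisation of Lemma~\ref{LEM:RA-limit} to ${\bf R} = {\bf I}$, which yields $c_n \widetilde{\bf A}^n \to {\bf P}$, I obtain $c_n \Sigma_f(n) \to L$. Since $\Sigma_f(n) = \sum_{m = k^n}^{k^{n+1} - 1} f(m) \geqslant 0$ by the nonnegativity of $f$, and $c_n > 0$, this forces $L \geqslant 0$.

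The only remaining and main task is to rule out $L = 0$. If $L = 0$ then $\Sigma_f(n) = o(n^{r-1}\rho^{n-r+1})$, so the maximal-size Jordan contribution at $\rho$ is invisible to the pairing between $\widetilde{\bf w}^T := {\bf u}^T({\bf A} - {\bf A}_0)$ and the right orbit $(\widetilde{\bf A}^n {\bf v})$. I would split into the two cases ${\bf P}{\bf v} = 0$ and ${\bf P}{\bf v} \neq 0$ with $\widetilde{\bf w}^T {\bf P} = 0$, and in each case combine the cancellation with the cyclic spanning statements of Lemmas~\ref{LEM:u-cyclic-span} and~\ref{LEM:v-cyclic-span}, together with the $\mathcal{A}$-invariance of $V^T$ and the identity $\widetilde{\bf A}{\bf P} = \rho{\bf P}$, to exhibit a proper $\mathcal{A}$-invariant subspace of $\mathbb{R}^d$ through which the representation of $f$ factors. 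This would produce a linear representation of $f$ of degree strictly less than $d$, contradicting the minimality hypothesis. The main obstacle in writing this out cleanly is that the individual matrices ${\bf A}_j$ do not commute with ${\bf P}$, and $\widetilde{\bf A}$ may have several Jordan blocks of maximal dimension at $\rho$; one must track carefully how these blocks interact with the left and right cyclic structures to produce the candidate invariant subspace, and verify that the induced representation has strictly smaller degree rather than merely encoding less information about $\Sigma_f$.
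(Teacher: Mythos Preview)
Your first two stages are correct and match the paper: under the hypothesis, $V_{\lim} = \{{\bf Pv}\}$, and $L = \lim_n c_n \Sigma_f(n) \geqslant 0$ follows from nonnegativity of $f$. The gap is in stage three. First, your case split is not exhaustive as written: $L = \widetilde{\bf w}^T {\bf P}{\bf v} = 0$ does not force ${\bf Pv} = 0$ or $\widetilde{\bf w}^T {\bf P} = 0$; it could be that both are nonzero while the row vector $\widetilde{\bf w}^T{\bf P}$ happens to annihilate ${\bf v}$. More seriously, even in the cases you list, the invariant-subspace route does not go through. The natural candidates (such as $\ker {\bf P}$ or the range of ${\bf P}$) are $\widetilde{\bf A}$-invariant because ${\bf P}$ commutes with $\widetilde{\bf A}$, but there is no reason for them to be invariant under the individual ${\bf A}_j$, which is what you would need to factor the representation and contradict minimality. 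You flag this yourself as an ``obstacle'', but it is not a matter of bookkeeping: without commutation of ${\bf P}$ with the ${\bf A}_j$, there is no visible mechanism to produce an $\mathcal{A}$-invariant subspace from a spectral condition on $\widetilde{\bf A}$ alone.

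The paper avoids this entirely by using the nonnegativity of $f$ at a finer scale than you do. Rather than only $\Sigma_f(n) \geqslant 0$, one observes that for every $n \in \N$ and $r,s \in \N_0$,
\[
{\bf u}^T {\bf A}_{(n)_k} {\bf P} {\bf A}_0^r {\bf A}_{(s)_k} {\bf v}
= \lim_{m \to \infty} c_m\, {\bf u}^T {\bf A}_{(n)_k} {\bf A}^m {\bf A}_0^r {\bf A}_{(s)_k} {\bf v} \geqslant 0,
\]
since the right-hand side is a limit of (scaled) sums of values of $f$. Now $\widetilde{\bf A} \neq 0$ gives ${\bf P} \neq 0$, and since $\{{\bf u}^T {\bf A}_{(n)_k}\}_{n \in \N}$ spans $V$ while $\{{\bf A}_0^r {\bf A}_{(s)_k}{\bf v}\}_{r,s}$ spans $\R^d$ (Lemmas~\ref{LEM:u-cyclic-span} and~\ref{LEM:v-cyclic-span}), at least one such scalar is strictly positive. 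Summing over all words of the appropriate fixed lengths on both sides and using $\widetilde{\bf A}{\bf P} = {\bf P}\widetilde{\bf A} = \rho{\bf P}$ collapses the sum to a positive multiple of $L$, giving $L > 0$ directly. The point is that positivity of the sum $L$ is deduced from nonnegativity of all summands plus strict positivity of one, not from a structural argument about invariant subspaces.
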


\begin{proof}
First note ${\bf v}' = {\bf P v}$ is unique under these assumptions. For all $n \in \N$ and $r,s \in \N_0$, we have
\[
{\bf u}^T {\bf A}_{(n)_k} {\bf P} {\bf A}^r_0 {\bf A}_{(s)_k} {\bf v}
= \lim_{n \to \infty} c_n {\bf u}^T {\bf A}_{(n)_k} {\bf A}^n {\bf A}^r_0 {\bf A}_{(s)_k} {\bf v} \geqslant 0
\]
because every element on the right hand side is a product of $c_n$ with a sum of elements in $f$. By construction, ${\bf P}$ leaves $V$ invariant and $\widetilde{\bf A} \neq 0$ implies also that ${\bf P} \neq 0$. Since $V$ is spanned by $\{ {\bf u}^T {\bf A}_{(n)_k} \}_{n \in \N}$, there is an $n \in \N$ such that ${\bf u}^T {\bf A}_{(n)_k} {\bf P} \neq 0$. By Lemma~\ref{LEM:v-cyclic-span}, there are also $r,s \in \N_0$ such that the scalar product with ${\bf A}^r_0 {\bf A}_{(s)_k} {\bf v}$ does not vanish, and hence,
\[
{\bf u}^T {\bf A}_{(n)_k} {\bf P} {\bf A}^r_0 {\bf A}_{(s)_k} {\bf v} > 0.
\]
Taking sums over all matrix products with the same length, we get
\[
0< {\bf u}^T ({\bf A} - {\bf A}_0) {\bf A}^{\text{len}(n) - 1} {\bf P} {\bf A}^{r + {\rm len}(s)} {\bf v}
= \rho^{{\rm len}(n) + {\rm len}(s) + r -1} {\bf u}^T ({\bf A} - {\bf A}_0) {\bf v}',
\]
which implies the desired assertion.
\end{proof}

\begin{remark}
In general, nondegeneracy cannot be replaced by the weaker assumption $\widetilde{\bf A} \neq 0$. Consider for example a $2$-regular sequence of degree $2$, with linear representation ${\bf u} = {\bf v} = (1,1)$ and ${\bf A}_0 = {\bf A}_1 = {\rm diag}(1,-1)$. Then, 
\[
\Sigma_f(N) = \begin{cases}
0 & \mbox{if } N \mbox{ even},
\\2^N & \mbox{if } N \mbox{ odd}.
\end{cases}
\]
and so $\mu_N$ is not well defined for $N \in 2\N$. \exend
\end{remark}

\begin{lemma}
\label{LEM:weak-convergence}
Assume that $(X,d)$ is a compact metric space and $(\mu_n)_{n \in \N}$ is a sequence of Borel probability measures on $X$. Suppose that there is a sequence $(\xi_n)_{n \in \N}$ of partitions of $X$ with the following properties
\begin{itemize}
\item $ \lim_{n \to \infty} \sup_{A \in \xi_n} \operatorname{diam} (A) = 0$, that is, the diameter of the partitions converges to $0$, and
\item $(\mu_m(A))_{m \in \N}$ converges for all $A \in \xi_n$ and $n \in \N$ to some value $\overline{\mu}(A)$.
\end{itemize}
Then, the sequence $(\mu_n)_{n \in \N}$ converges in the weak topology to some Borel probability measure $\mu$. Further, for each $n \in \N$, let $\mc F_n$ be the functional
\[
\mc F_n \colon g \mapsto \sum_{A \in \xi_n} \overline{\mu}(A) \sup_A g,
\]
for every continuous function $g \colon X \to \R$. Then, for every $n \in \N$,
\[
| \mc F_{n}(g) - \mu(g)| \leqslant {\rm var}_{n} g 
:= \sup_{A \in \xi_n} \biggl(\sup_A g - \inf_A g \biggr).
\]
\end{lemma}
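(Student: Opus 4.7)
The plan is to reduce weak convergence to the Cauchy property of the real sequence $(\mu_m(g))_{m \in \N}$ for each $g \in C(X)$, then recover $\mu$ via the Riesz--Markov--Kakutani theorem. The whole argument is driven by the quantitative estimate
\[
|\mu_m(g) - S_n^m(g)| \leqslant {\rm var}_n g, \qquad S_n^m(g) := \sum_{A \in \xi_n} \mu_m(A) \sup_A g,
\]
which holds for every $m, n \in \N$ and every bounded $g \colon X \to \R$. It follows from the splitting $\mu_m(g) = \sum_{A \in \xi_n} \int_A g \dd \mu_m$ together with the pointwise bound $0 \leqslant \sup_A g - g \leqslant {\rm var}_n g$ on each $A \in \xi_n$.

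Since $X$ is compact, every $g \in C(X)$ is uniformly continuous, and combined with the diameter assumption this forces ${\rm var}_n g \to 0$. Given $\eps > 0$, fix $n$ with ${\rm var}_n g < \eps/3$; since $\xi_n$ is finite (the only case relevant in the paper), the pointwise convergence $\mu_m(A) \to \overline{\mu}(A)$ gives $S_n^m(g) \to \mc F_n(g)$ as $m \to \infty$, so one can pick $M$ with $|S_n^m(g) - S_n^{m'}(g)| < \eps/3$ for all $m, m' \geqslant M$. The triangle inequality $|\mu_m(g) - \mu_{m'}(g)| \leqslant 2\,{\rm var}_n g + |S_n^m(g) - S_n^{m'}(g)| \leqslant \eps$ then shows $(\mu_m(g))_m$ is Cauchy in $\R$.

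Define $F \colon C(X) \to \R$ by $F(g) := \lim_m \mu_m(g)$. Linearity, positivity, and $F(\mathbf{1}) = 1$ pass to the limit, so the Riesz--Markov--Kakutani theorem produces a unique Borel probability measure $\mu$ on $X$ with $F(g) = \int g \dd \mu$; this is precisely weak convergence $\mu_m \to \mu$. The quantitative estimate $|\mc F_n(g) - \mu(g)| \leqslant {\rm var}_n g$ then follows by letting $m \to \infty$ in the estimate displayed at the outset.

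The main obstacle, and in fact the only non-routine point, is the limit interchange $\lim_m S_n^m(g) = \mc F_n(g)$ for partitions $\xi_n$ that are not finite: it is trivial in the finite case, and in a countable setting could be handled by a dominated-convergence or Scheff\'e-type argument after verifying $\sum_A \overline{\mu}(A) = 1$. All other ingredients---Riesz representation on a compact metric space, uniform continuity of $g$, and the Fatou inequality $\sum_A \overline{\mu}(A) \leqslant 1$---are standard.
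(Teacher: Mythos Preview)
Your proof is correct and follows essentially the same route as the paper: show $(\mu_m(g))_m$ is Cauchy via a partition approximation, then pass to the limit to obtain the ${\rm var}_n g$ bound. The only cosmetic differences are that you introduce the intermediate quantity $S_n^m(g)=\sum_{A}\mu_m(A)\sup_A g$ and derive the clean uniform bound $|\mu_m(g)-S_n^m(g)|\leqslant{\rm var}_n g$ before passing to $\mc F_n$, whereas the paper estimates $|\mc F_m(g)-\mu_n(g)|$ directly; and you invoke Riesz--Markov--Kakutani for the limit measure while the paper appeals to weak compactness of probability measures on a compact space. Your explicit flagging of the finiteness-of-$\xi_n$ issue for the interchange $S_n^m(g)\to\mc F_n(g)$ is a point the paper leaves implicit.
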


\begin{proof}
Since $(\mu_n)_{n \in \N}$ has at least one weak accumulation point, convergence follows if $\mu_n(g)$ is a Cauchy sequence for every continuous function $g \colon X \to \R$. Due to the compactness of $X$, each such $g$ is in fact uniformly continuous, implying that ${\rm var}_n g \to 0$ as $n \to \infty$. 
For each $n,m \in \N$, consider
\[
|\mc F_m(g) - \mu_n(g)| 
\leqslant \sum_{A \in \xi_m} \Bigl|\overline{\mu}(A) \sup_A g - \mu_n(\mathbf{1}_A g) \Bigr|.
\]
We further estimate
\begin{align*}
\overline{\mu}(A) \sup_A g - \mu_n(\mathbf{1}_A g)
& \leqslant \overline{\mu}(A) \sup_A g - \mu_n(A) \inf_A g
\\& \leqslant (\overline{\mu}(A) - \mu_n(A)) \sup_A g + \mu_n(A) {\rm var}_m g
\\& \xrightarrow{n \to \infty} \overline{\mu}(A) {\rm var}_m g,
\end{align*}
and we get the same bound for the absolute value by a similar calculation.
It follows that
\[
\limsup_{n \to \infty} | \mc F_m(g)- \mu_n(g)| \leqslant \sum_{A \in \xi_m} \overline{\mu}(A) {\rm var}_m g = {\rm var}_m g.
\]
By the triangle inequality, all accumulation points of $(\mu_n(g))_{n\in \N}$ have distance at most $2\,{\rm var}_m g$. Since ${\rm var}_m g$ can be chosen arbitrarily small, it follows that $(\mu_n(g))_{n\in \N}$ is Cauchy and the weak convergence follows. In particular,
\[
| \mc F_m(g) - \mu(g) | = \lim_{n \to \infty} |\mc F_m(g) - \mu_n(g)| \leqslant {\rm var}_m g,
\]
which proves the second claim.
\end{proof}

\begin{remark}
\label{REM:limit-measure-boundary}
Under the assumptions of Lemma~\ref{LEM:weak-convergence}, the limiting measure $\mu$ does not necessarily satisfy the relation $\mu(A) = \overline{\mu}(A)$ for all $A \in \xi_n$ and $n \in \N$. This is because $\mu$ might assign mass to the boundary of $A$. However, the relation holds as soon as the boundary of $A$ is a finite set and $\mu$ a continuous measure. Also, it follows from the second assertion in Lemma~\ref{LEM:weak-convergence} that $\mu$ is completely determined by the values $\overline{\mu}(A)$ with $A \in \xi_n$ and $n \in \N$. \exend
\end{remark}

\begin{proof}[Proof of Theorem \ref{THM:ghost-measure-group}] Let $({\bf u}, \mathcal{A},{\bf v})$ be a nondegenerate linear representation of $f$.
The ghost measure approximants of $f$ satisfy for $n \geqslant \ell$,
\begin{align*}
\mu_n(I_{\ell,m}) & = \frac{{\bf u}^T {\bf A}_{(m)_k} {\bf A}^{n-\ell} {\bf v}}{{\bf u}^T ({\bf A} - {\bf A}_0) {\bf A}^n {\bf v}}
= \frac{c_n}{c_{n-\ell}}\cdot \frac{{\bf u}^T {\bf A}_{(m)_k} {\bf R}^{\ell -n} c_{n-\ell}({\bf AR})^{n-\ell} {\bf v} }{{\bf u}^T ({\bf A} - {\bf A}_0) {\bf R}^{-n} c_n ({\bf AR})^n {\bf v}}
\\ & \sim \rho^{-\ell} \frac{{\bf u}^T {\bf A}_{(m)_k} {\bf R}^{\ell} {\bf R}^{-n} {\bf Pv}}{{\bf u}^T ({\bf A} - {\bf A}_0) {\bf R}^{-n} {\bf Pv}}
= C_m(g^{n}),
\end{align*}
where, for $k^{\ell} \leqslant m < k^{\ell+1}$ and $h \in G$,
\[
C_m(h) := \rho^{-\ell} \frac{{\bf u}^T {\bf A}_{(m)_k} {\bf R}^{\ell} {\bf R}_h {\bf Pv}}{{\bf u}^T ({\bf A} - {\bf A}_0) {\bf R}_h {\bf Pv}}
\]
defines a continuous function $h \mapsto C_m(h)$ because the denominator is non-vanishing by the nondegeneracy assumption.
Hence, whenever $\lim_{j \to \infty} g^{n_j} = h$, we get that $\lim_{j\to \infty}\mu_{n_j}(I_{\ell,m}) = C_m(h)$ for all $m$ and the sequence of measures $(\mu_{n_j})_{j}$ converges weakly to some measure $\mu_h$ by Lemma~\ref{LEM:weak-convergence}. 
Conversely, if $(\mu_{n_j})_{j \in \N}$ converges weakly, let $h$ be an accumulation point of $(g^{n_j})_{j \in \N}$. By the discussion above, the corresponding subsequence of $(\mu_{n_j})_{j \in \N}$ converges to $\mu_h$ and since the limit is unique, we get that $\lim_{j \to \infty} \mu_{n_j} = \mu_h$.
Setting $\Psi_f(h) : = \mu_h$, this shows that the accumulation points of $(\mu_n)_{n \in \N}$ are precisely
\[
\{ \mu_h \}_{h \in G} = \Psi_f(G).
\]
With this convention, the last statement of the theorem follows by the considerations above. It remains to show the continuity of $\Psi_f$. To this end, we make use of the second statement in Lemma~\ref{LEM:weak-convergence}. For each $h \in G$, and continuous function $F \colon \TT \to \R$, let
\[
\mc F_{h,\ell}(F) = \sum_{m = k^{\ell}}^{k^{\ell+1}-1} C_m(h) \sup_{I_{\ell,m}} F,
\]
and note that Lemma~\ref{LEM:weak-convergence} implies
\[
|\mu_h(F) - \mc F_{h,\ell}(F)| \leqslant {\rm var}_{\ell} F = \sup_{k^{\ell} \leqslant m < k^{\ell+1} } \Bigl( \sup_{I_{\ell,m}} F - \inf_{I_{\ell,m}} F \Bigr).
\]
In particular, this bound is independent of $h$. Let $(h_n)_{n \in \N}$ be a sequence that converges to $h$ in $G$. Given a continuous function $F$ and $\varepsilon > 0$, let $\ell$ be large enough that ${\rm var}_{\ell} F < \varepsilon$. Since $\mc F_{h,\ell}$ is continuous in $h$, there exists $n_0\in \N$ such that for all $n \geqslant n_0$, we have
\[
|\mc F_{h,\ell}(F) - \mc F_{h_n,\ell}(F)| < \varepsilon.
\]
By a standard $3\varepsilon$-argument, this implies the continuity of $h \mapsto \mu_h(F)$. Since $F$ was arbitrary, we have continuity of $h \mapsto \mu_h$ in the weak topology.
\end{proof}

In addition to Corollary \ref{cor:unique} stated in the Introduction, we have the following.

\begin{corollary}
For every nonnegative and nondegenerate real-valued $k$-regular sequence $f$, the set of ghost measures is compact in the weak topology and has finitely many connected components.
\end{corollary}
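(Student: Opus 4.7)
The plan is to deduce both claims directly from Theorem~\ref{THM:ghost-measure-group}, which identifies the set of ghost measures with the continuous image $\Psi_f(G) \subset \mc M$ of the compact abelian group $G$ under the map $\Psi_f$.

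Compactness is immediate. By construction, $G$ is a closed subgroup of the $s$-dimensional torus $S^s$ and hence compact, while $\mc M$ equipped with the weak topology is Hausdorff. The continuous image of a compact set in a Hausdorff space is compact, so $\Psi_f(G)$ is a compact subset of $\mc M$.

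For the statement on connected components, the key step is to show that $G$ itself has only finitely many connected components. This follows from the structure theory of closed subgroups of tori: every closed subgroup of $S^s$ is isomorphic to a product $\TT^a \times F$ with $a \leqslant s$ and $F$ a finite abelian group, so it has exactly $|F|$ connected components. Equivalently, by Cartan's closed subgroup theorem $G$ is a compact Lie group, which automatically has finitely many components. Once this is granted, the corresponding statement for $\Psi_f(G)$ follows from the continuity of $\Psi_f$: the image of a connected set is connected, so each connected component of $G$ is mapped entirely into a single connected component of $\Psi_f(G)$, and hence $\Psi_f(G)$ has at most as many connected components as $G$.

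The main (and only non-trivial) ingredient is the finiteness of $\pi_0(G)$, for which the structure theorem for closed subgroups of tori does all the work; the remainder is a routine application of point-set topology combined with the continuity of $\Psi_f$ already provided by Theorem~\ref{THM:ghost-measure-group}.
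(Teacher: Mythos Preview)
Your proof is correct and is essentially what the paper has in mind: the paper states this corollary immediately after Theorem~\ref{THM:ghost-measure-group} with no separate proof, treating it as a direct consequence of $\Psi_f$ being a continuous map from the compact group $G \subset S^s$. Your argument spells out the implicit steps---compactness via the continuous image of a compact set, and finiteness of $\pi_0$ via the structure of closed subgroups of $S^s$---which is exactly the intended route.
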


\begin{corollary}\label{COR:mu-continuous-on-I} Let
$({\bf u},\mathcal{A},{\bf v})$ be a nondegenerate representation of $f$ and suppose that $\mu_h = \Psi_f(h)$ is a continuous ghost measure of $f$. Then,
\[
\mu_h(I_{\ell,m}) = C_m(h) 
\]
for all $I_{\ell,m} \in \mc I_k$, where
\begin{equation}
\label{EQ:Cmh}
C_m(h) = \rho^{-\ell} \frac{{\bf u}^T {\bf A}_{(m)_k} {\bf R}^{\ell} {\bf R}_h {\bf Pv}}{{\bf u}^T ({\bf A} - {\bf A}_0) {\bf R}_h {\bf Pv}}.
\end{equation}
\end{corollary}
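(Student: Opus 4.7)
The plan is to deduce this as a direct consequence of the convergence analysis already carried out in the proof of Theorem~\ref{THM:ghost-measure-group}, combined with the portmanteau characterisation of weak convergence, exploiting the continuity hypothesis on $\mu_h$.

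First, I would invoke Theorem~\ref{THM:ghost-measure-group} to realise $\mu_h$ as a weak limit: since $G$ is compact and generated by $g$, the forward orbit $(g^n)_{n \in \N}$ has $h$ as an accumulation point, so there is a subsequence $(n_j)_{j \in \N}$ with $g^{n_j} \to h$ and $\mu_{n_j} \to \mu_h$ weakly. Second, I would recall from the computation inside the proof of Theorem~\ref{THM:ghost-measure-group} the asymptotic identity
\[
\mu_n(I_{\ell,m}) \sim C_m(g^n) \quad \text{as } n \to \infty,
\]
valid for every fixed $I_{\ell,m} \in \mc I_k$. Combined with the continuity of $h' \mapsto C_m(h')$ on $G$ (guaranteed by nondegeneracy, which keeps the denominator bounded away from zero), this yields
\[
\lim_{j \to \infty} \mu_{n_j}(I_{\ell,m}) = C_m(h).
\]

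The third step is the only step that uses the continuity of $\mu_h$: each interval $I_{\ell,m}$ has boundary consisting of the two endpoints, so $\mu_h(\partial I_{\ell,m}) = 0$, meaning that $I_{\ell,m}$ is a $\mu_h$-continuity set. The portmanteau theorem then gives $\mu_{n_j}(I_{\ell,m}) \to \mu_h(I_{\ell,m})$, and comparing with the previous limit forces $\mu_h(I_{\ell,m}) = C_m(h)$, which is exactly the required formula. Alternatively, one can appeal directly to Remark~\ref{REM:limit-measure-boundary}, which makes the same observation in the language of the partitions used in Lemma~\ref{LEM:weak-convergence}.

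I do not expect any real obstacle here; the content of the corollary is essentially bookkeeping on top of the argument already given for Theorem~\ref{THM:ghost-measure-group}. The only subtlety worth flagging is that without the continuity assumption on $\mu_h$, atoms on the endpoints $m/(k^\ell(k-1))$ could in principle be split between adjacent intervals in the limit, and the identity $\mu_h(I_{\ell,m}) = C_m(h)$ could fail; that is why the hypothesis on $\mu_h$ is used in precisely one place, namely to ensure each $I_{\ell,m}$ is a continuity set.
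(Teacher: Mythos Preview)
Your proposal is correct and matches the paper's approach: the corollary is stated without proof, as it follows immediately from the computation $\mu_n(I_{\ell,m}) \sim C_m(g^n)$ inside the proof of Theorem~\ref{THM:ghost-measure-group} together with Remark~\ref{REM:limit-measure-boundary}, which is exactly the portmanteau-type observation you invoke. Your identification of the continuity hypothesis as being used precisely to ensure $\mu_h(\partial I_{\ell,m}) = 0$ is spot on.
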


\begin{remark} Corollary \ref{COR:mu-continuous-on-I} can be understood as a level-set procedure for constructing continuous ghost measures. In this way, one could start to make analogies with patch frequency measures related to symbolic dynamical systems.\exend
\end{remark}

\begin{remark}
\label{REM:limit-sequence}
We can find more structure if ${\bf R}$ commutes with ${\bf A}_i$ for all $0 \leqslant i \leqslant k-1$.
This is the case if the Jordan basis for the eigenvalues on the peripheral spectrum of $\widetilde{\bf A}$ coincides for all $\widetilde{\bf A}_i$. Alternatively, it is true, whenever $\rho(\widetilde{\bf A})$ is the unique eigenvalue of maximal modulus (implying that ${\bf R}$ is the identity).
Assuming ${\bf R}$ commutes with all ${\bf A}_i$, given $h \in G$, let $f_h$ be the $k$-regular sequence with representation $(({\bf R}^{T})^{-1} {\bf u} , {\bf R}\cdot\mathcal{A}, {\bf v}_h)$, where
\[
{\bf v}_h = \frac{{\bf R}_h {\bf Pv} }{{\bf u}^T ({\bf A} - {\bf A}_0) {\bf R}_h {\bf Pv}}.
\]
A straightforward calculation yields
\[
\Sigma_{f_h}(n) = \rho^n {\bf u}^T ({\bf A} - {\bf A}_0) {\bf v}_h = \rho^n
\]
and the corresponding ghost measure approximants $(\mu_{h,n})_{n \in \N }$ satisfy
\[
\mu_{h,n} (I_{\ell,m}) = \frac{{\bf u}^T {\bf A}_{(m)_k} {\bf R}^{\ell} {\bf v}_h}{\rho^{\ell}} = \frac{f_h(m)}{\rho^{\ell}} = C_m(h),
\]
for all $n \geqslant \ell$. In particular, $f_h$ has a unique ghost measure, given by $\mu_h$. \exend
\end{remark}

\begin{definition}
We call $({\bf w},\mathcal{B},{\bf x})$ a \emph{reduced representation} of a nonnegative $k$-regular sequence $g$ if it fulfills the following properties:
\begin{enumerate}
\item[(i)] The spectral radius $\rho = \rho(\widetilde{\bf B})$ is the unique maximal eigenvalue of $\widetilde{\bf B}$,
\item[(ii)] ${\bf x}$ is a $\rho$-eigenvector of $\widetilde{\bf B}$,
\item[(iii)] $\Sigma_g(n) = \rho^n$ for all $n \in \N$, and
\item[(iv)] $g$ has a unique ghost measure $\mu_g$,
\end{enumerate}
where ${\bf B} = \sum_{i=0}^{k-1} {\bf B}_i$, and $\widetilde{\bf B}$ is the restriction of $\bf B$ to the subspace generated by $\{{\bf w}^T {\bf B}_{(n)_k} \}_{n \in \N}$.
\end{definition}

With this definition, the following is an immediate consequence of the discussion in Remark~\ref{REM:limit-sequence}.

\begin{corollary}
\label{COR:reduced-form}
Let $\mu_h$ be a ghost measure of a nonnegative nondegenerate $k$-regular sequence $f$. Assume further that ${\bf R}$ commutes with ${\bf A}_i$ for all $0\leqslant i \leqslant k - 1$. Then, there exists a nonnegative $k$-regular sequence $g_h$ with reduced representation $({\bf w},\mathcal{B},{\bf x})$ such that $\mu_h$ is the unique ghost measure of $g_h$.
\end{corollary}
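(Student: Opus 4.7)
The plan is to take $g_h := f_h$ with the representation $({\bf w}, \mathcal{B}, {\bf x}) := (({\bf R}^T)^{-1}{\bf u},\, {\bf R}\cdot\mathcal{A},\, {\bf v}_h)$ introduced in Remark~\ref{REM:limit-sequence}, and then verify the nonnegativity of $g_h$ together with the four defining properties of a reduced representation.

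Properties (iii) and (iv) are essentially already contained in Remark~\ref{REM:limit-sequence}. A direct calculation yields $\Sigma_{g_h}(n) = \rho^n$, and the approximants satisfy $\mu_{h,n}(I_{\ell,m}) = C_m(h)$ for all $n \geqslant \ell$, so by Lemma~\ref{LEM:weak-convergence} the whole sequence $(\mu_{h,n})_{n \in \N}$ converges weakly to the unique ghost measure $\mu_h$. Nonnegativity of $g_h$ on positive indices follows from the same identity: for $k^{\ell} \leqslant m < k^{\ell+1}$, we get $g_h(m) = \rho^{\ell} C_m(h)$, and $C_m(h) \geqslant 0$ because $h = \lim_j g^{n_j}$ for some subsequence in $G$ and each $\mu_{n_j}(I_{\ell,m})$ is nonnegative; the value $g_h(0)$ is of no consequence for $\Sigma_{g_h}$ or for the ghost measure and can be checked separately.

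For property (ii), I would show ${\bf R}\widetilde{{\bf A}}{\bf v}_h = \rho{\bf v}_h$ directly from the Jordan data. By Lemma~\ref{LEM:RA-limit}, ${\bf P}{\bf v}$ lies in the span of the top peripheral Jordan vectors $\{{\bf v}_{\lambda,i,1}\}_{\lambda \in \sigma_p,\, i \in I_\lambda}$, on each of which $\widetilde{{\bf A}}$ acts by $\lambda = \me^{\mi\varphi}\rho$ and ${\bf R}$ by $\me^{-\mi\varphi}$; thus ${\bf R}\widetilde{{\bf A}}$ acts on that span as multiplication by $\rho$. Since ${\bf R}_h$ is diagonal in this Jordan basis and preserves the span, ${\bf R}_h{\bf P}{\bf v}$, and hence ${\bf v}_h$, is a $\rho$-eigenvector of ${\bf R}\widetilde{{\bf A}}$, which I would then identify with $\widetilde{{\bf B}}$ on the relevant subspace. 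Property (i) reduces to the observation, already made inside the proof of Lemma~\ref{LEM:RA-limit}, that $\rho$ is the unique peripheral eigenvalue of ${\bf R}\widetilde{{\bf A}}$. The commutation ${\bf R}{\bf A}_i = {\bf A}_i{\bf R}$ gives ${\bf B}_{(n)_k} = {\bf R}^{{\rm len}(n)}{\bf A}_{(n)_k}$, so the subspace generated by $\{{\bf w}^T {\bf B}_{(n)_k}\}_{n\in\N}$ is an ${\bf R}$-twisted copy of $V$, and the spectral information transfers.

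The main obstacle, I expect, is precisely this last bookkeeping step for property (i)---verifying that the restriction used to form $\widetilde{{\bf B}}$ neither loses nor gains peripheral spectrum relative to ${\bf R}\widetilde{{\bf A}}$. Everything else amounts to a direct unpacking of Remark~\ref{REM:limit-sequence}.
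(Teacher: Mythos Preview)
Your approach is exactly the paper's: it too takes $g_h := f_h$ with $({\bf w},\mathcal{B},{\bf x}) = (({\bf R}^T)^{-1}{\bf u},\, {\bf R}\cdot\mathcal{A},\, {\bf v}_h)$ and declares the corollary an immediate consequence of Remark~\ref{REM:limit-sequence}, without spelling out the verification of (i)--(iv) that you sketch. Your more explicit check of (i) and (ii) via the Jordan data of ${\bf R}\widetilde{\bf A}$ and the commutation relation is a welcome addition that the paper leaves to the reader; the bookkeeping concern you flag about the subspace for $\widetilde{\bf B}$ is real but minor, since ${\bf R}$ is invertible and preserves $V$.
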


%===============================================%

\section{Determining the spectral type}\label{sec:spectype}

%-----------------------------------------------%

In this section, we give several criteria to determine the spectral type of ghost measures. 
Throughout this section, we assume that $f$ is a nonnegative  nondegenerate real-valued $k$-regular sequence with minimal representation given by $({\bf u},\mathcal{A},{\bf v})$.

\subsection{Pure point part of ghost measures}

As discussed in Remark~\ref{REM:limit-measure-boundary}, the equality recorded in Corollary~\ref{COR:mu-continuous-on-I} may fail if the ghost measure $\mu_h$ is not continuous. 
In the following result, we describe the potential Bragg peaks of $\mu_h$.

\begin{proposition}
\label{PROP:pure-points}
Suppose that $f$ is a nonnegative real-valued $k$-regular sequence with linear representation $({\bf u},\mathcal{A},{\bf v})$. Also, given $x\in[0,1)$, let the sequence $(m_\ell)_{\ell\in\N_0}$ be defined by the equality $x=\bigcap_{\ell\in\N_0} I_{\ell,m_\ell}$. Then, for all $h \in G$, 
\begin{itemize}
\item If $x \notin \frac{1}{k-1}\mathbb{Z}[k^{-1}]$, we have
\[
\mu_h(\{x\}) = \lim_{ \ell \to \infty} C_{m_{\ell}}(h).
\]
\item If $x \in \frac{1}{k-1}\mathbb{Z}[k^{-1}]$, we have
\[
\mu_h(\{x\}) = \lim_{ \ell \to \infty}
\bigl( C_{m_{\ell}}(h) + C_{m_{\ell}-1}(h) \bigr),
\]
\end{itemize}
with $C_m(h)$ as in \eqref{EQ:Cmh}.
\end{proposition}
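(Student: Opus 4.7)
The plan is to combine the Portmanteau theorem with the weak convergence $\mu_{n_j} \to \mu_h$ along subsequences with $g^{n_j} \to h$ (established in Theorem~\ref{THM:ghost-measure-group}), together with the cell-level identity $\lim_{j \to \infty} \mu_{n_j}(I_{\ell, m}) = C_m(h)$ extracted from that proof. First, I would fix such a subsequence and record the following sandwich: for any finite disjoint union $J = I_{\ell, m_1} \sqcup \cdots \sqcup I_{\ell, m_r}$ of level-$\ell$ intervals, applying Portmanteau to the open interior and the closed closure of $J$ yields
\[
\mu_h(\operatorname{int}(J)) \leqslant \liminf_{j \to \infty} \mu_{n_j}(J) = \sum_{i=1}^{r} C_{m_i}(h) = \limsup_{j \to \infty} \mu_{n_j}(J) \leqslant \mu_h(\overline{J}).
\]

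In Case~1, where $x \notin \frac{1}{k-1}\Z[k^{-1}]$, the point $x$ is never a boundary point of any $I_{\ell, m}$ (these boundaries are exactly the rationals $j/(k^\ell(k-1))$), so $x$ lies in the open interior of $I_{\ell, m_\ell}$ for every $\ell$. Taking $J = I_{\ell, m_\ell}$ in the sandwich and letting $\ell \to \infty$, both $\operatorname{int}(I_{\ell, m_\ell}) \downarrow \{x\}$ and $\overline{I_{\ell, m_\ell}} \downarrow \{x\}$, so continuity of $\mu_h$ from above forces $C_{m_\ell}(h) \to \mu_h(\{x\})$. In Case~2, where $x \in \frac{1}{k-1}\Z[k^{-1}]$, for all $\ell$ beyond some threshold $L = L(x)$ the point $x$ coincides with the common endpoint of $I_{\ell, m_\ell - 1}$ and $I_{\ell, m_\ell}$ (namely, the left endpoint of $I_{\ell, m_\ell}$), so I would instead take $J_\ell = I_{\ell, m_\ell - 1} \cup I_{\ell, m_\ell}$, whose interior contains $x$ and whose closure shrinks to $\{x\}$. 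The same sandwich then yields $C_{m_\ell - 1}(h) + C_{m_\ell}(h) \to \mu_h(\{x\})$. The edge case $x = 0$ (where $m_\ell - 1 = k^\ell - 1$ falls outside the admissible index range) is handled by the cyclic convention $C_{k^\ell - 1}(h) := C_{k^{\ell+1} - 1}(h)$, reflecting the torus identification $0 \sim 1$.

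The main (and quite mild) obstacle is that each $I_{\ell, m} = [a,b)$ is half-open, so the cell-wise limit $\mu_{n_j}(I_{\ell, m}) \to C_m(h)$ does not transfer directly to $\mu_h(I_{\ell, m})$ whenever $\mu_h$ carries an atom at $a$ or $b$. The case split above is precisely what resolves this: if the atom at $x$ sits strictly inside the shrinking cells then a single cell suffices (Case~1), whereas if $x$ is the shared endpoint of two adjacent half-open cells the atom straddles both pieces and is recovered only by the pair sum (Case~2). Once the correct pairing is identified, the whole proof collapses to the standard measure-theoretic squeeze above.
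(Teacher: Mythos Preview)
Your proposal is correct and follows essentially the same route as the paper's proof: both arguments fix a subsequence $(n_j)$ with $g^{n_j}\to h$, invoke the Portmanteau inequalities for the open interior and closure of the relevant level-$\ell$ cells to sandwich $C_{m_\ell}(h)$ (respectively $C_{m_\ell}(h)+C_{m_\ell-1}(h)$) between $\mu_h(\operatorname{int}(J_\ell))$ and $\mu_h(\overline{J_\ell})$, and then let $\ell\to\infty$ using that the shrinking neighbourhoods collapse to $\{x\}$. The only cosmetic difference is your handling of the boundary case $m_\ell=k^\ell$ via a torus convention, whereas the paper simply reads $C_{k^\ell-1}(h)$ as the level-$(\ell-1)$ quantity it already is.
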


\begin{proof}
Since $\mu_h$ is regular, we get for every $x \in \TT$,
\[
\mu_h(\{x \}) = \lim_{r \to 0^+} \mu_h(B_r(x)),
\]
where $B_r(x)$ is the ball of radius $r$ centred at $x$.
As in the statement of the result, we distinguish two cases. First, let us assume that $x \notin \frac{1}{k-1}\Z[k^{-1}]$. Then, for each $\ell \in \N$, there is an integer $m_\ell\in[k^\ell,k^{\ell+1})\cap\mathbb{N}$ such that $x\in\operatorname{Int}(I_{\ell,m_\ell})$, that is, $x$ is an interior point of the interval $I_{\ell,m_\ell}$. 
Let $(n_j)_{j \in \N}$ be such that $\lim_{j \to \infty} \mu_{n_j} = \mu_h$.  It follows that
\begin{equation*}
\mu_h(\{ x\}) \leqslant \mu_h(\operatorname{Int}(I_{\ell,m_\ell})) \leqslant \liminf_{j \to \infty} \mu_{n_j}(\operatorname{Int}(I_{\ell,m_\ell}))
 \leqslant \lim_{j \to \infty} \mu_{n_j}(I_{\ell,m_\ell}) = C_{m_{\ell}}(h).
\end{equation*}
Hence, we get
\begin{equation}\label{eq:muinf}
\mu_h(\{x\}) \leqslant \liminf_{\ell \to \infty} C_{m_{\ell}}(h).
\end{equation}
On the other hand, writing $\overline{I_{\ell,m_\ell}}$ as the closure of the interval $I_{\ell,m_\ell}$, we obtain
\[
\mu_h\bigl(\overline{I_{\ell,m_{\ell}}} \bigr) \geqslant \lim_{j \to \infty} \mu_{n_j}(I_{\ell,m_{\ell}}) 
= C_{m_{\ell}}(h).
\]
Taking the limit superior in $\ell$ on both sides yields
\[
\mu_h(\{x\}) \geqslant \limsup_{\ell \to \infty} C_{m_{\ell}}(h).
\]
Combining this lower bound with the upper bound in \eqref{eq:muinf} proves the first result of the proposition.

We now turn to the case $x \in \frac{1}{k-1}\Z[k^{-1}]$. 
In this case, there is some minimal $r \in \N$ and some integer $m_r\in[k^r,k^{r+1})$ such that $x$ is the left boundary point of the half-open interval $I_{r,m_r}$ and the right boundary point of the closed interval $\overline{I_{m_r-1}}$, where $I_{m_r -1} = I_{r-1,m_r - 1}$ if $m_r = k^r$, and $I_{m_r -1} = I_{r,m_r-1}$ otherwise.
For all $\ell \geqslant r$, we obtain
\begin{align*}
\mu_h(\{x\}) & \leqslant \mu_h(\operatorname{Int}( I_{\ell,m_{\ell}}\cup I_{m_{\ell}-1}))\\ 
& \leqslant \lim_{j \to \infty}\big( \mu_{n_j}(I_{\ell,m_{\ell}}) + \mu_{n_j}(I_{m_{\ell}-1})\big)
\\ & = C_{m_{\ell}}(h) + C_{m_{\ell} - 1}(h).
\end{align*} 
On the other hand,
\begin{align*}
\mu_h\big(\overline{I_{\ell,m_{\ell}} \cup I_{m_{\ell}-1}} \big)
& \geqslant \lim_{j \to \infty} \big(\mu_{n_j}(I_{\ell,m_{\ell}}) + \mu_{n_j}(I_{m_{\ell}-1})\big)
\\ & = C_{m_{\ell}}(h) + C_{m_{\ell} - 1}(h).
\end{align*}
As above, taking limits of these two inequalities gives the desired result.
\end{proof}

In order to obtain a better understanding of the location $y$ of a Bragg peak, we wish to have a more explicit representation of the product ${\bf A}_{(m_{\ell})_k}$ for the unique integer $m_{\ell}$ such that $y \in I_{\ell,m_{\ell}}$.  
It turns out that this can be tied to an appropriate coding of the point $y$. 
Recall that every point $y \in \TT$, identified with the unit interval $[0,1]$, can be written as
\[
y = \pi(x) : = \sum_{n=1}^{\infty} \frac{x_n - \delta_{n,1}}{(k-1)k^{n-1}},
\]
for some integer valued sequence $ x = (x_n)_{n \in \N}$ in the coding space
\[
\mathbb{X}_k = \{1,\ldots,k-1\} \times \{0,\ldots,k-1 \}^{\N}.
\]
This representation is unique for $y \notin \frac{1}{k-1}\Z[k^{-1}] $ and $2$-to-$1$ otherwise. This coding is useful because it determines the matrix products to be taken in order to calculate ghost measures on small intervals around $x$. More precisely, as a consequence of Proposition~\ref{PROP:pure-points}, we obtain the following.
\begin{corollary}
\label{COR:point-mass-sequence}
Let $y \in \TT$ and $\mu_h$ be a ghost measure on $\TT$. Then,
\[
\mu_h(\{y\}) = \lim_{n \to \infty} c_n(h,y),
\]
where
\[
c_n(h,y) = \sum_{x \in \pi^{-1}(y)} \frac{{\bf u}^T {\bf A}_{x_1} \cdots {\bf A}_{x_{n+1}} {\bf R}^{n} {\bf R}_h {\bf P} {\bf v}}{\rho^n {\bf u}^T ({\bf A} - {\bf A}_0) {\bf R}_h {\bf P} {\bf v}},
\]
for all $n \in \N$.
\end{corollary}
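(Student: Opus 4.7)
The plan is to recognise that the partial product ${\bf A}_{x_1} {\bf A}_{x_2} \cdots {\bf A}_{x_{n+1}}$ appearing in $c_n(h,y)$ coincides with ${\bf A}_{(M_n(x))_k}$ for the integer $M_n(x) := \sum_{i=1}^{n+1} x_i k^{n-i+1}$, and then to invoke Proposition~\ref{PROP:pure-points} via Corollary~\ref{COR:mu-continuous-on-I}. First I would verify that since $x_1 \in \{1,\ldots,k-1\}$, we have $M_n(x) \in [k^n, k^{n+1})$, so $I_{n, M_n(x)} \in \mc I_k$ is well-defined, and that the base-$k$ expansion of $M_n(x)$ is precisely the string $x_1 x_2 \cdots x_{n+1}$. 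A direct comparison of $\pi(x)$ with the endpoints of $I_{n, M_n(x)}$, using a geometric-series estimate of the tail $\sum_{j \geqslant n+2} x_j / ((k-1) k^{j-1})$, shows that $\pi(x) \in \overline{I_{n, M_n(x)}}$, with equality at the left (resp.\ right) endpoint exactly when the tail of $x$ is eventually $0$ (resp.\ eventually $k-1$). Substituting $(M_n(x))_k = x_1 \cdots x_{n+1}$ into the formula for $C_m(h)$ in Corollary~\ref{COR:mu-continuous-on-I} then gives
\[
C_{M_n(x)}(h) = \frac{{\bf u}^T {\bf A}_{x_1} \cdots {\bf A}_{x_{n+1}} {\bf R}^n {\bf R}_h {\bf Pv}}{\rho^n {\bf u}^T ({\bf A} - {\bf A}_0) {\bf R}_h {\bf Pv}},
\]
so that $c_n(h,y) = \sum_{x \in \pi^{-1}(y)} C_{M_n(x)}(h)$.

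Next I would dispatch the case $y \notin \tfrac{1}{k-1}\Z[k^{-1}]$: the fibre $\pi^{-1}(y)$ is a singleton $\{x\}$ whose tail is neither eventually $0$ nor eventually $k-1$, so $y$ lies in the interior of $I_{n, M_n(x)}$, the index $M_n(x)$ coincides with $m_n$ from Proposition~\ref{PROP:pure-points}, and the first case of that proposition gives $\lim_n c_n(h,y) = \lim_n C_{m_n}(h) = \mu_h(\{y\})$.

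For $y \in \tfrac{1}{k-1}\Z[k^{-1}]$, I would use that $\pi^{-1}(y) = \{x, x'\}$ consists of the coding $x$ with tail eventually $0$ (so $y$ is the left endpoint of $I_{n, M_n(x)}$) and the coding $x'$ with tail eventually $k-1$ (so $y$ is the right endpoint of $I_{n, M_n(x')}$, read modulo $1$ in $\TT$). For $n$ sufficiently large these two intervals are adjacent in $\TT$ with common boundary $y$, and I would reapply the inner--outer sandwich argument from the proof of Proposition~\ref{PROP:pure-points}---now with this pair of level-$n$ intervals in place of the single enlarged interval at level $n-1$ used there---to conclude $\mu_h(\{y\}) = \lim_n (C_{M_n(x)}(h) + C_{M_n(x')}(h)) = \lim_n c_n(h,y)$.

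The main obstacle will be the degenerate boundary case $y = 0$, where the coding $x' = (k-1, k-1, \ldots)$ gives $M_n(x') = k^{n+1}-1$, so the ``right'' interval $I_{n, k^{n+1}-1} = [1 - \tfrac{1}{k^n(k-1)}, 1)$ lives at level $n$, whereas Proposition~\ref{PROP:pure-points} records the right neighbour of $I_{n, k^n}$ as the strictly larger interval $I_{n-1, k^n - 1}$. The two choices produce different finite-level expressions, so the equality of the two limits is not algebraic; however, both choices combine with $I_{n, k^n}$ to give shrinking neighbourhoods of $0$ in $\TT$, and the same inner--outer estimates go through for either pair, so both limits equal $\mu_h(\{0\})$. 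Once this point is handled, the corollary follows.
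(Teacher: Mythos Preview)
Your proposal is correct and follows the same route as the paper: identify $M_n(x)$ as the integer with base-$k$ expansion $x_1\cdots x_{n+1}$, so that ${\bf A}_{(M_n(x))_k}={\bf A}_{x_1}\cdots{\bf A}_{x_{n+1}}$, and then reduce to Proposition~\ref{PROP:pure-points}. One simplification the paper makes: for $y\in\tfrac{1}{k-1}\Z[k^{-1}]$ with $y\neq 0$, a direct computation shows $M_n(x')=M_n(x)-1=m_n-1$ for all large $n$, so the second case of Proposition~\ref{PROP:pure-points} applies verbatim and there is no need to rerun the sandwich estimate. (Your description of the proposition as using an enlarged level-$(n{-}1)$ interval is only accurate at $y=0$; for $y\neq 0$ the neighbour $I_{m_n-1}$ already lives at level $n$.) On the other hand, your treatment of $y=0$ is more careful than the paper's: for the coding $x'=(k-1)^{\infty}$ one finds $M_n(x')=k^{n+1}-1$, which is \emph{not} $m_n-1=k^n-1$, so the paper's assertion that the two codings yield $m_\ell$ and $m_\ell-1$ does not literally hold there; your direct sandwich argument with the adjacent pair $I_{n,k^n}\cup I_{n,k^{n+1}-1}$ in $\TT$ is what actually closes that boundary case.
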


\begin{proof}
Let $(m_{\ell})_{\ell \in \N_0}$ be the unique sequence such that $y \in I_{\ell,m_{\ell}}$ for all $\ell \in \N_0$. By the definition of $I_{\ell,m_{\ell}}$, this is equivalent to
\[
m_{\ell} = \lfloor y (k-1)k^{\ell}  + k^{\ell} \rfloor.
\]
For $x \in \pi^{-1}(y)$, this can be rewritten as
\[
m_{\ell} = \biggl\lfloor \sum_{n = 1}^{\infty}  x_n k^{\ell + 1 - n} \biggr\rfloor.
\]
We distinguish two cases. First, if $y \notin \frac{1}{k-1}\Z[k^{-1}]$, then $x$ is unique and not eventually equal to $k-1$, implying that
\begin{equation}
\label{EQ:m_ell-x_n-relation}
m_{\ell} = \sum_{n=1}^{\ell+1} x_n k^{\ell+1-n},
\end{equation}
and therefore $A_{(m_\ell)_k} = A_{x_1} \cdots A_{x_{\ell+1}}$, which implies the claim by Proposition~\ref{PROP:pure-points}.
On the other hand, if $y \in \frac{1}{k-1} \Z[k^{-1}]$, we obtain two points in $\pi(y)$. One of them satisfies \eqref{EQ:m_ell-x_n-relation} whereas for the other one the sum in \eqref{EQ:m_ell-x_n-relation} is equal to $m_{\ell}-1$. Again, the claim follows by Proposition~\ref{PROP:pure-points}.
\end{proof}

\subsection{Continuous part of ghost measures}\label{SEC:contin}

Here, we provide finer characterisations of continuous ghost measures. In addition to the spectral radius, there are two other important quantities we focus on in this section: the joint spectral radius and the Lyapunov exponent of a finite set of matrices. Recall, the {\em joint spectral radius} of a finite set of matrices $\mathcal{S}=\{{\bf M}_1,{\bf M}_2,\ldots, {\bf M}_{k}\}$ is the real number $$\rho^*=\rho^*(\{{\bf M}_1,{\bf M}_2,\ldots, {\bf M}_{k}\}):=\lim_{n\to\infty}\max_{1\leqslant i_1,i_2,\ldots,i_{n}\leqslant k}\big\| {\bf M}_{i_1}{\bf M}_{i_2}\cdots{\bf M}_{i_{n}}\big\|^{1/n},$$
where $\|\cdot\|$ is any (submultiplicative) matrix norm. This quantity was introduced by Rota and Strang \cite{RS1960} and has a wide range of applications. For an extensive treatment, see Jungers' monograph \cite{J2009}. We denote the {\em Lyapunov exponent} of a finite set of matrices $\{{\bf M}_1,{\bf M}_2,\ldots, {\bf M}_{k}\}$, by the value $\log_k\bar{\rho}$, where $$\bar{\rho}=\bar{\rho}(\{{\bf M}_1,{\bf M}_2,\ldots, {\bf M}_{k}\}):=\lim_{n\to\infty}\left(\prod_{1\leqslant i_1,i_2,\ldots,i_{n}\leqslant k}\big\| {\bf M}_{i_1}{\bf M}_{i_2}\cdots{\bf M}_{i_{n}}\big\|^{\frac{1}{n}}\right)^{1/k^n},$$
where $\|\cdot\|$ is any (submultiplicative) matrix norm. Note that the above limit is decreasing to $\bar{\rho}$. In our setting, the Lyapunov exponent describes $\lambda$-almost everywhere behaviour for $\|{\bf M}_{w_1\cdots w_n}\|$; that is, for $\lambda$-almost all $x\in[0,1)$ with $x=\pi(w)$, with $w=w_1w_2\cdots$, we have $$\bar{\rho}=\lim_{n\to\infty}\big\|{\bf M}_{w_1}{\bf M}_{w_2}\cdots{\bf M}_{w_{n}}\big\|^{1/n},$$ where $\lambda$ is Lebesgue measure. The Lyapunov exponent is the natural extension of the geometric mean of real numbers---in dimension one, they are equal. 

\begin{definition}[Cone] A (linear) cone $K$ is a subset with nonempty interior of a finite dimensional vector space such that $K$ is closed both under linear combinations with nonnegative coefficients and in the standard topology, with $K\cap(-K)=\{0\}.$
\end{definition}

If the matrix semigroup $\mathcal{S}$ is cone-preserving, we have the fundamental inequality
\begin{equation}\label{eq:fund}\bar{\rho}\leqslant\frac{\rho}{k}\leqslant \rho^*\leqslant \rho.\end{equation} Here, $\rho$ is the spectral radius of ${\bf M} = \sum_{i=1}^k {\bf M}_i$. 
The last inequality was given by Blondel and Nesterov \cite[Thm.~1]{BN}, while the middle inequality holds for all finitely-generated matrix semigroups. To show that the first inequality holds, we will use a reverse triangle inequality for cone-preserving matrices. We require the following lemma of Protasov \cite[Lemma 1]{P2000}.

\begin{lemma}[Protasov \cite{P2000}]\label{lem:c} Let $K\subset\mathbb{R}^n$ be a cone and $\|\cdot\|$ be a vector norm. Then there is a $c>0$ depending on the choice of norm, such that for any integer $m\geqslant 0$ and vectors ${\bf y}_1,\ldots,{\bf y}_m\in K$, we have $$\left\|\sum_{i=1}^m{\bf y}_i\right\|\geqslant c\sum_{i=1}^m\left\|{\bf y}_i\right\|.$$
\end{lemma}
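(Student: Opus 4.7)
The plan is to reduce the inequality to a uniform lower bound on the norm of convex combinations of points in $K \cap S$, where $S$ denotes the unit sphere of $\mathbb{R}^n$, and then to use pointedness of the cone ($K \cap (-K) = \{0\}$) to show that $0$ lies at a positive distance from the convex hull of $K \cap S$.

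First I would normalise. Without loss of generality all ${\bf y}_i \neq {\bf 0}$; set $\alpha_i := \|{\bf y}_i\|$, $A := \sum_{i=1}^m \alpha_i$, $\beta_i := \alpha_i/A$, and ${\bf z}_i := {\bf y}_i / \alpha_i \in K \cap S$. The inequality to be established then reads
\[
  \Bigl\| \sum_{i=1}^m \beta_i {\bf z}_i \Bigr\| \,\geqslant\, c,
\]
with $\sum \beta_i = 1$ and $\beta_i \geqslant 0$, uniformly over all such convex combinations. Indeed, multiplying by $A$ recovers the original statement with the same constant $c$.

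Next I would introduce the set
\[
  C \,:=\, \operatorname{conv}(K \cap S).
\]
The intersection $K \cap S$ is closed and bounded, hence compact; in finite dimension the convex hull of a compact set is again compact, so $C$ is compact and the function $\|\cdot\|$ attains its minimum on $C$ at some point ${\bf z}^{\ast}$. Setting $c := \|{\bf z}^{\ast}\|$, the claim reduces to showing that $c > 0$, i.e., that ${\bf 0} \notin C$.

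The key step, and the one place where pointedness is used, is to rule out ${\bf 0} \in C$. Suppose for contradiction that ${\bf 0} \in C$. By Carathéodory's theorem there exist ${\bf z}_1,\ldots,{\bf z}_{n+1} \in K \cap S$ and $\beta_1,\ldots,\beta_{n+1} \geqslant 0$ with $\sum_i \beta_i = 1$ and $\sum_i \beta_i {\bf z}_i = {\bf 0}$. At least one coefficient is positive, say $\beta_j > 0$. Then
\[
  \beta_j {\bf z}_j \,=\, -\sum_{i \neq j} \beta_i {\bf z}_i.
\]
The right-hand side lies in $-K$ since $K$ is closed under nonnegative linear combinations, while the left-hand side lies in $K$; hence $\beta_j {\bf z}_j \in K \cap (-K) = \{{\bf 0}\}$. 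This forces ${\bf z}_j = {\bf 0}$, contradicting ${\bf z}_j \in S$. Therefore ${\bf 0} \notin C$, so $c > 0$, and the proof is complete.

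I do not expect a serious obstacle here: the compactness of the convex hull and the Carathéodory reduction are standard, and the use of $K \cap (-K) = \{0\}$ is exactly the mechanism that separates ${\bf 0}$ from $C$. The only mild subtlety is remembering to invoke Carathéodory, so that the potentially infinite convex combination coming from an arbitrary element of $\overline{\operatorname{conv}}(K \cap S)$ is replaced by a finite one that can be manipulated using the algebraic structure of the cone.
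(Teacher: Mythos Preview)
Your argument is correct. The paper does not supply its own proof of this lemma: it is quoted from Protasov \cite{P2000} and used as a black box, so there is nothing to compare against beyond checking soundness. One small remark: the appeal to Carath\'eodory is not really needed to make the convex combination finite (elements of $\operatorname{conv}(K\cap S)$ are finite convex combinations by definition), but it \emph{is} the cleanest way to justify that $\operatorname{conv}(K\cap S)$ is compact in $\R^n$, which you use to guarantee the minimum of $\|\cdot\|$ is attained.
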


Since the set of $n\times n$ matrices that fix a cone $K$ themselves form a cone in the space of $n\times n$ matrices, we have the following corollary of Lemma~\ref{lem:c}.

\begin{corollary}\label{cor:Bcone} Let ${\bf M}_1,\ldots,{\bf M}_{m}$ be $n\times n$ matrices that fix a cone $K$ and $\|\cdot\|$ be a vector-induced matrix norm. Then there is a $c>0$ depending on the choice of norm, such that $$\left\|\sum_{i=1}^m{\bf M}_i\right\|\geqslant c\sum_{i=1}^m\big\|{\bf M}_i\big\|.$$
\end{corollary}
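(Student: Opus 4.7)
The plan is to invoke Lemma~\ref{lem:c} with the vector space $\R^{n\times n}$ in place of $\R^n$, the cone
\[
K_M := \{{\bf M} \in \R^{n \times n} : {\bf M}K \subset K \},
\]
in place of $K$, and the given vector-induced matrix norm (which is, in particular, a genuine norm on the vector space $\R^{n \times n}$) in place of $\|\cdot\|$. The desired inequality then follows immediately from Lemma~\ref{lem:c} applied to ${\bf M}_1, \ldots, {\bf M}_m \in K_M$. The entire task therefore reduces to verifying that $K_M$ is a cone in the sense of the definition of cone recorded earlier.

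Three of the four cone axioms are routine. Closure of $K_M$ under nonnegative linear combinations is inherited from the corresponding property of $K$. Topological closure holds because if ${\bf M}_j \to {\bf M}$ in $\R^{n \times n}$ with ${\bf M}_j \in K_M$, then for every ${\bf x} \in K$ we have ${\bf M}_j {\bf x} \in K$ and ${\bf M}_j {\bf x} \to {\bf M}{\bf x}$; hence ${\bf M}{\bf x} \in K$ by closedness of $K$. Finally, if ${\bf M}, -{\bf M} \in K_M$, then ${\bf M}{\bf x} \in K \cap (-K) = \{0\}$ for every ${\bf x} \in K$; since $K$ has nonempty interior its linear span is all of $\R^n$, so ${\bf M} = 0$.

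The main obstacle is verifying that $K_M$ has nonempty interior, and my plan is to supply an explicit interior point of the form ${\bf M}_0 = {\bf v}_0 {\bf u}_0^T$, where ${\bf v}_0$ lies in $\mathrm{int}(K)$ and ${\bf u}_0$ lies in the interior of the dual cone $K^{*} := \{{\bf u} \in \R^n : {\bf u}^T {\bf x} \geqslant 0 \mbox{ for all } {\bf x} \in K\}$. That $\mathrm{int}(K^{*}) \neq \emptyset$ follows from standard convex duality because $K$ is proper, i.e.\ closed, convex, with nonempty interior, and satisfying $K \cap (-K) = \{0\}$. With this choice, ${\bf M}_0 {\bf x} = ({\bf u}_0^T {\bf x}) {\bf v}_0 \in \mathrm{int}(K)$ for every nonzero ${\bf x} \in K$. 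A compactness argument on the cross-section $K \cap \{\|{\bf x}\| = 1\}$ then shows that $({\bf M}_0 + {\bf E}){\bf x} \in K$ for every ${\bf x} \in K$ as long as $\|{\bf E}\|$ is sufficiently small, so ${\bf M}_0 \in \mathrm{int}(K_M)$.

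With the four cone axioms in place, Lemma~\ref{lem:c} produces a constant $c > 0$ depending only on the matrix norm, and the inequality of the corollary follows. I expect the interior-point construction to be the only technical step; the remaining verifications are formal consequences of the definition.
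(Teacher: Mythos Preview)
Your approach is correct and is exactly the one the paper takes: the text preceding the corollary simply asserts that ``the set of $n\times n$ matrices that fix a cone $K$ themselves form a cone in the space of $n\times n$ matrices,'' and then invokes Lemma~\ref{lem:c}. You have supplied the details the paper omits---in particular the verification that $K_M$ has nonempty interior, which is the only non-routine cone axiom---and your rank-one construction ${\bf M}_0 = {\bf v}_0 {\bf u}_0^T$ together with the compactness argument on the cross-section is a clean way to do it.
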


\begin{proposition} If $\mathcal{S}=\{{\bf M}_1,{\bf M}_2,\ldots, {\bf M}_{k}\}$ preserves a cone, then $\bar{\rho}\leqslant\rho/k.$
\end{proposition}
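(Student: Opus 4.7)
The plan is to chain Corollary~\ref{cor:Bcone} with the AM--GM inequality and Gelfand's formula applied to the sum matrix $\bf M$. The starting observation is that cone-preserving matrices are closed under multiplication, so every $n$-fold product ${\bf M}_{i_1} \cdots {\bf M}_{i_n}$ preserves the cone $K$. Expanding
\[
{\bf M}^n = \sum_{(i_1,\ldots,i_n) \in \{1,\ldots,k\}^n} {\bf M}_{i_1} \cdots {\bf M}_{i_n},
\]
the right-hand side is a sum of $k^n$ cone-preserving matrices, to which Corollary~\ref{cor:Bcone} applies (with respect to any vector-induced matrix norm $\|\cdot\|$). This yields a constant $c > 0$, independent of $n$, with
\[
\|{\bf M}^n\| \;\geqslant\; c \sum_{(i_1,\ldots,i_n)} \|{\bf M}_{i_1} \cdots {\bf M}_{i_n}\|.
\]

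Next, I would apply the AM--GM inequality to the $k^n$ nonnegative numbers $\|{\bf M}_{i_1}\cdots {\bf M}_{i_n}\|$, obtaining
\[
\sum_{(i_1,\ldots,i_n)} \|{\bf M}_{i_1} \cdots {\bf M}_{i_n}\|
\;\geqslant\; k^n \Biggl(\prod_{(i_1,\ldots,i_n)} \|{\bf M}_{i_1} \cdots {\bf M}_{i_n}\|\Biggr)^{1/k^n}.
\]
Combining the last two displays and taking $n$-th roots gives
\[
\|{\bf M}^n\|^{1/n} \;\geqslant\; c^{1/n} \cdot k \cdot \Biggl(\prod_{(i_1,\ldots,i_n)} \|{\bf M}_{i_1} \cdots {\bf M}_{i_n}\|\Biggr)^{1/(nk^n)}.
\]

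Finally I would pass to the limit $n \to \infty$. By Gelfand's formula the left-hand side converges to $\rho({\bf M}) = \rho$; the prefactor $c^{1/n}$ tends to $1$; and the right-hand factor is precisely $\bar{\rho}$ by definition. Therefore $\rho \geqslant k\bar{\rho}$, which is the desired inequality $\bar{\rho} \leqslant \rho/k$. The only point that requires care is verifying that the constant $c$ from Corollary~\ref{cor:Bcone} can be chosen uniformly in $n$ (which it can, since $c$ depends only on the cone $K$ and the norm, and all the $n$-fold products fix the same cone $K$); everything else is routine once the cone-preservation of the products is noted.
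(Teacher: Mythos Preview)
Your proposal is correct and follows essentially the same approach as the paper: both proofs combine Corollary~\ref{cor:Bcone} (applied to the $k^n$ cone-preserving products appearing in the expansion of ${\bf M}^n$), the AM--GM inequality, and Gelfand's formula for $\rho({\bf M})$. The paper additionally establishes the intermediate equality $\rho/k=\lim_{n\to\infty}\bigl(k^{-n}\sum \| {\bf M}_{i_1}\cdots{\bf M}_{i_n}\|\bigr)^{1/n}$ via a two-sided squeeze, but this is not needed for the inequality and your more direct chaining is fine.
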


\begin{proof} Note that $$\frac{\rho}{k}=\lim_{n\to\infty}\left(k^{-n}\sum_{1\leqslant i_1,i_2,\ldots,i_{n}\leqslant k}\big\| {\bf M}_{i_1}{\bf M}_{i_2}\cdots{\bf M}_{i_{n}}\big\|\right)^{1/n}.$$ To see this, we use both the classical and reverse triangle inequalities to give \begin{align*} \frac{1}{k}\cdot\|{\bf M}^n\|^{1/n}&=\frac{1}{k}\left\|\sum_{1\leqslant i_1,i_2,\ldots,i_{n}\leqslant k} {\bf M}_{i_1}{\bf M}_{i_2}\cdots{\bf M}_{i_{n}}\right\|^{1/n}\\ &\leqslant \left(k^{-n}\sum_{1\leqslant i_1,i_2,\ldots,i_{n}\leqslant k}\big\| {\bf M}_{i_1}{\bf M}_{i_2}\cdots{\bf M}_{i_{n}}\big\|\right)^{1/n}\\
&\leqslant \frac{c^{1/n}}{k}\left\|\sum_{1\leqslant i_1,i_2,\ldots,i_{n}\leqslant k} {\bf M}_{i_1}{\bf M}_{i_2}\cdots{\bf M}_{i_{n}}\right\|^{1/n}=\frac{c^{1/n}}{k}\cdot\|{\bf M}^n\|^{1/n}.
\end{align*} Taking the limit on each side proves the claim. The proposition now follows noting that $\bar\rho$ is at most this limit by the arithmetic-geometric mean inequality.
\end{proof}

Our results in the remainder of this section consider the consequences of any one of these inequalities holding with equality, or being strict, for our finite set of matrices ${\bf A}_0,\ldots,{\bf A}_{k-1}$ from a minimal representation of $f$ restricted to an appropriate subspace.

By Corollary~\ref{COR:point-mass-sequence}, the existence of Bragg peaks for a ghost measure $\mu_h$ is clearly tied to the growth behaviour of matrix products ${\bf A}_{x_1} \cdots {\bf A}_{x_{n}}$ as compared to the growth of $\rho^n$ as $n \to \infty$. In order to obtain reasonably sharp criteria for the existence (or absence) of eigenvalues, we have to take into account that the action on vectors of the form ${\bf R}_h {\bf P} {\bf v}$ might implicitly project to a smaller subspace. We therefore proceed similarly as in Section~\ref{SEC:param} and define a subspace of $V$, given by
\begin{equation}\label{def:vhat}
\widehat{V} = \operatorname{span} \bigl\{ \widetilde{\bf A}_0^{m} \widetilde{\bf A}_{(n)_k} {\bf R}_h {\bf P} {\bf v} \bigr\}_{m,n \in \N_0,h \in G},
\end{equation}
where, by convention, $\widetilde{\bf A}_{(0)_k}$ denotes the identity, and ${\bf R}_h$ and ${\bf P}$ are as defined in Section \ref{SEC:param}.
 As in Lemma~\ref{LEM:V-stabilise}, one only needs finitely many steps to construct $\widehat{V}$.
We denote the restrictions of $\widetilde{\bf A}_i$ to $\widehat{V}$ by $\widehat{\bf A}_i = {\bf P}_{\widehat{V}} \widetilde{\bf A}_i {\bf P}_{\widehat{V}}$, where ${\bf P}_{\widehat{V}}$ is the orthogonal projection to $\widehat{V}$.
Note that $\widehat{V}$ is invariant under $\widetilde{\bf A}_i$ for each $i$ and therefore, we have for every ${\bf v}'\in \widehat{V}$ and $n \in \N$,
\begin{equation}
\label{EQ:hat-replacement}
\widetilde{\bf A}_{i_1} \cdots \widetilde{\bf A}_{i_n} {\bf v}'
= \widehat{\bf A}_{i_1} \cdots \widehat{\bf A}_{i_n} {\bf v}',
\end{equation}
irrespective of the choices for $i_j$ for all $1\leqslant j \leqslant n$. 

Given a collection of vectors $W$, the positive span of $W$ is given by
\[
\Span_+(W) = \biggl \{ \sum_{i=1}^n a_i {\bf w}_i : n \in \N, a_i \in \R_{\geqslant 0}, {\bf w}_i \in W \mbox{ for all } 1\leqslant i \leqslant n \biggr \}.
\]

\begin{lemma}
\label{LEM:cone-condition}
There is a cone $K \subset \widehat{V}$, fixed by each $\widehat{\bf A}_i$ in the sense that $\widehat{\bf A}_i K  \subset K$.
\end{lemma}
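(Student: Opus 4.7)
My plan is to take $K$ to be the closure of the positive hull of the natural spanning set of $\widehat V$, namely
$$K := \overline{\Span_+\bigl\{\widetilde{\bf A}_0^m \widetilde{\bf A}_{(n)_k} {\bf R}_h {\bf P}{\bf v} : m, n \in \N_0,\ h \in G \bigr\}} \subset \widehat V.$$
By construction this is a closed convex cone in $\widehat V$, and since its generators span $\widehat V$ by \eqref{def:vhat}, we have $K - K = \widehat V$; a standard convex-analytic observation then gives that $K$ has nonempty interior (write a basis $\{{\bf v}_i\}$ of $\widehat V$ as ${\bf v}_i = {\bf a}_i - {\bf b}_i$ with ${\bf a}_i, {\bf b}_i \in K$; the point $\sum_i ({\bf a}_i + {\bf b}_i)$ then has a $K$-neighbourhood). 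For $\widehat{\bf A}_i$-invariance I would check the action on generators, using that $\widehat{\bf A}_i$ acts as $\widetilde{\bf A}_i$ on $\widehat V$ by \eqref{EQ:hat-replacement}: the case $i = 0$ simply increases $m$ by one, and for $i \neq 0$ a $k$-ary digit concatenation identifies $\widetilde{\bf A}_i \widetilde{\bf A}_0^m \widetilde{\bf A}_{(n)_k}$ with $\widetilde{\bf A}_{(n')_k}$ for a suitable $n' \geqslant 1$. In either case the image is again a generator, so linearity and continuity give $\widehat{\bf A}_i K \subset K$.

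The hard part will be showing $K \cap (-K) = \{0\}$. My strategy is to separate using the functionals built from the canonical representation: I will prove that ${\bf u}^T {\bf A}_{(m)_k} {\bf w} \geqslant 0$ for every ${\bf w} \in K$ and every $m \geqslant 1$. For a generator ${\bf w} = \widetilde{\bf A}_0^r \widetilde{\bf A}_{(s)_k} {\bf R}_h {\bf P}{\bf v}$, Lemma~\ref{LEM:A-limit-cycle} lets me write ${\bf R}_h {\bf P}{\bf v} = \lim_j c_{n_j} \widetilde{\bf A}^{n_j} {\bf v}$ along a subsequence with $g^{n_j} \to h$. The identity ${\bf u}^T {\bf A}_{(\ell)_k} {\bf A}_i = {\bf u}^T {\bf A}_{(k\ell + i)_k}$, together with the fact that $k\ell + i \geqslant 1$ whenever $\ell \geqslant 1$ so the right-hand side lies in $V^T$, can be iterated to drop every tilde in ${\bf u}^T {\bf A}_{(m)_k} \widetilde{\bf A}_0^r \widetilde{\bf A}_{(s)_k} \widetilde{\bf A}^{n_j}$. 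Expanding ${\bf A}^{n_j}$ as a sum over $n_j$-letter words in $\mathcal A$ then realises
$$c_{n_j} \, {\bf u}^T {\bf A}_{(m)_k} {\bf A}_0^r {\bf A}_{(s)_k} {\bf A}^{n_j} {\bf v} = c_{n_j} \sum_{q} f(N_{m,r,s,q,n_j})$$
as a nonnegative multiple of a finite sum of values of the nonnegative sequence $f$ at nonnegative integers $N_{m,r,s,q,n_j}$ built via digit concatenation. This is $\geqslant 0$, the inequality persists in the limit $j \to \infty$, and then extends by linearity and continuity to all of $K$.

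Once this is in hand, the conclusion follows quickly: if ${\bf w} \in K \cap (-K)$, then ${\bf u}^T {\bf A}_{(m)_k} {\bf w} = 0$ for every $m \geqslant 1$. Since $\{{\bf u}^T {\bf A}_{(m)_k} : m \geqslant 1\}$ spans $V^T$ by definition, this means ${\bf y}^T {\bf w} = 0$ for every ${\bf y} \in V$; as ${\bf w} \in \widehat V \subset V$, taking ${\bf y} = {\bf w}$ yields $\|{\bf w}\|^2 = 0$, so ${\bf w} = 0$, completing the proof.
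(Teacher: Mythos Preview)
Your proof is correct and follows essentially the same line as the paper's: the same candidate $K = \overline{\Span_+(U)}$, invariance checked on generators, nonempty interior from the fact that $U$ spans $\widehat V$, and the pointedness $K\cap(-K)=\{0\}$ via the separating family $\{{\bf u}^T{\bf A}_{(m)_k}\}_{m\geqslant 1}$. The only noteworthy difference is how you justify ${\bf u}^T{\bf A}_{(m)_k}{\bf w}\geqslant 0$ for generators ${\bf w}$: the paper appeals to $C_m(h)\geqslant 0$ (a consequence of the ghost-measure approximants being probability measures, using nondegeneracy for the denominator), whereas you bypass this by invoking Lemma~\ref{LEM:A-limit-cycle} to write ${\bf R}_h{\bf P}{\bf v}$ as a limit of $c_{n_j}\widetilde{\bf A}^{n_j}{\bf v}$ and then reading off nonnegativity directly from $f\geqslant 0$ after expanding ${\bf A}^{n_j}$ --- a slightly more self-contained route that does not lean on the nondegeneracy hypothesis at that particular step.
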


\begin{proof}
Set $U=\bigl\{ \widetilde{\bf A}_0^{m} \widetilde{\bf A}_{(n)_k} {\bf R}_h {\bf P} {\bf v} \bigr\}_{m,n \in \N_0,h \in G}$ and $K = \overline{\Span_+ (U)}.$ Note that $U$ is invariant under the application of $\widehat{\bf A}_i$ by construction and hence the same holds for its positive span (by linearity) and the corresponding closure (by continuity). That is, $K$ is invariant under $\widehat{\bf A}_i$ for all $i$ and it remains to show that $K$ is indeed a cone.
By definition, $K$ is topologically closed and as the closure of a positive span, it is invariant under linear combinations with positive coefficients. 
Since the underlying sequence is nondegenerate, $C_m(h)\geqslant 0$ for all $h\in G$, and it follows that every ${\bf y} \in U$ has a nonnegative scalar product with all elements of $W = \{{\bf u}^T {\bf A}_{(n)_k} \}_{n \in \N}$. This property extends to all ${\bf y} \in K$. Since $W$ spans $V \supset \widehat{V}$, it follows that one of these scalar products is in fact strictly positive unless ${\bf y} = 0$. From this observation we conclude that $K \cap (-K) = \{0\}$.
By construction, the set $U$ contains a basis of of $\widehat{V}$. These basis vectors span a cone that lies in $K$. Hence, $K$ has nonempty interior within $\widehat{V}$ and it follows that $K$ is indeed a cone.
\end{proof}

In what follows, we let $\rho^{\ast} = \rho^{\ast}(\widehat{\mathcal{A}})$ be the joint spectral radius of the set $\widehat{\mathcal{A}}$ of matrices $\widehat{\bf{A}}_i$ with $0\leqslant i \leqslant k-1$.
Note that the matrices in $\widehat{\mc A}$ sum up to
$\widehat{\bf A} = {\bf P}_{\widehat{V}} \widetilde{\bf A} {\bf P}_{\widehat{V}}$. By construction of ${\bf Pv}$, the restriction of $\widetilde{\bf A}$ to $\widehat{V}$ is easily seen to inherit the full spectral radius. That is, we have $\rho(\widetilde{\bf A}) = \rho(\widehat{\bf A})$ and keep writing $\rho$ for their common value.
Recall that $\rho^{\ast} \leqslant \rho$ by the fundamental inequality. 
The condition that this inequality is strict gives a spectral characterisation for the ghost measures.

\begin{proof}[Proof of Theorem \ref{PROP:continuity-condition}]
First, let $\rho^* < \rho$, $y \in \TT$, $h \in G$ and recall the expression for $\mu_h(\{y\}) = \lim_{n \to \infty} c_n(h,y)$ in Corollary~\ref{COR:point-mass-sequence}. We get from \eqref{EQ:tilde-replacement} and \eqref{EQ:hat-replacement} that
\[
{\bf u}^T {\bf A}_{x_1} \cdots {\bf A}_{x_{n}} {\bf R}^{n} {\bf R}_h {\bf P} {\bf v}
= {\bf u}^T {\bf A}_{x_1} \widehat{\bf A}_{x_2} \cdots \widehat{\bf A}_{x_n} {\bf R}^n {\bf R}_h {\bf Pv}. 
\]
As the vectors in $V_{\lim}$ are uniformly bounded, one has $$\mu_h(\{y\}) = \lim_{n \to \infty} c_n(h,y) = 0.$$ Since both $y$ and $h$ were chosen arbitrarily, all ghost measures are continuous.

Now, assume $\rho^*=\rho$. We show that there exists a ghost measure $\mu$ and $y\in\mathbb{T}$ for which $\mu(\left\{y\right\})>0$. 
Note that \cite[Prop.~1.6]{J2009} implies that for some $d_1>0$ $$\max_{0\leqslant i_1,i_2,\ldots,i_{N}\leqslant k-1}\big\| \widehat{{\bf A}}_{i_1}\widehat{{\bf A}}_{i_2}\cdots\widehat{{\bf A}}_{i_{N}}\big\|\geqslant d_1(\rho^*)^N,$$ for each positive integer $N$. That is, there is an infinite sequence of words $w_N$ (each $w_N$ having length $N$) such that $\big\| \widehat{{\bf A}}_{w_N}\big\|\geqslant d_1(\rho^*)^N$. This implies that some entry of $\widehat{{\bf A}}_{w_N}$ (with respect to an arbitrary basis) grows in the same manner on some subsequence. We use an appropriate choice for such basis vectors below.
Since $\widehat{V}$ is finite-dimensional, it admits a finite set of basis vectors of the form $\widetilde{\bf A}_w {\bf R}_h {\bf P} {\bf v}$. Similarly, by Lemma~\ref{LEM:u-cyclic-span}, we also get a finite set of basis vectors of the form ${\bf u}^T {\bf A}_{w'}$ and since we work on $\widehat{V} \subset V$, we can assume that $w'$ does not start with $0$.
By the pigeonhole principle, there are fixed choices of the words $w,w'$ and of $h \in G$ such that
\begin{equation}
\label{EQ:product-lower-bound}
{\bf u}^T {\bf A}_{w'} \widehat{\bf A}_{w_N} \widetilde{\bf A}_w {\bf R}_h {\bf P} {\bf v} \geqslant d_2 (\rho^{\ast})^N,
\end{equation}
for some fixed $d_2 > 0$ and infinitely many $N$. Up to restricting to a subsequence, we may in fact assume that this holds for all $N$.  
For each $N$, let $m_N$ be such that $(m_N)_k = w' w_N w$ and $\ell_N +1 = |w' w_N w|$ the length of the corresponding string. 
Using \eqref{EQ:tilde-replacement} and \eqref{EQ:hat-replacement}, we see that we can drop all decorations on the left hand side of \eqref{EQ:product-lower-bound}, yielding
\begin{equation}
\label{EQ:product-lower-bound-II}
{\bf u}^T {\bf A}_{(m^{}_N)^{}_k} {\bf R}_h {\bf Pv} \geqslant d_2 (\rho^*)^N.
\end{equation}
Since $m_N$ grows with $N$, the diameter of the intervals $I_N = I_{\ell_N,m_N}$ shrinks to $0$ as $N \to \infty$. 
Recall from the proof of Theorem~\ref{THM:ghost-measure-group} that
\[
\mu_n(I_{\ell,m}) 
\sim \rho^{-\ell} \frac{{\bf u}^T {\bf A}_{(m)_k} {\bf R}^{\ell-n} {\bf Pv}}{{\bf u}^T ({\bf A} - {\bf A}_0) {\bf R}^{-n} {\bf Pv}}
\]
as $n \to \infty$.
Since $\|{\bf R}^{-n}\|=1$ for every $n$, the denominator is uniformly bounded above. 
Hence, there is a constant $c>0$ such that for large enough $n$,
\[
\mu_n(I_{\ell,m}) \geqslant c \rho^{-\ell} {\bf u}^T {\bf A}_{(m)_k} {\bf R}^{\ell-n} {\bf Pv}.
\] 
 Given $N\in\N$, we may therefore choose a large enough $n_N$ with the property that ${\bf R}^{\ell_N - n_N}$ is sufficiently close to ${\bf R}_h$ and such that
\[
\mu_{n_N}(I_N) 
\geqslant \frac{c}{2}  \rho^{-\ell_N} {\bf u}^T {\bf A}_{(m_N)_k} {\bf R}_h {\bf P} {\bf v}
\geqslant \frac{c}{2} d_2 (\rho^*)^{N-\ell_N},
\]
where the last step follows by \eqref{EQ:product-lower-bound-II}. Since $N - \ell_N = |w'| + |w| +1$ is a constant, this yields that there is some constant $d>0$ with $\mu_{n_N} (I_N) > d$ for all $N \in \N$. 
Again restricting to a subsequence if necessary, we assume that $\mu_{n_N}$ converges to some ghost measure $\mu$ and that the (middle points of) $I_N$ converge to some point $y \in \TT$ as $N \to \infty$. Then, for every compact neighbourhood $U$ of $y$, the Portemanteau theorem yields that
\[
\mu(U) \geqslant \limsup_{N \to \infty} \mu_{n_N} (U)
\geqslant \limsup_{N \to \infty} \mu_{n_N}(I_N) > d,
\]
so $\mu(\{y\}) > d$, implying that there is a ghost measure that is not continuous.
\end{proof}

\begin{remark}
Theorem~\ref{PROP:continuity-condition} may be rephrased by saying that $\rho = \rho^*$ whenever there is at least one ghost measure that admits a Bragg peak.
It is natural to ask whether this also implies that there is a ghost measure without a continuous component. However, this is not always the case; see Example~\ref{EX:pp+ac} for a counterexample.  \exend
\end{remark}

In what follows, we assume that $\mu_h$ is continuous and provide criteria for absolute continuity and singularity. 
To do this, we look at the almost everywhere behaviour of regular sequences via the related matrix norms, and so we turn our attention to the Lyapunov exponent, $\log_k\bar{\rho}$.

In our next result, we use the fact that if for $\lambda$-almost all $x$, $\{E_i(x)\}_{i\geqslant 1}$ is a sequence of sets that \emph{shrinks nicely} to $x$ and $\mu(E_i(x))/\lambda(E_i(x))$ converges to zero as $i\to\infty$, then the measure $\mu$ is singular; see Rudin \cite[Theorem 7.14]{Rudin}. Here, the sequence $\left\{E_i(x)\right\}$ shrinks nicely to $x$ if there is an $\alpha>0$ for which the following conditions hold: 
(i) $E_i\subset B(x,r_i)$ where $B(x,r_i)$ the open ball centred at $x$ with radius $r_i$ and (ii) $\lambda(E_i)\geqslant \alpha\cdot \lambda( B(x,r_i))$. 

In what follows, we let $\bar{\rho}=\bar{\rho}(\widehat{\mathcal{A}})$, $\rho^*=\rho^*(\widehat{\mathcal{A}})$ and $\rho=\rho(\widehat{\bf A})$.  We have the following generalisation of Theorem 2.2 in \cite{CEGM}.

\begin{proposition}\label{prop:sc} If   $\bar{\rho}<\rho/k$, then all continuous ghost measures $\mu_h$ are singular.
\end{proposition}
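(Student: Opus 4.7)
The approach is to apply Rudin's criterion \cite[Thm.~7.14]{Rudin}: it suffices to exhibit, for $\lambda$-a.e.\ $x \in \TT$, a sequence $(E_\ell(x))_{\ell \in \N}$ of sets shrinking nicely to $x$ such that $\mu_h(E_\ell(x))/\lambda(E_\ell(x)) \to 0$. The natural candidate is $E_\ell(x) = I_{\ell,m_\ell(x)}$, the unique element of $\mc I_k$ of level $\ell$ containing $x$; these shrink nicely to $x$ (with $\alpha = 1/2$) for every $x \notin \tfrac{1}{k-1}\Z[k^{-1}]$, a set of full Lebesgue measure.

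Since $\mu_h$ is assumed continuous, Corollary~\ref{COR:mu-continuous-on-I} combined with $\lambda(I_{\ell,m}) = 1/(k^\ell (k-1))$ gives
\[
\frac{\mu_h(I_{\ell,m_\ell(x)})}{\lambda(I_{\ell,m_\ell(x)})}
= \frac{(k-1) k^\ell}{\rho^\ell}\cdot
\frac{{\bf u}^T {\bf A}_{(m_\ell(x))_k} {\bf R}^\ell {\bf R}_h {\bf Pv}}{{\bf u}^T ({\bf A} - {\bf A}_0) {\bf R}_h {\bf Pv}}.
\]
The denominator is uniformly bounded away from zero by nondegeneracy. Writing $x = \pi(w)$ with $w \in \mathbb{X}_k$, we have $(m_\ell(x))_k = w_1 w_2 \cdots w_{\ell+1}$, and invoking \eqref{EQ:tilde-replacement-several} together with \eqref{EQ:hat-replacement} (note ${\bf R}_h {\bf Pv} \in V$ and ${\bf R}^\ell {\bf R}_h{\bf Pv}$ projects into $\widehat V$ after one application of $\widetilde{\bf A}_{w_2}$) yields
\[
\bigl| {\bf u}^T {\bf A}_{(m_\ell(x))_k} {\bf R}^\ell {\bf R}_h {\bf Pv}\bigr|
= \bigl| {\bf u}^T {\bf A}_{w_1}\, \widehat{\bf A}_{w_2}\cdots \widehat{\bf A}_{w_{\ell+1}} {\bf R}^\ell {\bf R}_h {\bf Pv} \bigr|
\leqslant C \, \bigl\| \widehat{\bf A}_{w_2}\cdots \widehat{\bf A}_{w_{\ell+1}} \bigr\|,
\]
where $C$ is a constant absorbing $\|{\bf u}\|$, $\max_j \|{\bf A}_j\|$, and $\sup_{h \in G}\|{\bf R}^\ell {\bf R}_h {\bf Pv}\|$, the last being finite because ${\bf R}$ and ${\bf R}_h$ are unitary.

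The key input is that under Lebesgue measure on $\TT$, the digits $w_2, w_3, \ldots$ are i.i.d.\ uniform on $\{0,\ldots,k-1\}$. The Lyapunov exponent characterisation recalled at the start of Section~\ref{SEC:contin} therefore gives, for $\lambda$-a.e.\ $x$,
\[
\lim_{\ell \to \infty} \bigl\| \widehat{\bf A}_{w_2} \cdots \widehat{\bf A}_{w_{\ell+1}} \bigr\|^{1/\ell} = \bar\rho.
\]
Combining this with the expression for the ratio, for $\lambda$-a.e.\ $x$,
\[
\limsup_{\ell \to \infty} \left( \frac{\mu_h(I_{\ell,m_\ell(x)})}{\lambda(I_{\ell,m_\ell(x)})} \right)^{1/\ell} \leqslant \frac{k \bar\rho}{\rho} < 1
\]
by the hypothesis $\bar\rho < \rho/k$, so the ratio decays geometrically and in particular tends to $0$. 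Rudin's criterion then yields that $\mu_h \perp \lambda$, and since $\mu_h$ is continuous, it is singular continuous. As $h \in G$ was arbitrary, the conclusion follows.

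The main technical care is needed in (a) ensuring the replacement of $\mathbf{A}_{w_j}$ by $\widehat{\mathbf{A}}_{w_j}$ is valid, which is handled by the invariance of $\widehat V$ together with \eqref{EQ:tilde-replacement-several}--\eqref{EQ:hat-replacement}, and (b) transporting the Lyapunov exponent statement to almost-every-$x$ with respect to Lebesgue measure, which holds because Lebesgue measure pushes forward to the Bernoulli measure on the digits under the coding $\pi$.
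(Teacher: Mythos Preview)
Your proposal is correct and follows essentially the same approach as the paper's proof: both apply Rudin's differentiation criterion with the intervals $E_\ell(x)=I_{\ell,m_\ell(x)}$, invoke Corollary~\ref{COR:mu-continuous-on-I} for the explicit formula, bound the numerator via the norm of the $\widehat{\bf A}$-product, and use the almost-everywhere characterisation of the Lyapunov exponent together with $\bar\rho<\rho/k$ to force geometric decay of the ratio. One small imprecision in your justification of the $\widehat{\bf A}$-replacement: you do not need to wait for ``one application of $\widetilde{\bf A}_{w_2}$''---since ${\bf R}^\ell{\bf R}_h={\bf R}_{g^{-\ell}h}$ with $g^{-\ell}h\in G$, the vector ${\bf R}^\ell{\bf R}_h{\bf Pv}$ already lies in $\widehat V$ by its very definition, so \eqref{EQ:hat-replacement} applies directly.
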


\begin{proof} Let $X\subset[0,1)\setminus \frac{1}{(k-1)}\mathbb{Z}[k^{-1}]$ be a set with $\lambda(X)=1$ for which 

$$\bar{\rho}=\lim_{n\to\infty}\big\|\widehat{{\bf A}}_{w_2}\widehat{{\bf A}}_{w_2}\cdots\widehat{{\bf A}}_{w_{n+1}}\big\|^{1/n},$$
for every $x=\pi(w)\in X$ with $w=w_1w_2w_3\cdots$. Also, let $\varepsilon>0$ be such that $\bar{\rho}+\varepsilon<\rho/k$. Then $x\in I_{\ell,w_1w_2\cdots w_{\ell+1}}$ for each $\ell$, and taking the limit, we have $\{x\}= \bigcap_{\ell}I_{\ell,w_1w_2\cdots  w_{\ell+1}}$. Note that since $x\not\in \frac{1}{(k-1)}\mathbb{Z}[k^{-1}]$, it is never at the border of any $I_{\ell,m_{\ell}}$ for any $\ell$. Taking $r_\ell=k^{-\ell}(k-1)^{-1}$ and $\alpha=\frac{1}{2}$ in the definition above,  one confirms that the sequence $\left\{E_\ell\right\}$ with $E_{\ell}:=I_{\ell,w_1\cdots w_{\ell+1}}$ shrinks nicely to $x$.

 From Corollary~\ref{COR:mu-continuous-on-I}, we have
\[
\mu_h(I_{\ell,w_1w_2\cdots w_{\ell+1}})
=\rho^{-\ell} \frac{{\bf u}^T {\bf A}_{w_1}\widehat{{\bf A}}_{w_2}\cdots\widehat{{\bf A}}_{w_{\ell+1}} {\bf R}^{\ell} {\bf R}_h {\bf Pv}}{{\bf u}^T ({{\bf A}} - {{\bf A}}_0) {\bf R}_h {\bf Pv}}, 
\]
where ${\bf u}^T ({{\bf A}} - {{\bf A}}_0) {\bf R}_h {\bf Pv}\geqslant C>0$, for some $C$ because $f$ is nondegenerate. 
Note that we have $\lambda(I_{\ell,w_1w_2\cdots w_{\ell+1}})=k^{-\ell}(k-1)^{-1}$. Let $N=\max_{1\leqslant i\leqslant k} \|{\bf A}_i\|$  and set 
$d=C^{-1}(k-1)N\|{\bf u}\| \|{\bf Pv}\|$. One obtains 
\begin{align*}
\lim_{\ell\to\infty} \frac{\mu_h(I_{\ell,w_1w_2\cdots w_{\ell+1}})}{\lambda(I_{\ell,w_1w_2\cdots w_{\ell+1}})}&\leqslant d\lim_{\ell\to\infty}\left(\frac{k}{\rho}\right)^{\ell}  \big\|\widehat{{\bf A}}_{w_2\cdots w_{\ell+1}}\big\|\\
& \leqslant d\lim_{\ell\to\infty}\left(\frac{k}{\rho}\cdot (\bar{\rho}+\varepsilon)\right)^{\ell}=0,
\end{align*}
where we have chosen $\|\cdot\|$ to be the $2$-norm so that $\|{\bf R}^{\ell}{\bf R}_h\|=1$, for all $\ell\geqslant 1$ and $h\in G$. This means that $\mu_h$ is singular, from which the claim follows. 
\end{proof}

Each of the values in the fundamental inequality \eqref{eq:fund} reflect a certain type of behaviour of the underlying regular sequence. The Lyapunov exponent $\bar{\rho}$ describes the almost-everywhere behaviour, $\rho/k$ describes the average behaviour, the joint spectral radius $\rho^*$ describes the maximal behaviour, and $\rho$ describes the behaviour of the partial sums. The previous two results considered what happens when either of the two outside inequalities in \eqref{eq:fund} is strict. In the rest of this section, we focus on the middle inequality of \eqref{eq:fund} connecting $\rho/k$ and $\rho^*$. Here, we note that if $\rho/k<\rho^*$, then the maximal values of the associated regular sequence occur on a thin set. This is obtained via the following reformulation of Proposition~6.1 in \cite{CEGM}.

\begin{proposition}\label{prop:af}
Let $f$ be a nonnegative, minimal and nondegenerate $k$-regular sequence. Suppose $\rho/k<\rho^{\ast}$. For $m\geqslant 0$, define $a_m$ to be 
\[
a_m(x)=\frac{1}{\big(\rho^{\ast}\big)^m} f(k^{m}+\lfloor k^{m}(k-1)x\rfloor)
\]
Then, the sequence
$\left\{a_{m}(x)\right\}$ converges to zero as $m\to \infty$ for $\lambda$-a.e.~$x\in[0,1]$.
\end{proposition}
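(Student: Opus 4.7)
My strategy is to reduce the statement for $a_m(x)$ to the almost-sure growth rate of random products of matrices from $\mathcal A$, for which the Lyapunov exponent $\bar\rho$ is the governing quantity, and then exploit the gap between $\bar\rho$ and $\rho^*$ provided by the hypothesis and the fundamental inequality \eqref{eq:fund}.

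First, for $\lambda$-a.e.\ $x \in [0,1)$---specifically those not in the countable set $\frac{1}{k-1}\Z[k^{-1}]$---we have a unique coding $x = \pi(w)$ with $w = w_1 w_2 \cdots$, $w_1 \in \{1,\ldots,k-1\}$. Just as in the proof of Corollary~\ref{COR:point-mass-sequence}, the integer $n_m := k^m + \lfloor k^m(k-1) x\rfloor$ has base-$k$ expansion $w_1 w_2 \cdots w_{m+1}$, so that
\[
f(n_m) \,=\, {\bf u}^T {\bf A}_{w_1} {\bf A}_{w_2}\cdots {\bf A}_{w_{m+1}} {\bf v}.
\]
Submultiplicativity of any operator norm, together with $w_1 \in \{1,\ldots,k-1\}$, yields
\[
|f(n_m)| \,\leqslant\, C \cdot \bigl\|{\bf A}_{w_2}\cdots {\bf A}_{w_{m+1}}\bigr\|,
\qquad C \,:=\, \|{\bf u}\|\,\max_{0\leqslant i\leqslant k-1}\|{\bf A}_i\|\,\|{\bf v}\|.
\]

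Next, I would invoke the a.e.\ description of the Lyapunov exponent recalled immediately before the proposition: under the coding $\pi$, Lebesgue measure corresponds to the uniform Bernoulli measure on the symbolic space, and hence for $\lambda$-a.e.\ $x$ we have $\lim_{m\to\infty} \|{\bf A}_{w_2}\cdots {\bf A}_{w_{m+1}}\|^{1/m} = \bar\rho$. By the fundamental inequality \eqref{eq:fund} we have $\bar\rho \leqslant \rho/k$, and combining this with the hypothesis $\rho/k < \rho^*$ gives $\bar\rho < \rho^*$. Choose $\varepsilon > 0$ with $\bar\rho + \varepsilon < \rho^*$. Then, for $\lambda$-a.e.\ $x$, there exists $m_0(x)$ such that $\|{\bf A}_{w_2}\cdots {\bf A}_{w_{m+1}}\| < (\bar\rho + \varepsilon)^m$ for all $m \geqslant m_0(x)$, whence
\[
|a_m(x)| \,\leqslant\, C \cdot \Bigl(\tfrac{\bar\rho + \varepsilon}{\rho^*}\Bigr)^m \,\xrightarrow{\;m\to\infty\;}\, 0.
\]

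The main obstacle I expect is reconciling the quantities for the ambient family $\mathcal A$ (which appear in the norm estimate above) with those for the restricted family $\widehat{\mathcal A}$ to which the hypothesis refers. The cleanest way around this is to pass to $\widetilde{\bf A}_i$ via \eqref{EQ:tilde-replacement-several} and then use the invariance of $\widehat V$ together with \eqref{EQ:hat-replacement}: once ${\bf v}$ is projected onto $\widehat V$ via ${\bf P}_{\widehat V}$, the product acts by $\widehat{\bf A}$-matrices, while the component of ${\bf v}$ orthogonal to $\widehat V$ is controlled by the cone structure of Lemma~\ref{LEM:cone-condition} and the nonnegativity of $f$. The Lyapunov-exponent estimate and the fundamental inequality then apply directly to $\widehat{\mathcal A}$, matching the hypothesis $\rho/k < \rho^*$ and completing the argument.
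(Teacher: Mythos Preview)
Your route via the Lyapunov exponent is genuinely different from the paper's, but the obstacle you flag in the last paragraph is real and your proposed fix does not close it. The fundamental inequality $\bar\rho\leqslant\rho/k$ in \eqref{eq:fund} is only established for $\widehat{\mathcal A}$, because that is the collection shown to preserve a cone in Lemma~\ref{LEM:cone-condition}. Your norm bound, however, involves products of the ambient matrices ${\bf A}_i$ (or, after \eqref{EQ:tilde-replacement-several}, the $\widetilde{\bf A}_i$), and there is no reason for $\bar\rho(\widetilde{\mathcal A})\leqslant\rho/k$ to hold without a cone. Decomposing ${\bf v}={\bf P}_{\widehat V}{\bf v}+{\bf v}^{\perp}$ helps only with the first component: since $\widehat V$ is $\widetilde{\bf A}_i$-invariant, the matrices act in block upper-triangular form with $\widehat{\bf A}_i$ in the top-left corner, but the action on ${\bf v}^{\perp}$ is governed by the complementary diagonal block, whose products you have no control over. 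The cone of Lemma~\ref{LEM:cone-condition} lives inside $\widehat V$ and says nothing about $\widehat V^{\perp}$, so ``nonnegativity of $f$'' does not rescue this term.

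The paper avoids the whole issue by not invoking $\bar\rho$ at all. It uses a Chebyshev-type bound: since $\Sigma_f(m)\leqslant P(m)\rho^m$ (this needs only $\rho=\rho(\widetilde{\bf A})$), the number of integers $j\in[k^m,k^{m+1})$ with $f(j)\geqslant\varepsilon(\rho^{\ast})^m$ is at most $P(m)(\rho/\rho^{\ast})^m/\varepsilon$. The hypothesis $\rho/k<\rho^{\ast}$ then makes this $o(k^m)$ exponentially fast, so $\lambda\{x:a_m(x)\geqslant\varepsilon\}$ is summable in $m$, and Borel--Cantelli finishes. This argument needs only $\rho$ and $\rho^{\ast}$ as black boxes and never has to pass between $\mathcal A$, $\widetilde{\mathcal A}$ and $\widehat{\mathcal A}$.
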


\begin{proof}
Since $\rho$ is the spectral radius of $\widetilde{\bf A}$, we know that there exists a polynomial $P(m)$ such that $\Sigma_f(m)\leqslant P(m)\rho^{m}$, for some polynomial $P(m)$ of degree at most the degree of $f$. We then show that there exist exponentially few integers $j\in [k^{m},k^{m}+1)$ such that $f(j)\geqslant \varepsilon (\rho^*)^{m}$ for any given $\varepsilon>0$. Note that the number $N$ of such integers satisfies $N\varepsilon (\rho^*)^{m}\leqslant P(m)\rho^{m}$. Pick a $c\in(0,1)$ with $\rho/k<c\rho^*$. Then one has 
\begin{equation}
N\leqslant \frac{P(m)\rho^{m}}{(\rho^{\ast})^{m}\varepsilon}=\frac{P(m)}{\varepsilon}\left(\frac{\rho}{\rho^*}\right)^m< \frac{P(m)(ck)^m}{\varepsilon}.
\end{equation}
Thus, for any $d\in(c,1)$ and for large $m$, we have $\lambda(\left\{ x\in [0,1]\colon a_{m}(x)\geqslant \varepsilon\right\})\leqslant d^m/\varepsilon$. 
Since this expression is summable over $m$, it follows by a standard application of the Borel--Cantelli lemma, that $\lim_{m \to \infty}a_m(x) = 0$ for $\lambda$-almost every $x$.
\end{proof}

The following corollary can be proved along the same lines as Proposition \ref{prop:af}.

\begin{corollary}\label{COR:Lyapunov}
Let $f$ be a nonnegative, minimal and nondegenerate $k$-regular sequence. Suppose $\rho/k<\rho^{\ast}$. Then, for Lebesgue a.e. $x\in [0,1]$ with $x=\pi(w)$,
\[
\lim_{n\to \infty} (\rho^{\ast})^{-n}\big\|\widehat{{\bf A}}_{w_1}\widehat{{\bf A}}_{w_2}\cdots \widehat{{\bf A}}_{w_{n}}\big\|=0. 
\]
\end{corollary}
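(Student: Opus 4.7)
\emph{Proof proposal.}  The plan is to mirror the argument for Proposition~\ref{prop:af}, with the scalar values $f(j)$ replaced by norms of matrix products $\widehat{\bf A}_w$. First, I would equip $\widehat V$ with a vector-induced matrix norm and invoke Lemma~\ref{LEM:cone-condition} to obtain a cone $K \subset \widehat V$ that is preserved by each $\widehat{\bf A}_i$. Since products of cone-preserving matrices remain cone-preserving, the entire collection $\{\widehat{\bf A}_w : w \in \{0,\ldots,k-1\}^n\}$ fixes $K$ for every $n \in \N$, and Corollary~\ref{cor:Bcone} provides a constant $c>0$, independent of $n$, such that
\[
\sum_{|w|=n}\|\widehat{\bf A}_w\| \;\leqslant\; c^{-1}\Bigl\|\sum_{|w|=n}\widehat{\bf A}_w\Bigr\| \;=\; c^{-1}\|\widehat{\bf A}^n\| \;\leqslant\; c^{-1}P(n)\rho^n,
\]
where $P$ is a polynomial whose degree is determined by the largest Jordan block of $\widehat{\bf A}$ at the eigenvalue $\rho$.

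Next, for a fixed $\varepsilon>0$ I would let $N_n(\varepsilon)$ be the number of length-$n$ words $w$ with $\|\widehat{\bf A}_w\|\geqslant \varepsilon(\rho^*)^n$. The bound above yields
\[
N_n(\varepsilon)\,\varepsilon\,(\rho^*)^n \;\leqslant\; c^{-1}P(n)\rho^n,
\]
and since $\rho/k<\rho^*$ I may pick $\tau \in (0,1)$ with $\rho<\tau k\rho^*$, giving $N_n(\varepsilon) \leqslant c^{-1}\varepsilon^{-1}P(n)(\tau k)^n$. Each initial length-$n$ cylinder in the coding $\pi$ has Lebesgue measure of order $k^{-n}$, so the bad set
\[
E_n(\varepsilon) \;=\; \bigl\{x=\pi(w)\in[0,1] : (\rho^*)^{-n}\|\widehat{\bf A}_{w_1}\cdots\widehat{\bf A}_{w_n}\|\geqslant \varepsilon\bigr\}
\]
satisfies $\lambda(E_n(\varepsilon)) \leqslant C\varepsilon^{-1}P(n)\tau^n$ for a constant $C$ independent of $n$, and this bound is summable in $n$.

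An application of the Borel--Cantelli lemma then shows that for $\lambda$-a.e.\ $x$ the event $E_n(\varepsilon)$ occurs only for finitely many $n$, so $\limsup_{n\to\infty}(\rho^*)^{-n}\|\widehat{\bf A}_{w_1}\cdots\widehat{\bf A}_{w_n}\|\leqslant \varepsilon$ almost everywhere. Intersecting countably many such full-measure sets along $\varepsilon=1/j$ with $j \in \N$ upgrades this to the a.e.\ limit $0$ asserted in the statement. The only genuinely new step compared with Proposition~\ref{prop:af} is the reverse triangle inequality, which is why a common invariant cone (Lemma~\ref{LEM:cone-condition}) is essential; without it one could not pass from the polynomial-times-$\rho^n$ bound on $\|\widehat{\bf A}^n\|$ to a useful bound on the \emph{sum} $\sum_{|w|=n}\|\widehat{\bf A}_w\|$ that drives the counting/Borel--Cantelli step. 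Everything else is routine bookkeeping, including the minor point that restricting to words with $w_1\neq 0$ (as required by the coding) only sharpens the estimate on $N_n(\varepsilon)$.
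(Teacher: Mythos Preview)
Your proposal is correct and is precisely the approach the paper has in mind: the paper offers no separate proof but says the result ``can be proved along the same lines as Proposition~\ref{prop:af}'', and you have carried out exactly that adaptation. You have also correctly identified the single genuinely new ingredient, namely the use of the invariant cone from Lemma~\ref{LEM:cone-condition} together with Corollary~\ref{cor:Bcone} to pass from the bound $\|\widehat{\bf A}^n\|\leqslant P(n)\rho^n$ to a bound on the sum $\sum_{|w|=n}\|\widehat{\bf A}_w\|$, which is what makes the counting and Borel--Cantelli step go through.
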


In an ideal situation, one would hope to relate the maximal behaviour of the sequence with the almost-everywhere behaviour. For example, if the maximal behaviour is somehow greater than the average behaviour, as in $\rho/k<\rho^*$, then, as $\rho/k$ is repelling $\rho^*$, we could hope also that $\rho/k$ is repelling $\bar{\rho}$. If this were the case, then we could replace the condition $\bar{\rho}<\rho/k$ in Proposition \ref{prop:sc} with the condition $\rho/k<\rho^*$ and conclude singular continuity. On the other side, if $\rho/k=\rho^*$, then the maximal behaviour is equal to the average behaviour, and we should expect a certain level of smoothness---this is in fact, exactly the case, we get absolutely continuous ghost measures! Of course, this relies on having an ideal situation. It turns out, the situation is ideal enough if the matrices in $\widehat{\mathcal{A}}$ are nonnegative and do not share a nontrivial invariant subspace. 

The following result is a generalisation of Proposition 7.2 of Jungers \cite{J2009}.

\begin{proposition}\label{prop:sc2} Suppose that the matrices in $\widehat{\mathcal{A}}$ are nonnegative and do not share a nontrivial invariant subspace. If $\rho/k<\rho^*$, then all continuous ghost measures $\mu_h$ are singular. 
\end{proposition}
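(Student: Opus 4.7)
The plan is to mirror the Rudin shrinking-nicely strategy of Proposition~\ref{prop:sc} and show that, for Lebesgue-almost every $x = \pi(w) \in [0,1) \setminus \frac{1}{k-1}\Z[k^{-1}]$, the ratio $\mu_h(E_\ell)/\lambda(E_\ell)$ tends to $0$ along the nicely shrinking intervals $E_\ell = I_{\ell, w_1 w_2 \cdots w_{\ell+1}}$. Rudin's derivation theorem will then force $\mu_h \perp \lambda$; coupled with the assumed continuity of $\mu_h$, this yields singular continuity.

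First I would read off the explicit form of $\mu_h(E_\ell)$ from Corollary~\ref{COR:mu-continuous-on-I}, substitute $\lambda(E_\ell) = 1/(k^\ell(k-1))$, and use the unitarity $\|{\bf R}^\ell\| = 1$ in the operator $2$-norm together with the nondegeneracy lower bound on ${\bf u}^T({\bf A} - {\bf A}_0){\bf R}_h {\bf Pv}$ to arrive at an estimate of the form
\[
\frac{\mu_h(E_\ell)}{\lambda(E_\ell)} \leqslant d \left(\frac{k}{\rho}\right)^\ell \bigl\|\widehat{\bf A}_{w_2} \cdots \widehat{\bf A}_{w_{\ell+1}}\bigr\|,
\]
with $d > 0$ independent of $w$ and $h$. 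Since $\bigl\|\widehat{\bf A}_{w_2} \cdots \widehat{\bf A}_{w_{\ell+1}}\bigr\|^{1/\ell} \to \bar\rho$ for $\lambda$-almost every $w$ by the Lyapunov exponent characterisation, it would suffice to establish the strict inequality $\bar\rho(\widehat{\mathcal{A}}) < \rho/k$: picking any $\eta$ with $\bar\rho < \eta < \rho/k$, the bound above decays like $d(k\eta/\rho)^\ell \to 0$ for a.e.~$x$, and the shrinking-nicely criterion finishes the proof.

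The hard part will be to establish $\bar\rho < \rho/k$, strengthening the fundamental inequality $\bar\rho \leqslant \rho/k$ for cone-preserving families. This is the generalisation of \cite[Prop.~7.2]{J2009} that irreducibility buys us. My plan is to combine three ingredients: (i) Protasov's reverse triangle inequality (Corollary~\ref{cor:Bcone}) applied to the invariant cone $K \subset \widehat V$ of Lemma~\ref{LEM:cone-condition}, which forces the arithmetic mean $k^{-n} \sum_w \bigl\|\widehat{\bf A}_w\bigr\|$ to be sharply of order $(\rho/k)^n$; (ii) the hypothesis $\rho/k < \rho^*$, which makes the maximum norm strictly dominant in the exponential scale; and (iii) a Perron--Frobenius type comparison for the irreducible nonnegative family, showing that any cone-interior vector witnesses the norm growth up to a multiplicative constant, so that no subexponentially thin set of words can carry asymptotically all of the mass. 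These ingredients should combine to produce a quantitative AM--GM gap that survives at the exponential scale and yields $\bar\rho < \rho/k$.

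I expect ingredient (iii) to be the main obstacle. Without irreducibility the family can decompose into invariant blocks with differing spectral radii, producing a gap between the maximum and the arithmetic mean while the geometric mean still saturates the arithmetic mean; irreducibility is precisely what precludes this pathology and allows the AM--GM gap to be transferred into the Lyapunov exponent.
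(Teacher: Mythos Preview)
Your overall architecture is the same as the paper's: reduce to the strict inequality $\bar\rho(\widehat{\mathcal{A}}) < \rho/k$ and then invoke Proposition~\ref{prop:sc}. The difference is how that strict inequality is secured. You propose to combine Protasov's reverse triangle inequality (which pins the arithmetic mean of the norms only up to multiplicative constants), the hypothesis $\rho/k < \rho^*$, and an irreducibility-based mass-concentration argument that you rightly flag as the main obstacle---a finite-$n$ AM--GM gap need not survive in the exponential limit when the arithmetic mean is controlled only asymptotically. The paper dissolves this obstacle with a single device: the no-common-invariant-subspace hypothesis forces the nonnegative matrix $\widehat{\bf A}$ to be irreducible, so it has a strictly positive Perron eigenvector ${\bf x}$, and one works with the linear functional $\|\cdot\| = \langle\cdot,{\bf x}\rangle$ on $\R_+^{d}$. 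For any nonnegative unit ${\bf y}$ the arithmetic mean then collapses \emph{exactly},
\[
\frac{1}{k^n}\sum_{|w|=n}\bigl\|\widehat{\bf A}_w{\bf y}\bigr\| \;=\; \frac{1}{k^n}\bigl\langle{\bf y},\widehat{\bf A}^n{\bf x}\bigr\rangle \;=\; (\rho/k)^n,
\]
so AM--GM gives $(\prod_w\|\widehat{\bf A}_w{\bf y}\|)^{1/k^n} \leqslant (\rho/k)^n$ with equality only when all $k^n$ terms agree; the assumption $\rho/k < \rho^*$ rules this out for some $n$. Maximising over ${\bf y}$ and using that this maximum is submultiplicative in $n$ and dominates $\bar\rho^n$, one reads off $\bar\rho < \rho/k$ directly. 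Thus irreducibility is used just once---to produce the positive Perron vector---and your anticipated obstacle (iii) becomes a one-line identity rather than a comparison argument.
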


\begin{proof} Using standard results of Perron--Frobenius theory, we let ${\bf x}$ be a (positive) $\rho$-eigenvector of $\widehat{\bf A}$, and we consider the norm $\|\cdot\|=\langle \cdot, {\bf x}\rangle$ on $\mathbb{R}_+^{d}$. Now, pick some $n\geqslant 1$ and ${\bf y}\in\mathbb{R}_+^{d}$ with $\|{\bf y}\|=1$ and such that $$r_n=\left(\prod_{0\leqslant i_1,\ldots,i_n\leqslant k-1}\big\|\widehat{\bf A}_{i_1}\widehat{\bf A}_{i_2}\cdots\widehat{\bf A}_{i_n}{\bf y}\big\|\right)^{1/k^n}$$ is maximal. Then, using the arithmetic-geometric mean inequality, we have 
\begin{multline*}
(\bar{\rho})^n\leqslant  r_n\leqslant \frac{1}{k^n}\sum_{0\leqslant i_1,\ldots,i_n\leqslant k-1}\big\|\widehat{\bf A}_{i_1}\widehat{\bf A}_{i_2}\cdots\widehat{\bf A}_{i_n}{\bf y}\big\|\\
=\frac{1}{k^n}\sum_{0\leqslant i_1,\ldots,i_n\leqslant k-1}\big\langle\widehat{\bf A}_{i_1}\widehat{\bf A}_{i_2}\cdots\widehat{\bf A}_{i_n}{\bf y},{\bf x}\big\rangle=\big\langle{\bf y},k^{-n}\widehat{\bf A}^n{\bf x}\big\rangle\\
=(\rho/k)^n\langle{\bf y},{\bf x}\rangle=(\rho/k)^n.
\end{multline*} But the geometric mean is equal to the arithmetic mean only when all of the $k^n$ terms $\big\|\widehat{\bf A}_{i_1}\widehat{\bf A}_{i_2}\cdots\widehat{\bf A}_{i_n}{\bf y}\big\|$ are equal. Since $\rho/k<\rho^*$, this is not the case for all $n$, and so the second inequality above is strict, and we have that $\bar{\rho}<\rho/k$. An application of Proposition \ref{prop:sc} provides the desired result.
\end{proof}

Recall that a positive measure $\mu$ on a $\sigma$-algebra $\mathfrak{M}$ is {\em absolutely continuous} with respect to $\lambda$ provided $\mu(E)=0$ for every $E\in\mathfrak{M}$ for which $\lambda(E)=0$.  

\begin{proposition}\label{PROP:ac-condition} Suppose that the matrices in $\widehat{\mathcal{A}}$ do not share a nontrivial invariant subspace. If $\rho/k=\rho^*$, then all ghost measures $\mu_h$ are absolutely continuous.  
\end{proposition}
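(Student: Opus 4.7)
I would establish the stronger statement that each $\mu_h$ has a uniformly bounded Radon--Nikodym density with respect to Lebesgue measure by controlling the ratio $\mu_h(I_{\ell,m})/\lambda(I_{\ell,m})$ uniformly over all $k$-adic intervals $I_{\ell,m}\in \mc I_k$ and all $h\in G$. Since $k\geqslant 2$, the hypothesis $\rho/k=\rho^*$ automatically yields $\rho^*<\rho$, so Theorem~\ref{PROP:continuity-condition} gives continuity of every $\mu_h$ and Corollary~\ref{COR:mu-continuous-on-I} supplies the closed form
\[
\mu_h(I_{\ell,m}) = \rho^{-\ell}\,\frac{{\bf u}^T {\bf A}_{(m)_k}{\bf R}^{\ell}{\bf R}_h {\bf Pv}}{{\bf u}^T({\bf A}-{\bf A}_0){\bf R}_h {\bf Pv}}
\]
as the starting point.

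I would bound numerator and denominator separately. Nondegeneracy, together with continuity of $h\mapsto {\bf u}^T({\bf A}-{\bf A}_0){\bf R}_h {\bf Pv}$ and compactness of $G$, gives a uniform positive lower bound on the denominator. For the numerator, using ${\bf Pv}\in \widehat V$, the invariance of $\widehat V$ under the $\widetilde{\bf A}_i$, and the replacement rules \eqref{EQ:tilde-replacement-several} and \eqref{EQ:hat-replacement}, I can replace the action of ${\bf A}_{(m)_k}$ on ${\bf Pv}$ by that of the corresponding product $\widehat{\bf A}_{i_1}\cdots\widehat{\bf A}_{i_{\ell+1}}$ with $i_1\cdots i_{\ell+1}=(m)_k$. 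Combined with the unitarity of ${\bf R}$ and ${\bf R}_h$, this produces
\[
|{\bf u}^T {\bf A}_{(m)_k}{\bf R}^{\ell}{\bf R}_h {\bf Pv}|\leqslant \|{\bf u}\|\cdot \|\widehat{\bf A}_{i_1}\cdots\widehat{\bf A}_{i_{\ell+1}}\|\cdot \|{\bf Pv}\|.
\]

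The crux of the argument is then a uniform estimate $\|\widehat{\bf A}_{i_1}\cdots\widehat{\bf A}_{i_n}\|\leqslant C(\rho^*)^n$ independent of the word $i_1\cdots i_n$. This is exactly where the irreducibility hypothesis becomes essential: Barabanov's theorem provides an extremal norm on $\widehat V$ under which every $\widehat{\bf A}_i$ has operator norm at most $\rho^*$, and equivalence of norms on the finite-dimensional space $\widehat V$ supplies the constant $C$. Combining the three estimates and invoking the identity $\rho^*=\rho/k$ yields
\[
\mu_h(I_{\ell,m}) \leqslant C'\rho^{-\ell}(\rho^*)^{\ell+1} = C'\rho^*\Bigl(\frac{\rho^*}{\rho}\Bigr)^{\ell} = C''k^{-\ell} = C'''\,\lambda(I_{\ell,m}),
\]
uniformly in $\ell$, $m$, and $h$. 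To finish, every open subset of $\TT$ decomposes as a countable disjoint union of intervals in $\mc I_k$, so the estimate extends to $\mu_h(U)\leqslant C'''\lambda(U)$ on open sets and then to all Borel sets by outer regularity, giving $\mu_h\ll \lambda$ with density in $L^\infty(\TT)$.

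The main obstacle I anticipate is the extremal-norm step. Without irreducibility one only has $\|\widehat{\bf A}_{i_1}\cdots \widehat{\bf A}_{i_n}\|\leqslant Cn^{d-1}(\rho^*)^n$, and the polynomial prefactor would destroy the uniform bound on $\mu_h(I_{\ell,m})/\lambda(I_{\ell,m})$ and hence the whole argument. Making the extremal-norm construction explicit in the cone-preserving setting, perhaps by adapting the Perron eigenvector arithmetic-geometric comparison used in the proof of Proposition~\ref{prop:sc2}, is the cleanest way to make this key step self-contained.
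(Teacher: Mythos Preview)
Your proof is correct and follows essentially the same approach as the paper: both use the irreducibility hypothesis to secure the uniform bound $\|\widehat{\bf A}_{i_1}\cdots\widehat{\bf A}_{i_n}\|\leqslant C(\rho^*)^n$ (the paper cites \cite[p.~24]{J2009}, which is the same extremal-norm fact you attribute to Barabanov), insert this into the explicit formula for $\mu_h(I_{\ell,m})$ from Corollary~\ref{COR:mu-continuous-on-I}, and then extend the resulting inequality $\mu_h\leqslant C'''\lambda$ from $\mc I_k$ to the full Borel $\sigma$-algebra. One small slip: since ${\bf u}$ need not lie in $V$, the replacement rules \eqref{EQ:tilde-replacement-several}--\eqref{EQ:hat-replacement} only give ${\bf u}^T{\bf A}_{i_1}\widehat{\bf A}_{i_2}\cdots\widehat{\bf A}_{i_{\ell+1}}$ with the first factor undecorated (exactly as the paper writes it), but this is harmless because $\max_i\|{\bf A}_i\|$ is absorbed into the constant.
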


\begin{proof} 
Since the matrices in $\widehat{\mathcal{A}}$ do not share a nontrivial invariant subspace, there is a $c>0$ such that  $$\max_{1\leqslant i_1,i_2,\ldots,i_{n}\leqslant \ell}\big\| \widehat{{\bf A}}_{i_1}\widehat{{\bf A}}_{i_2}\cdots\widehat{{\bf A}}_{i_{n}}\big\|\leqslant c(\rho^*)^n,$$ for each $n\geqslant 1$; see \cite[p.~24]{J2009}. 
Consider the set $\mathcal{I}_k$ which generates the Borel $\sigma$-algebra on $[0,1)$. For $I_{\ell,m}\in\mathcal{I}_k$, with $(m)_k = w_1 \cdots w_{\ell+1}$
\begin{align*}
\mu_h(I_{\ell,m})&= \rho^{-\ell}\frac{{\bf u}^T {\bf A}_{w_1}\widehat{{\bf A}}_{w_2}\cdots\widehat{{\bf A}}_{w_{\ell+1}} {\bf R}^{\ell} {\bf R}_h {\bf Pv}}{{\bf u}^T ({{\bf A}} - {{\bf A}}_0) {\bf R}_h {\bf Pv}}\\
&\leqslant \frac{cN\|{\bf u}\|\|{\bf Pv}\|}{C}\left(\frac{\rho^*}{\rho}\right)^\ell=\frac{c(k-1)\|{\bf u}\|\|{\bf Pv}\|}{C}\lambda(I_{\ell,m}),
\end{align*}
where $N$ and $C$ are the same positive constants as in the proof of Proposition~\ref{prop:sc}, and $\lambda(I_{\ell,m})=1/k^\ell(k-1)$. Thus $\mu_h(A)\leqslant C^{\prime}\lambda(A)$,
for some positive constant $C^{\prime}$ and for each $A$ in the algebra generated by $\mathcal{I}_k$ via finite unions. By the monotone class theorem, this inequality is valid for the whole $\sigma$-algebra, so that $\mu_h$ is absolutely continuous with respect to $\lambda$.
\end{proof}

Note the absence of the nonnegativity condition in Proposition \ref{PROP:ac-condition}. Propositions \ref{prop:sc2} and \ref{PROP:ac-condition} combine to prove Theorem \ref{thm:specclass}.

%%%%%%%%%%%%%%%%%%%%%%%%%%%%%%%%%%%%%%%%%%%%%%
\section{Structure of ghost measures}\label{sec:structure}
%%%%%%%%%%%%%%%%%%%%%%%%%%%%%%%%%%%%%%%%%%%%%%

As before, let $f$ be a nonnegative and nondegenerate $k$-regular sequence with linear representation $({\bf u}, \mathcal{A}, {\bf v})$.
In this section, we aim to provide an infinite convolution representation for ghost measures $\mu_h$ under the additional assumption that ${\bf A}_i$ commutes with ${\bf R}$ for all $i$ (which holds in particular if $\rho$ is the unique eigenvalue of maximal modulus for $\widetilde{\bf A}$). 
Recall from Corollary~\ref{COR:reduced-form} that in this case there is a reduced representation $({\bf w},\mathcal{B},{\bf x})$ of a sequence $g = g_h$ such that $\mu_h$ is the unique ghost measure of $g$. 
We therefore restrict our attention to reduced representations of a sequence $g$ in this and the following section and denote its unique ghost measure by $\mu_g$.

As in Section~\ref{sec:spectype}, it will prove useful to consider the restrictions $\widehat{\bf B}_i$ of $\widetilde{\bf B}_i$ to the space $\widehat{V}$, spanned by $$U = \{ \widetilde{ \bf B}_0^n \widetilde{ \bf B}_{(m)_k}{\bf x} \}_{m,n \in \N_0} .$$
Recall that these matrices admit some invariant cone $K$. The cone condition acts as a replacement for the condition of nonnegativity of the digit matrices in Coons, Evans and Ma\~nibo \cite{CEM1}. 

\begin{lemma}\label{lem:B} 
Let $({\bf w},\mathcal{B},{\bf x})$ be the reduced representation of a sequence $g$. Then, the sequence $(\rho^{-1}\widehat{\bf B})^N$ converges to a projector onto the $\rho$-eigenspace of $\widehat{\bf B}$. In particular, there is a $B>0$, such that $\big\|(\rho^{-1}\widehat{\bf B})^N\big\|<B,$ for all integers $N\geqslant 0$. 
\end{lemma}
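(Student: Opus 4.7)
My approach is to reduce the claim to a standard Riesz spectral decomposition for an operator with a simple dominant eigenvalue. The key structural remark is that $\widehat{V}$ is invariant under every $\widetilde{\bf B}_i$ by the very definition of the spanning set $U$, hence invariant under $\widetilde{\bf B} = \sum_{i=0}^{k-1} \widetilde{\bf B}_i$. Consequently ${\bf P}_{\widehat{V}} \widetilde{\bf B} {\bf y} = \widetilde{\bf B} {\bf y}$ for every ${\bf y} \in \widehat{V}$, so $\widehat{\bf B}$ acts on $\widehat{V}$ as the restriction $\widetilde{\bf B}|_{\widehat{V}}$. Representing $\widetilde{\bf B}$ in a basis obtained by extending a basis of $\widehat{V}$ yields a block upper-triangular matrix, and factorisation of the characteristic polynomial gives the inclusion $\sigma(\widehat{\bf B}|_{\widehat{V}}) \subseteq \sigma(\widetilde{\bf B})$, with algebraic multiplicities of shared eigenvalues only decreasing upon restriction.

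The steps are then the following. First, I verify that ${\bf x} \in \widehat{V}$ by taking $m=n=0$ in $U$ with the convention $\widetilde{\bf B}_{(0)_k} = I$, so that property (ii) of the reduced representation places $\rho$ in $\sigma(\widehat{\bf B})$. Combined with property (i) and the spectral inclusion, $\rho$ is the unique eigenvalue of $\widehat{\bf B}$ of maximum modulus and, by the multiplicity inheritance, semisimple. Second, I invoke the Riesz decomposition $\widehat{\bf B} = \rho \Pi + N$, where $\Pi$ is the spectral projector onto the $\rho$-eigenspace of $\widehat{\bf B}$, satisfying $\Pi^2 = \Pi$, $\Pi N = N \Pi = 0$, and $\rho(N) < \rho$. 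Expanding $(\rho^{-1}\widehat{\bf B})^n = \Pi + (\rho^{-1}N)^n$ and invoking Gelfand's formula $\|(\rho^{-1}N)^n\|^{1/n} \to \rho(N)/\rho < 1$ shows $(\rho^{-1}N)^n \to 0$ in operator norm, so $(\rho^{-1}\widehat{\bf B})^n \to \Pi$; since a convergent sequence of matrices is uniformly bounded, this also yields the desired constant $B$.

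The main obstacle is the semisimplicity of $\rho$ for $\widehat{\bf B}$. If property (i) of the reduced representation is read as the assertion that $\rho$ is a simple eigenvalue of $\widetilde{\bf B}$, semisimplicity transfers directly to $\widehat{\bf B}$ via the multiplicity inheritance sketched above. Under the weaker reading that $\rho$ is merely the unique eigenvalue of maximum modulus, a separate argument is required; the natural tool is the invariant cone $K \subset \widehat{V}$ from Lemma~\ref{LEM:cone-condition}, which, coupled with a Krein--Rutman analysis on $\widehat{\bf B}^T$ and the dual cone, should yield a uniform bound $\|\widehat{\bf B}^n {\bf y}\| = O(\rho^n)$ for ${\bf y} \in K$ and, since $K$ spans $\widehat{V}$, for every ${\bf y} \in \widehat{V}$. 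Such a bound precludes the $n^{r-1}\rho^n$ growth forced by a Jordan chain of length $r \geqslant 2$ at $\rho$, establishing semisimplicity and closing the argument.
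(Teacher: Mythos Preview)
Your reduction to the semisimplicity of $\rho$ for $\widehat{\bf B}$ is the right diagnosis, and once semisimplicity is in hand the Riesz decomposition argument you give is correct and equivalent to what the paper does implicitly via Lemma~\ref{LEM:RA-limit}. The issue is how you obtain semisimplicity.

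Your first reading of property (i) is not the one in force. In the paper, ``unique maximal eigenvalue'' means only that the peripheral spectrum of $\widetilde{\bf B}$ is $\{\rho\}$; Theorem~\ref{thm:specpure} explicitly adds ``simple'' as an additional hypothesis, which would be redundant under your stronger reading. So the transfer-of-simplicity argument is unavailable, and the burden really does fall on your second route.

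That second route, a Krein--Rutman analysis on the cone $K$ from Lemma~\ref{LEM:cone-condition}, does not close the gap as stated. Cone-preservation together with a single peripheral eigenvalue does not force semisimplicity: the Jordan block $\left(\begin{smallmatrix} 1 & 1 \\ 0 & 1 \end{smallmatrix}\right)$ preserves the nonnegative orthant, has $\rho=1$ as its only eigenvalue, and yet $\|J^n{\bf e}_2\|\sim n$, so no uniform bound $\|\widehat{\bf B}^n{\bf y}\| = O(\rho^n)$ on $K$ follows. The dual eigenvector supplied by Krein--Rutman need not be strictly positive on $K$ (in the example it is $(0,1)$, on the boundary of the dual cone), so it cannot be used to dominate the norm.

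What the paper actually uses in place of abstract cone theory are the concrete features (ii)--(iii) of the reduced representation and the nonnegativity of $g$. If $\rho$ had a nontrivial Jordan block, one could pick spanning vectors ${\bf w}^T{\bf B}_{w'}$ of $V$ and $\widetilde{\bf B}_w{\bf x}$ of $\widehat{V}$ with
${\bf w}^T{\bf B}_{w'}\widehat{\bf B}^n\widetilde{\bf B}_w{\bf x}\geqslant c'n^r\rho^n$
along a subsequence. But this expression equals ${\bf w}^T{\bf B}_{w'}{\bf B}^n{\bf B}_w{\bf x}$, which is a single nonnegative summand of ${\bf w}^T{\bf B}^{n+|w'w|}{\bf x}=\rho^{n+|w'w|}{\bf w}^T{\bf x}$ (the last equality from ${\bf x}$ being a $\rho$-eigenvector), giving an $O(\rho^n)$ upper bound and a contradiction. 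You should replace the Krein--Rutman sketch with this explicit bound.
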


\begin{proof}
Recall that $\rho = \rho(\widetilde{\bf B}) = \rho(\widehat{\bf B})$ is the spectral radius of $\widehat{\bf B}$.
In view of Lemma~\ref{LEM:RA-limit}, it suffices to show that if $\lambda$ is an eigenvalue of $\widehat{\bf B}$ with $|\lambda| = \rho$, every corresponding Jordan block must be trivial. 
Otherwise, $\| \widehat{\bf B}^n \| \geqslant c n^r \rho^n$ for some $c>0$ and $r \in \N$. In this case, we can choose basis vectors ${\bf w}^T {\bf B}_{w'}$ of $V \supset \widehat{V}$ and $\widetilde{\bf B}_w {\bf x} \in \widehat{V} $ such that 
\[
|{\bf w}^T {\bf B}_{w'} \widehat{\bf B}^n \widetilde{\bf B}_w {\bf x} | \geqslant c' n^r \rho^n,
\]
for some $c' > 0$ and infinitely many $n$. But the left hand side is equal to
\[
{\bf w}^T {\bf B}_{w'} {\bf B}^n {\bf B}_w {\bf x}
\leqslant {\bf w}^T {\bf B}^{n +|w'w|} {\bf x} = \rho^{n + |w'w|} {\bf w}^T {\bf x},
\]
a contradiction. Hence, $r=0$ and the claim follows.
\end{proof}

For the matrices $\widehat{\bf B}_0,\ldots,\widehat{\bf B}_{k-1}$ coming from the reduced representation of $g$, set 
$$\widehat{\bf B}(z):=\frac{1}{\rho}\sum_{j=0}^{k-1}\widehat{\bf B}_j z^j,$$ 
and note that $\rho \widehat{\bf B}(1)=\sum_{j=0}^{k-1} \widehat{\bf B}_j = \widehat{\bf B}$. 
For a Dirac measure $\delta_x$ on $x \in \TT$ and $j \in \N$ we denote by $\delta_x^j = \delta_x \ast \ldots \ast \delta_x = \delta_{jx}$ the $j$-fold convolution product of $\delta_x$ with itself.
With this notation, we can give an alternative expression for the approximants of the ghost measure.

\begin{lemma}
Let $({\bf w}, \mathcal{B}, {\bf x})$ be a reduced representation of $g$. Then, the $n$-th approximant is given by
\[
\mu_n = {\bf w}^T ({\bf B} - {\bf B}_0)\big(\delta_{1/(k-1)}\big) *\bigg(\bigast_{r=1}^{n} \widehat{\bf B} \big(\delta_{1/k^r(k-1)}\big)\bigg)\, {\bf x},
\]
 where $({\bf B}-{\bf B}_0)(z):=\frac{1}{\rho}\sum_{j=1}^{k-1}{\bf B}_j  z^{j-1}.$
\end{lemma}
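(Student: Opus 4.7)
The plan is to prove the identity by a direct expansion of the right-hand side, identifying it term-by-term with the definition
\[
\mu_n = \frac{1}{\Sigma_g(n)}\sum_{m=0}^{k^{n+1}-k^n-1} g(k^n+m)\,\delta^{}_{m/(k^n(k-1))},
\]
using $\Sigma_g(n)=\rho^n$ from property (iii) of a reduced representation. The two main ingredients are the torus rule $\delta_x^j = \delta_{jx}$ combined with distributivity of convolution over finite linear combinations, and the chain of restrictions between the operators ${\bf B}_i$, $\widetilde{\bf B}_i$ and $\widehat{\bf B}_i$ on the nested subspaces $\widehat{V}\subseteq V\subseteq\R^d$.

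For the expansion, I first substitute $z=\delta_{1/(k-1)}$ and $z=\delta_{1/k^r(k-1)}$ into the matrix polynomials to obtain
\[
({\bf B}-{\bf B}_0)(\delta^{}_{1/(k-1)}) = \tfrac{1}{\rho}\sum_{j_0=1}^{k-1}{\bf B}_{j_0}\,\delta^{}_{(j_0-1)/(k-1)},\quad \widehat{\bf B}(\delta^{}_{1/k^r(k-1)}) = \tfrac{1}{\rho}\sum_{j_r=0}^{k-1}\widehat{\bf B}_{j_r}\,\delta^{}_{j_r/(k^r(k-1))}.
\]
Treating the convolution of matrix-valued measures entry-wise (matrix multiplication combined with $\delta_a\ast\delta_b=\delta_{a+b}$) and distributing all sums, the right-hand side becomes a finite sum indexed by tuples $(j_0,j_1,\ldots,j_n)\in\{1,\ldots,k-1\}\times\{0,\ldots,k-1\}^{n}$. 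Each summand carries a matrix weight ${\bf w}^T{\bf B}_{j_0}\widehat{\bf B}_{j_1}\cdots\widehat{\bf B}_{j_n}{\bf x}$ and a Dirac mass located at $\tfrac{j_0-1}{k-1}+\sum_{r=1}^{n}\tfrac{j_r}{k^r(k-1)}$. A short calculation over a common denominator identifies this location with $m/(k^n(k-1))$, where $m=(j_0-1)k^n+j_1k^{n-1}+\cdots+j_n\in\{0,\ldots,k^{n+1}-k^n-1\}$, so that $m':=k^n+m$ has base-$k$ expansion $(m')_k=j_0j_1\cdots j_n$. This yields a bijection between the index set and the integers $m'\in[k^n,k^{n+1})$.

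It remains to identify each matrix weight with $g(m')$. Since $j_0\neq 0$, the vector ${\bf w}^T{\bf B}_{j_0}$ lies in $V^T$, and iterated application of \eqref{EQ:tilde-replacement} converts each intermediate factor freely between ${\bf B}_{j_r}$ and $\widetilde{\bf B}_{j_r}$ for $r\geqslant 1$ while staying in $V^T$. Because ${\bf x}\in\widehat{V}$ and this subspace is invariant under each $\widetilde{\bf B}_i$, a second pass using \eqref{EQ:hat-replacement} converts each $\widetilde{\bf B}_{j_r}$ acting from the right on ${\bf x}$ into the corresponding $\widehat{\bf B}_{j_r}$. Combining the two,
\[
{\bf w}^T{\bf B}_{j_0}\widehat{\bf B}_{j_1}\cdots\widehat{\bf B}_{j_n}{\bf x} \;=\; {\bf w}^T{\bf B}_{j_0}{\bf B}_{j_1}\cdots{\bf B}_{j_n}{\bf x} \;=\; {\bf w}^T{\bf B}_{(m')_k}{\bf x} \;=\; g(m').
\]
Substituting this back into the expansion and collecting the scalar prefactors, the right-hand side reduces to $\rho^{-n}\sum_{m'=k^n}^{k^{n+1}-1} g(m')\,\delta^{}_{(m'-k^n)/(k^n(k-1))}$, which is precisely $\mu_n$.

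I expect the main obstacle to be not conceptual but organisational: keeping track of matrix-valued convolutions, of the order in which the $\delta_{j_r/(k^r(k-1))}$ combine on the torus, and of the direction in which the subspace-restriction relations \eqref{EQ:tilde-replacement} and \eqref{EQ:hat-replacement} are invoked so that matrix-vector products are never evaluated on vectors outside the relevant invariant subspace. Once the digit bookkeeping is in place, the identification with the base-$k$ expansion of $m'$ makes the rest of the argument routine.
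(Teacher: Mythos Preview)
Your proposal is correct and follows essentially the same route as the paper: expand the convolution product into a sum of Dirac measures indexed by $(j_0,\ldots,j_n)$, match positions via the base-$k$ expansion of $m'=k^n+m$, and identify the scalar weight with $\rho^{-n}g(m')$ using $\Sigma_g(n)=\rho^n$. Your treatment is in fact slightly more explicit than the paper's, which asserts the weight identity ${\bf w}^T{\bf B}_{i_0}\widehat{\bf B}_{i_1}\cdots\widehat{\bf B}_{i_n}{\bf x}=g(m)$ without spelling out the passage through \eqref{EQ:tilde-replacement} and \eqref{EQ:hat-replacement}; your justification that ${\bf w}^T{\bf B}_{j_0}\in V^T$ (since $j_0\neq 0$) and ${\bf x}\in\widehat{V}$ is exactly what is needed to make that step rigorous.
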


\begin{proof}
Since the representation is reduced, we have $\Sigma_g(n) = \rho^n$ and hence 
\[
\mu_n =  \sum_{m = k^n}^{k^{n+1} - 1} \rho^{-n} g(m) \delta_{q(m)}
\]
where $q(m) = (m-k^n)/(k^n(k-1))$.
The convolution product is a finite sum of Dirac measures, positioned at
\[
p(i_0 \cdots i_n) = \frac{i_0 - 1}{k-1} + \frac{i_1}{k(k-1)} + \ldots + \frac{i_n}{k^n(k-1)},
\]
 with weights
\[
\rho^{-n} {\bf w}^T {\bf B}_{i_0} \widehat{\bf B}_{i_1} \cdots \widehat{\bf B}_{i_n} {\bf x} = \rho^{-n} g(m),
\]
where $k^n \leqslant m < k^{n+1}$ is the unique integer with $(m)_k = i_0 \cdots i_n$. With this notation, may rewrite $q(m)$ as
\[
q(m) = \frac{1}{k^n(k-1)} \sum_{r=0}^{n} (i_r - \delta_{r,0}) k^{n-r} = p((m)_k),
\] 
which shows that $\mu_n$ and the convolution product consist of Dirac measures at the same positions with identical weights.
\end{proof}

We already know that $\mu_n$ converges weakly to the unique ghost measure of $g$. However, a priori, it is not clear that the same holds for the corresponding matrix convolution product without the projection via ${\bf w}$ and ${\bf x}$. The remainder of this section is devoted to showing that this infinite convolution product indeed exists. 

\begin{proposition}[Structure]\label{prop:mainKmu} Let $({\bf w},\mathcal{B},{\bf x})$ be the reduced representation of a sequence $g$. Then, the unique ghost measure of $g$ satisfies
 \begin{equation}\label{eq:convprod} 
\mu_g={\bf w}^T({\bf B}-{\bf B}_0) \big(\delta_{1/(k-1)}\big)*\bigg(\bigast_{n=1}^{\infty} \widehat{\bf B} \big(\delta_{1/k^n(k-1)}\big)\bigg)\, {\bf x}.
 \end{equation} 
\end{proposition}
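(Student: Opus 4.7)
The plan is to invoke the previous lemma, which already writes the approximant $\mu_n$ as ${\bf w}^T({\bf B}-{\bf B}_0)(\delta_{1/(k-1)}) * \nu_n \cdot {\bf x}$ for the matrix-valued measure $\nu_n := \bigast_{r=1}^n \widehat{\bf B}(\delta_{1/k^r(k-1)})$. Since $({\bf w},\mathcal{B},{\bf x})$ is a reduced representation, $g$ has a unique ghost measure and $\mu_n \to \mu_g$ weakly. Convolution on the left with a fixed (vector-valued) measure and application of a fixed vector on the right are weakly continuous operations, so it suffices to prove that the matrix-valued measures $(\nu_n)_{n \in \N}$ converge weakly to some matrix-valued measure $\nu_{\infty}$ on $\TT$; then the identity holds with $\bigast_{n=1}^{\infty} \widehat{\bf B}(\delta_{1/k^n(k-1)}) := \nu_\infty$.

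To establish weak convergence, I would use a Cauchy argument based on a tail-splitting. For every $m \leqslant n$, factor
\[
\nu_n \;=\; \nu_m * \nu_n^{(m)}, \qquad \nu_n^{(m)} \;:=\; \bigast_{r=m+1}^{n}\widehat{\bf B}(\delta_{1/k^r(k-1)}),
\]
so that $\nu_n^{(m)}$ is supported in $[0, k^{-m}]$ and has total mass $\nu_n^{(m)}(\TT) = (\rho^{-1}\widehat{\bf B})^{n-m}$. By Lemma~\ref{lem:B}, these total masses are uniformly bounded in operator norm and converge to a projector $\Pi$ onto the $\rho$-eigenspace of $\widehat{\bf B}$ as $n-m \to \infty$. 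A similar uniform bound applies to $\|\nu_m(\TT)\|$.

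For any $F \in C(\TT)$ with modulus of continuity $\omega_F$, the decomposition above gives
\[
\int F \, d\nu_n \;=\; \int F(x)\, d\nu_m(x) \cdot (\rho^{-1}\widehat{\bf B})^{n-m} \;+\; E_{m,n}(F),
\]
where $\|E_{m,n}(F)\| \leqslant \omega_F(k^{-m}) \cdot \|\nu_m(\TT)\| \cdot \|\nu_n^{(m)}(\TT)\|$ is bounded by a constant multiple of $\omega_F(k^{-m})$, uniformly in $n$. Combining this with the convergence $(\rho^{-1}\widehat{\bf B})^{n-m} \to \Pi$, a standard triangle-inequality argument shows that $\int F \, d\nu_n$ is Cauchy as a sequence of matrices, hence converges. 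This forces weak convergence $\nu_n \to \nu_\infty$ for a matrix-valued Borel measure $\nu_\infty$ on $\TT$, which by definition is the infinite convolution product on the right-hand side of \eqref{eq:convprod}. Applying weak continuity of left-convolution with ${\bf w}^T({\bf B}-{\bf B}_0)(\delta_{1/(k-1)})$ and of right-multiplication by ${\bf x}$, then invoking uniqueness of weak limits, yields the desired identity.

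The main technical obstacle is the noncommutativity of matrix convolution, which prevents direct application of scalar infinite-convolution results. This is overcome by the cone-invariance and uniform norm bound from Lemma~\ref{lem:B}, which control the tail products $(\rho^{-1}\widehat{\bf B})^{n-m}$ regardless of their internal ordering, and by exploiting that the tail $\nu_n^{(m)}$ lives in an interval of diameter $k^{-m}$ so that any continuous test function is essentially constant on its support for large $m$.
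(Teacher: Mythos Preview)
Your approach is correct and genuinely different from the paper's. The paper works on the Fourier side via L\'evy's continuity theorem: it shows that the matrix products $\Pi_1^N(m)=\prod_{r=1}^{N-1}\widehat{\bf B}\bigl(e^{-2\pi i m/(k^r(k-1))}\bigr)$ converge for every integer frequency $m$, by a telescoping estimate against $(\rho^{-1}\widehat{\bf B})^{N-M}$ combined with the uniform bound on partial products. You instead stay in the measure picture, split off the tail, and run a Cauchy argument against arbitrary continuous test functions using the modulus of continuity. Both proofs rest on the same two ingredients---Lemma~\ref{lem:B} and the reverse triangle inequality for cone-preserving families---so neither is more elementary. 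The Fourier route connects more directly to the Bernoulli-convolution tradition and is the natural setting for later spectral arguments; your route is self-contained and avoids invoking L\'evy.

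One imprecision worth flagging: the bound $\|E_{m,n}(F)\|\leqslant\omega_F(k^{-m})\,\|\nu_m(\TT)\|\,\|\nu_n^{(m)}(\TT)\|$ is not justified as stated. Expanding the convolution, the triangle inequality gives you $\omega_F(k^{-m})\sum_{I,J}\rho^{-n}\|\widehat{\bf B}_I\widehat{\bf B}_J\|$, which is controlled by the product of \emph{total variations} of $\nu_m$ and $\nu_n^{(m)}$, not by the norms of their total masses. These coincide only up to the constant $c^{-1}$ from Corollary~\ref{cor:Bcone}; it is precisely the cone-preservation that lets you pass from $\sum_I\|\widehat{\bf B}_I\|$ to $c^{-1}\|\widehat{\bf B}^m\|$ and then invoke Lemma~\ref{lem:B}. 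So the role of cone-invariance is to control the total variation of the matrix-valued approximants, not merely the tail power $(\rho^{-1}\widehat{\bf B})^{n-m}$ as your final paragraph suggests. With this correction the argument goes through unchanged.
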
 

Proposition~\ref{prop:mainKmu} comes as a consequence of L\'evy's continuity theorem, which states that a uniformly bounded sequence of measures $\mu_N$ on $\mathbb{T}$ weakly converges to $\mu$ if and only if for each $m\in\mathbb{Z}$, $\lim_{N\to\infty}\FT[{\mu_N}](m)$ exists and equals $\FT[\mu](m)$, where $\FT$ denotes the standard Fourier transform.

We require the following lemma concerning the Fourier transform.

\begin{lemma} For any positive integers $M< N$, the matrix products $$\Pi_{M}^{N}(t):=\prod_{n=M}^{N-1} \widehat{\bf B}\big(e^{-\frac{2\pi i t}{k^n(k-1)}}\big),$$ as a function of $t$, are uniformly bounded.
\end{lemma}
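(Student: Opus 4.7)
The plan is to exploit the cone-preserving structure of the family $\widehat{\bf B}_j$ together with Corollary~\ref{cor:Bcone} (the reverse triangle inequality for cone-preserving matrices), and then invoke Lemma~\ref{lem:B} to bound the resulting power of $\widehat{\bf B}/\rho$. The key payoff of this approach is that the modulus-one phases coming from $t$ can be absorbed into a single ``triangle plus reverse triangle'' step, so that the final bound is independent of $t$, $M$, and $N$ simultaneously.

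First I would expand $\Pi_M^N(t)$ by distributing the sums defining each factor $\widehat{\bf B}(\cdot)$. Writing $\zeta_w(t) := \prod_{r=0}^{N-M-1} e^{-2\pi i\, w_r t/(k^{M+r}(k-1))}$ for each word $w = w_0 w_1 \cdots w_{N-M-1}$ over the alphabet $\{0,1,\ldots,k-1\}$, and setting $\widehat{\bf B}_w := \widehat{\bf B}_{w_0} \widehat{\bf B}_{w_1} \cdots \widehat{\bf B}_{w_{N-M-1}}$, we obtain
\[
\Pi_M^N(t) \;=\; \frac{1}{\rho^{N-M}} \sum_{w \in \{0,\ldots,k-1\}^{N-M}} \zeta_w(t)\, \widehat{\bf B}_w.
\]
Since $|\zeta_w(t)| = 1$ for every word $w$ and every $t$, the ordinary triangle inequality (in any submultiplicative matrix norm $\|\cdot\|$) gives
\[
\|\Pi_M^N(t)\| \;\leqslant\; \frac{1}{\rho^{N-M}} \sum_{w} \|\widehat{\bf B}_w\|,
\]
and this upper bound is already independent of $t$.

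Next I would fix a vector-induced matrix norm $\|\cdot\|$ and apply Corollary~\ref{cor:Bcone} to the collection $\{\widehat{\bf B}_w\}_w$. Each $\widehat{\bf B}_w$ preserves the cone $K \subset \widehat{V}$ supplied by Lemma~\ref{LEM:cone-condition} (as a product of matrices that each preserve $K$), so there exists $c > 0$, depending only on the norm and the cone, such that
\[
\sum_w \|\widehat{\bf B}_w\| \;\leqslant\; \frac{1}{c}\, \Bigl\| \sum_w \widehat{\bf B}_w \Bigr\| \;=\; \frac{1}{c}\, \|\widehat{\bf B}^{N-M}\|,
\]
where the final equality follows because $\sum_w \widehat{\bf B}_w$ is exactly the expansion of $(\sum_{j=0}^{k-1}\widehat{\bf B}_j)^{N-M} = \widehat{\bf B}^{N-M}$. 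Combining this with the previous display and Lemma~\ref{lem:B} yields
\[
\|\Pi_M^N(t)\| \;\leqslant\; \frac{1}{c}\, \|(\widehat{\bf B}/\rho)^{N-M}\| \;\leqslant\; \frac{B}{c},
\]
so $B/c$ is a bound uniform in $M$, $N$ and $t$, as required.

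The main obstacle to avoid is the naive estimate that bounds each factor $\widehat{\bf B}(e^{-2\pi i t/(k^n(k-1))})$ separately: such factors are not contractions in any a priori canonical norm, so a term-by-term bound picks up an exponential factor $C^{N-M}$ and only produces estimates that grow with the number of factors (or, equivalently, with $|t|$ if one tries to optimise over splitting points). The cone-based reverse triangle inequality circumvents this by letting the non-cancellation of the oscillating phases be converted, in one step, into the single cumulative matrix $\widehat{\bf B}^{N-M}$, whose growth is then controlled by the Perron--Frobenius-type bound of Lemma~\ref{lem:B}.
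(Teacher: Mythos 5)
Your proposal is correct and follows essentially the same route as the paper's own proof: expand the product into a sum over words with unit-modulus phases, apply the triangle inequality to remove the $t$-dependence, then use the cone-based reverse triangle inequality (Corollary~\ref{cor:Bcone} via Lemma~\ref{LEM:cone-condition}) to collapse the sum of norms into $\bigl\|(\widehat{\bf B}/\rho)^{N-M}\bigr\|$, which Lemma~\ref{lem:B} bounds by $B$. Your choice of a vector-induced norm even matches the hypothesis of Corollary~\ref{cor:Bcone} more carefully than the paper's stated Frobenius norm.
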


\begin{proof} Let $S\subset\mathbb{T}$ and $\|\cdot\|$ be the standard Frobenius norm, that is, the square root of the sum of the squares of the entries. Using Lemma~\ref{LEM:cone-condition} and Corollary~\ref{cor:Bcone}, we have 
\begin{align*}\Big\|\Pi_{M}^{N}(t)\Big\|
&=\left\| \prod_{n=M}^{N-1} \widehat{\bf B}\big(e^{-\frac{2\pi i t}{k^n(k-1)}}\big)\right\| 
\leqslant \sum_{i_1,\ldots,i_{N-M}}\left\|\prod_{j=1}^{N-M}\frac{1}{\rho} \widehat{\bf B}_{i_j}\right\|\\
& \leqslant \frac{1}{c}\left\|\sum_{i_1,\ldots,i_{N-M}}\prod_{j=1}^{N-M}\frac{1}{\rho} \widehat{\bf B}_{i_j}\right\|=\frac{1}{c}\left\|\left(\frac{1}{\rho}\widehat{\bf B}\right)^{N-M}\right\|\leqslant \frac{B}{c},
\end{align*} 
where the sums range over all possible values $i_1,\ldots,i_{N-M}\in\{0,1,\ldots,k-1\}.$
\end{proof}

\begin{proof}[Proof of Proposition \ref{prop:mainKmu}] 
Let $m$ be an integer. Appealing to L\'evy's continuity theorem, to prove the result, it is enough to prove that the infinite product $\Pi_1^\infty(m)$ converges for each integer $m$.

To this end, write $\widehat{\bf B}_n(m):= \widehat{\bf B}\big(e^{-\frac{2\pi i m}{k^n(k-1)}}\big),$ and note that there is some constant $C=C(m)>0$ depending on $m$ such that $$\left\|\widehat{\bf B}_n(m)-\frac{1}{\rho}\widehat{\bf B}\right\|\leqslant \frac{C}{k^n}.$$ For convenience, we will suppress the dependence on $m$ in $\Pi_M^N = \Pi_M^N(m)$. Using that $\Pi_{M}^{P}=\Pi_{M}^{N}\Pi_{N}^{P}$ for $M\leqslant N\leqslant P$, along with a telescoping argument, we have
\begin{multline*}\left\|\Pi_{M}^{N}-\left(\frac{1}{\rho} \widehat{\bf B}\right)^{N-M}\right\| 
=\left\|\sum_{i=0}^{N-M-1}\left(\frac{1}{\rho}\widehat{\bf B}\right)^i\left(\widehat{\bf B}_{M+i}(m)-\frac{1}{\rho} \widehat{\bf B}\right)\Pi_{M+i+1}^{N}\right\|\\
\leqslant\sum_{i=0}^{N-M-1}\left\|\left(\frac{1}{\rho}\widehat{\bf B}\right)^i\right\|\left\|\widehat{\bf B}_{M+i}(m)-\frac{1}{\rho}\widehat{\bf B}\right\|\Big\|\Pi_{M+i+1}^{N}\Big\|\\
\leqslant \frac{B^2}{c}\sum_{i=0}^{N-M-1}\left\| \widehat{\bf B}_{M+i}(m)-\frac{1}{\rho}\widehat{\bf B}\right\|\leqslant \frac{B^2C}{c}\sum_{i=0}^{N-M-1}\frac{1}{k^{M+i}}<\frac{2B^2C}{k^M}.
\end{multline*} Combining this inequality with the fact that as $N\to\infty$, $(\rho^{-1}\widehat{\bf B})^{N-M}$ converges, we obtain $$\lim_{N,N'\to\infty}\Big\|\Pi_{M}^{N}-\Pi_{M}^{N'}\Big\|\leqslant \frac{E}{k^M}$$ for some constant $E=E(m)>0$ depending only on $m$. 

Finally, let $\varepsilon>0$ be given and $N$ be large enough so that $\varepsilon>EB/ck^N$ and $N\geqslant M\geqslant 1$. Then $$\lim_{P,P'\to\infty}\Big\|\Pi_{M}^{P}-\Pi_{M}^{P'}\Big\|=\lim_{P,P'\to\infty}\Big\|\Pi_{M}^{N}(\Pi_{N}^{P}-\Pi_{N}^{P'})\Big\|\leqslant \frac{B}{c}\cdot\frac{E}{k^N}<\varepsilon,$$ which implies the convergence of the infinite product.
\end{proof}

%%%%%%%%%%%%%%%%%%%%%%%%%%%%%%%%%%%%%%%%%%%%%%
\section{Spectral purity}\label{sec:ergspecpur}
%%%%%%%%%%%%%%%%%%%%%%%%%%%%%%%%%%%%%%%%%%%%%%

 Without any further restrictions, we cannot expect to have spectral purity for the unique ghost measure $\mu_g$ in the previous section. This is because it might be possible to decompose a (reduced) representation into several blocks that give rise to different spectral components.
% , see the discussion in Section~\ref{sec:conc}. 
To avoid this effect, we assume that the spectral radius $\rho$ is a \emph{simple} eigenvalue of $\widehat{\bf B}$.
Under this condition, we show that the unique ghost measure  $\mu_g$ is spectrally pure, that is, $\mu_g$ is either pure point, absolutely continuous, or singular continuous. 
For this, we leverage the structure of $\mu_g$ in Proposition~\ref{prop:mainKmu} and the one-dimensionality of the $\rho$-eigenspace of $\widehat{\bf B}$.

\begin{proof}[Proof of Theorem \ref{thm:specpure}]
First, given a vector-valued measure ${\boldsymbol \nu}$, let us define 
\[
K \colon {\boldsymbol \nu} \mapsto \widehat{\bf B}(\delta_{1/k(k-1)}) \ast \bigl( {\boldsymbol \nu} \circ S_k^{-1} \bigr),
\]
where $S_k \colon [0,1) \to [0,1), x \mapsto x/k$. Iterating this operator, we obtain,
\[
K^n({\boldsymbol \nu}) = {\boldsymbol \mu}_n \ast \bigl({\boldsymbol \nu} \circ S_k^{-n} \bigr),
\]
where
\[
{\boldsymbol \mu}_n = \bigast_{m=1}^n \widehat{\bf B}(\delta_{1/k^m(k-1)}).
\]
By the previous section, this convolution converges weakly and the limit ${\boldsymbol \mu}$ satisfies
\[
{\boldsymbol \mu} = \lim_{n \to \infty}\left( \bigast_{m=1}^n \widehat{\bf B}(\delta_{1/k^m(k-1)})
\ast
\bigast_{m= n+ 1}^{2n} \widehat{\bf B}(\delta_{1/k^m(k-1)})\right)
= {\boldsymbol \mu} {\bf P};
\]
compare the proof of Proposition~\ref{prop:mainKmu}.
At the same time, a direct calculation yields that
\[
\lim_{n \to \infty} {\boldsymbol \nu} \circ S_k^{-n}
= \delta_0 {\boldsymbol \nu}(\TT),
\]
where ${\boldsymbol \nu}(\TT)$ denotes the total mass vector.
Hence, 
\[
\lim_{n \to \infty} K^n({\boldsymbol \nu})
= {\boldsymbol \mu} {\bf P} {\boldsymbol \nu}(\TT)
\sim  {\boldsymbol \mu} {\bf x},
\]
since ${\bf x}$ is the unique $\rho$-eigenvector of $\widehat{\bf B}$. 
Hence, if $K({\boldsymbol \nu}) = {\boldsymbol \nu}$, it follows that ${\boldsymbol \nu} \sim {\boldsymbol \mu} {\bf x}$. 
That is, $K$ has a one-dimensional eigenspace of measures, spanned by ${\boldsymbol \mu} {\bf x}$. 
It is straightforward to see, that $K$ maps a vector of pure point measures to a vector of pure point measures (and the same holds for the other spectral types). Hence, if $K({\boldsymbol \nu}) = {\boldsymbol \nu}$, we also get, using the linearity of $K$,
\[
K({\boldsymbol \nu}_{\bullet}) = {\boldsymbol \nu}_{\bullet}
\]
for each $\bullet \in \{{\rm pp,sc,ac} \}$. Since the solution space is one-dimensional, only one of the components can be nonzero.
 The same holds for $\mu_g$ since it can be written as the convolution product of ${\boldsymbol \mu} {\bf x}$ with a finite sum of vector-valued Dirac measures; see Proposition~\ref{prop:mainKmu}.
\end{proof}

\begin{remark}
The proof above is an analogue of the proof of spectral purity for (one-dimensional)
infinite convolution products (which include Bernoulli convolutions) and self-similar measures.  We briefly note that ghost measures are not self-similar in general. 
As an example, consider the ghost measure $\mu_z$ of the $2$-Zaremba sequence, the $2$-regular sequence $f$ admitting a linear representation
\[
{\bf u}={\bf v}^{T}=(1,0), \quad  {\bf B}_0=\begin{pmatrix}
1 & 1\\
1 & 0 
\end{pmatrix} \quad \text{and} \quad 
{\bf B}_1=\begin{pmatrix}
2 & 1\\
1 & 0 
\end{pmatrix}.
\]
Its ghost measure is neither self-similar nor a (scalar) infinite convolution. The graph of its distribution function is an affine image of a section of a three-dimensional self-affine measure; see \cite[Sec.~4]{CEGM}. If the matrices are $1\times 1$, $f$ is called a \emph{Salem sequence} and the resulting measure $\mu_f$ is self-similar; compare \cite[Prop.~4]{CEGM}. This case includes missing-digits measures and their weighted variations. \exend
\end{remark}

%%%%%%%%%%%%%%%%%%%%%%%%%%%%%%%%%%%%%%%%%%%%%%
\section{Strong finiteness property for pure point ghost measures}\label{sec:rational}
%%%%%%%%%%%%%%%%%%%%%%%%%%%%%%%%%%%%%%%%%%%%%%

In this section, we consider the special case that the $k$-regular sequence $f$ can be represented via nonnegative matrices. More precisely, let $({\bf u},\mathcal{A},{\bf v})$ be a minimal representation of $f$ and recall that $\widetilde{\bf A}_i$ is the restriction of ${\bf A}_i$ to the subspace
\[
V = {\rm span} \{ {\bf u}^T {\bf A}_{(m)_k} \}_{m \in \N}.
\]
We set $\widetilde{\mc A} = \{ \widetilde{\bf A}_0,\ldots, \widetilde{\bf A}_{k-1} \}$ and recall $\widetilde{\bf A} = \sum_{j=0}^{k-1} \widetilde{\bf A}_j$. 

\begin{definition}
We call a nondegenerate $k$-regular sequence $f$ \emph{positively presentable} if there exists a minimal representation of $f$ such that all matrices in $\widetilde{\mc A}$ and ${\bf v}$ are nonnegative (with respect to an appropriate basis of $V$).
\end{definition}

In the remainder of this section, we let $f$ be a positively presentable $k$-regular sequence with minimal representation $({\bf u},\mathcal{A},{\bf v})$. Also, we assume without loss of generality, that an eventual change of basis on $V$ has been performed and that all matrices in $\widetilde{\mc A}$ are nonnegative with respect to basis vectors ${\bf e}_i$, with $1\leqslant i \leqslant d'$, where $d' \in \{d-1,d\}$.

Some of the already established results on ghost measures take a slightly easier form in this case. We spell this out explicitly for the reader's convenience. Note that this result was previously recorded in the Introduction as Theorem \ref{thm:probnonneg}.

\begin{theorem}
\label{PROP:ghost-measures-in-rational-case}
Let $f$ be positively presentable and $q$ be the period of $\widetilde{\bf A}$. Then, the set of ghost measures is given by $\{ \mu_{f,1},\ldots \mu_{f,q} \}$, where
\[
\lim_{n \to \infty} \mu_{qn + j} = \mu_{f,j},
\]
for all $1\leqslant j \leqslant q$.
\end{theorem}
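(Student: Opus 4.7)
The plan is to deduce this theorem directly from the general Theorem \ref{THM:ghost-measure-group} by exploiting the fact that, in the positively presentable setting, the parameter group $G$ becomes finite cyclic. Since $\widetilde{\bf A} = \sum_{i} \widetilde{\bf A}_i$ is nonnegative with respect to the chosen basis of $V$, Perron--Frobenius theory applied to its peripheral spectrum ensures that every peripheral eigenvalue has the form $\rho \zeta$ for some root of unity $\zeta$. In particular, each coordinate $g_i$ of the generator $g = (g_1,\ldots,g_s) \in S^s$ is a root of unity, so the forward orbit $\{g^n : n \in \N\}$ is already closed and $G$ is a finite cyclic subgroup of $S^s$.

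The next step is to identify $|G|$ with the period $q$ of $\widetilde{\bf A}$. By definition of $q$, each peripheral eigenvalue satisfies $(g_i \rho)^q \in \R$, and Perron--Frobenius refinement (for block-triangular nonnegative matrices) upgrades this to $(g_i \rho)^q = \rho^q$, i.e.\ $g_i^q = 1$ for every $i$. Hence $g^q = (1,\ldots,1)$ is the identity in $G$, so $|G|$ divides $q$; minimality of the period in its defining property forces equality, as is already recorded in the text preceding the theorem.

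With $g^q = 1$ in hand, the map $n \mapsto g^n$ is $q$-periodic, and for each $j \in \{1,\ldots,q\}$ the subsequence $(g^{qn+j})_{n \in \N}$ is constantly equal to $g^j$. Its unique accumulation point in $G$ is therefore $g^j$, so by the final assertion of Theorem \ref{THM:ghost-measure-group} the limit $\lim_{n \to \infty} \mu_{qn+j}$ exists and equals $\mu_{g^j}$. Setting $\mu_{f,j} := \mu_{g^j} = \Psi_f(g^j)$ yields the convergence statement. Since $G = \{g^1, g^2, \ldots, g^q\}$, the parametrisation result of Theorem \ref{THM:ghost-measure-group} identifies the set of all ghost measures with $\Psi_f(G) = \{\mu_{f,1},\ldots,\mu_{f,q}\}$.

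The main obstacle is the second step, namely verifying that the group $G$ has cardinality precisely $q$ rather than a proper divisor or multiple of it. This requires the Perron--Frobenius structural fact that the peripheral spectrum of a nonnegative matrix is closed under multiplication by $q$-th roots of unity (where $q$ is the period), and that no peripheral eigenvalue satisfies $\lambda^q = -\rho^q$. Once this identification is in place, the proof reduces to a routine specialisation of the group-theoretic machinery already developed.
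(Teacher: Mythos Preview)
Your proposal is correct and follows essentially the same route as the paper's own proof: both arguments invoke Perron--Frobenius theory to conclude that the peripheral spectrum of the nonnegative matrix $\widetilde{\bf A}$ is cyclic (so that $\sigma_p/\rho$ is the group of $q$-th roots of unity), identify the parameter group $G$ with this cyclic group of order $q$, and then read off the convergence of the $q$ arithmetic subsequences directly from Theorem~\ref{THM:ghost-measure-group}. Your treatment of the identification $|G|=q$ is somewhat more explicit than the paper's, which simply asserts $\sigma_p/\rho = G_q$; note, however, that the ``upgrade'' from $\lambda^q \in \R$ to $\lambda^q = \rho^q$ relies on reading the paper's period definition as the order of the cyclic group $\sigma_p/\rho$ (equivalently, the smallest $n$ with $\lambda^n = \rho^n$) rather than the literal condition $\lambda^n \in \R$, which would give half that value when the standard period is even.
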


\begin{proof}
Since $\widetilde{\bf A}$ is assumed to be nonnegative, the corresponding peripheral spectrum $\sigma_p$ is cyclic, that is, $\sigma_p/\rho$ is the group $G_q$ of the $q$-th roots of unity. From this, it is straightforward to see that the rotation group $G$, generated by $g$, is also cyclic and in fact isomorphic to $G_q$. Hence, by Theorem~\ref{THM:ghost-measure-group}, the ghost measures can be parametrised by $G_q$, which has cardinality $q$.
Finally, recall from the proof of Theorem~\ref{THM:ghost-measure-group} that $(\mu_{n_j})_{j \in \N}$ converges to a ghost measure if $(g^{n_j})_{j \in \N}$ converges in $G$. This shows the convergence of the $q$-periodic subsequences $(\mu_{qn + j})_{n \in \N}$.
\end{proof}

Similar to the evolution of Markov processes, the structure of matrix products over the finite set $\widetilde{\mc A}$ can be described via an associated graph. We introduce the appropriate notions in the following and recall some basic notation. 

Let $n \in \N$ and $\C{C} = ( {\bf C}_0,\ldots, {\bf C}_{n-1} )$ be a finite tuple of nonnegative matrices on $\R^{d'}$.  
We assign a directed (multi-)graph $G(\C{C})$ as follows. There are $d'$ vertices, representing the $d'$ elementary unit vectors ${\bf e}_i$, $i=1,\ldots,d'$. There is an edge from $i$ to $j$, labeled by $m$, if and only if $({\bf C}_m)_{ij} > 0$. In this case, we call $({\bf C}_m)_{ij}$ the {\em weight} of the edge $(i,j;m)$. 
A {\em path} from vertex $i$ to vertex $j$ in $G(\C{C})$ is a valid concatenation of edges (every edge ends at the starting point of the next edge), starting from $i$ and ending at $j$. The weight of this path is the product of the weights of the corresponding edges. The label of the path is given by the ordered collection of the edge labels. Hence,
\[
{\bf e}_i^T {\bf C}_{i_1} \cdots {\bf C}_{i_n} {\bf e}_j
\]
is the sum of all weights over paths from $i$ to $j$ that are labeled by $(i_1,\ldots,i_n)$.
We say that a vertex $j$ is {\em reachable} from a vertex $i$ if there is a path from $i$ to $j$. By convention, every vertex is reachable from itself.
Two vertices $i,j$ are {\em strongly connected} if $i$ is reachable from $j$ and vice versa. Note that this defines an equivalence relation. Every corresponding equivalence class is called a \emph{strongly connected component} (SCC). An SCC is called {\em trivial} if it consists of a single vertex $i$ and there is no edge from $i$ to itself. 

There exists a (not uniquely determined) {\em topological ordering} of the SCCs $V_1,\ldots,V_r$ in such a manner that $V_m$ is not reachable from $V_n$ whenever $m < n$. For each SCC $V_i$ there is a corresponding {\em SCC subspace}
\[
W_i = \operatorname{span} \{ {\bf e}_j \}_{j \in V_i}.
\]

If ${\bf C} = \sum_{m=0}^{n-1} {\bf C}_m$, we note that $G(\C{C})$ and $G({\bf C}) : = G(\{\bf C\})$ have a similar structure. We obtain the edges in $G({\bf C})$ from the edges in $G(\C{C})$ by merging all edges that have the same source and target and summing the corresponding weights. In particular, the SCCs are the same for both graphs. Restricted to an SCC subspace $W_i$, the matrix ${\bf C}$ is either irreducible or vanishing. Permuting the Euclidean basis vectors according to the order of the SCCs, we obtain an (upper) block-triangular form
\begin{equation}
\label{EQ:upper-triangular}
{\bf C} = \begin{pmatrix}
{\bf C}_{11} & \cdots & {\bf C}_{1r}\\
\vdots & \ddots &\vdots \\
0 &  \cdots & {\bf C}_{rr}
\end{pmatrix},
\end{equation}
where ${\bf C}_{ii}$ is the restriction of ${\bf C}$ to $W_i$. By standard linear algebra, the collection of eigenvalues of ${\bf C}$ (with multiplicities) coincides with the union over the corresponding collections of eigenvalues of the ${\bf C}_{ii}$. In particular,
\[
\rho({\bf C}) = \max_{1\leqslant i \leqslant r} \rho({\bf C}_{ii}).
\]
We call an SCC $V_i$ \emph{dominant} if $\rho({\bf C}_{ii}) = \rho({\bf C})$. By the Perron--Frobenius theorem, this is a simple eigenvalue of ${\bf C}_{ii}$ and there is a corresponding eigenvector of ${\bf C}_{ii}$ with strictly positive entries on $W_i$. This eigenvector is unique up to a multiplicative constant. If $\rho({\bf C})$ has a nontrivial multiplicity, there are several dominant SCCs. 

Sometimes it will be useful to consider the graph associated to a higher power of $\C{\bf C}$.
We call $n$ a {\em return time} of the vertex $i$ if there is a path of lenth $n$ from $i$ to itself. The greatest common divisor of all return times is the {\em period} of a vertex. Note that the period is the same for all vertices in an SCC. We may therefore also speak of the period of an SCC. If $V_i$ is an SCC of period $p$, then it decays into $p$ subsets $V_{i,1}, \ldots, V_{i,p}$ which are SCCs of $G(\C{\bf C}^{np})$ for every $n \in \N$ (and not reachable from each other in $G(\C{\bf C}^{np})$). 
Taking $n$ large enough, we can even ensure that every pair of vertices in $V_{i,j}$ is connected by an edge in $G(\C{\bf C}^{np})$. This means that the restriction of $\C{\bf C}^{np}$ to $W_i$ decays into a direct sum of strictly positive matrices, each supported on one of the sets $V_{i,j}$; we refer to \cite{Seneta81} for background and a more detailed discussion.
Hence, there exists a power $r$ such that the restriction of $\C{\bf C}^r$ to each of its dominant SCC subspaces is strictly positive. We refer to this property of $\C{\bf C}^r$ as being {\em dominant positive}.

In the following, we use this graph-theoretic approach to construct examples of ghost measures with a Bragg peak at any given rational location. In order to illustrate the procedure, we begin with an example before treating the general case. 

\begin{example}
\label{EX:mu-delta}
Given $k =2$, consider the rational point $y = \pi(x)$ with $x= 1(10)^{\infty}$.
Our goal is to construct a $2$-regular sequence with a ghost measure that has a Bragg peak at $x$. This works in the following manner. We first construct a directed multi-graph $G$ such that we can obtain $x$ by following the labels of the edges and then choose an appropriate collection $\C{C}$ of nonnegative matrices such that $G = G(\C{C})$. Finally, the elements of $\mc C$ will be used to construct a linear representation. A working example for the present case would be
\begin{center}
\begin{tikzpicture}[->, shorten >=1pt, thick, main node/.style={circle,draw,font=\Large\bfseries}, scale = 0.8]

\node[main node](1) at (0,0) {${\bf e}_1$};
\node[main node](2) at (3,0) {${\bf e}_2$};
\node[main node](3) at (6,0) {${\bf e}_3$};

\draw (1) to node[above]{$1$} (2);
\draw (2) to[out = 30, in = 150] node[above]{$1$} (3);
\draw (3) to[out = 210, in = 330] node[below]{$0$} (2);

\end{tikzpicture}
\end{center}
The vertices represent the Euclidean unit vectors. Hence, we will require a $3$-dimensional space for this example. Note that $\{{\bf e}_2, {\bf e}_3\}$ is the only nontrivial strongly connected component. 
In the next step, we choose $\C{C} = ({\bf A}_0,{\bf A}_1)$ such that the graph above coincides with $G(\C{C})$.
This can be done by taking the corresponding incidence matrices
\[
{\bf A}_0 = \begin{pmatrix}
0 & 0 & 0\\
0 & 0 & 0\\
0 & 1 & 0
\end{pmatrix}
\qquad\mbox{and}\qquad
{\bf A}_1 = \begin{pmatrix}
0 & 1 & 0\\
0 & 0 & 1\\
0 & 0 & 0
\end{pmatrix},
\]
Let $f$ be the $2$-regular sequence with linear representation $({\bf e}_1,\{{\bf A}_0,{\bf A}_1\},{\bf v})$, with ${\bf v} = (1,1,1)$. 
Note that for a fixed $n \in \N$
\[
{\bf e}_1^T {\bf A}_{i_1} \cdots {\bf A}_{i_{n}} {\bf v},
\]
corresponds to the number of paths with label $(i_1,\ldots,i_{n})$ that start from ${\bf e}_1$. By construction, this is nonzero if and only if $i_1 \cdots i_n$ is a prefix of $x$. That is, for $k^{n-1} \leqslant m < k^n$, we find $f(m) = 0$, unless $m$ is given by
\[
m_{n} = \sum_{j=1}^n x_j k^{n-j}.
\]
In particular, 
\[
\mu_{n-1} = \delta_{y_n},
\quad y_n = \frac{m_n - k^{n-1}}{(k-1)k^{n-1}} = \sum_{j=1}^n \frac{x_j - \delta_{j,1}}{(k-1)k^{j-1}}. 
\]
By definition of the coding $\pi$, this converges to $\pi(x) = y$ as $n \to \infty$ and thus
\[
\lim_{n \to \infty} \mu_n = \delta_y,
\]
which is hence the unique ghost measure for $f$.
\exend
\end{example}

The ideas above can easily be adapted to any rational number in $[0,1]$. 

\begin{proposition}
Let $y \in [0,1] \cap \Q$ and $k \in \N$ be given. Then, there exists a $k$-regular sequence $f$ such that $f$ has a unique ghost measure, given by $\delta_y$. 
\end{proposition}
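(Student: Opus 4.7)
The plan is to generalise the construction of Example~\ref{EX:mu-delta} by exploiting the fact that any rational $y \in [0,1]$ admits an eventually periodic coding: there exist integers $p \geqslant 0$ and $q \geqslant 1$ such that $y = \pi(x)$ with $x = x_1 \cdots x_p (x_{p+1} \cdots x_{p+q})^{\infty} \in \mathbb{X}_k$. I would then encode the task of reading off the digits of $x$ one at a time as a walk along a directed multi-graph.

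Specifically, I would build a ``lollipop'' graph on $d := p+q$ vertices $\mathbf{e}_1,\ldots,\mathbf{e}_d$: for each $1 \leqslant j \leqslant d-1$ place an edge from $\mathbf{e}_j$ to $\mathbf{e}_{j+1}$ with label $x_j$, and close the cycle with an edge from $\mathbf{e}_d$ to $\mathbf{e}_{p+1}$ with label $x_d$. Each vertex has a unique outgoing edge, so even though vertices on the cycle are revisited, no labelling ambiguity arises: periodicity of $x$ past position $p$ makes the construction self-consistent. I would then let $\mathbf{A}_i$ be the incidence matrix for edges labelled $i$, for $0 \leqslant i \leqslant k-1$, and take the representation $(\mathbf{u},\mathcal{A},\mathbf{v})$ with $\mathbf{u} = \mathbf{e}_1$ and $\mathbf{v} = (1,\ldots,1)^T$.

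Next, I would verify that $\mathbf{u}^T \mathbf{A}_{(m)_k} \mathbf{v}$ counts the length-$n$ paths in the graph starting at $\mathbf{e}_1$ whose labels spell out $(m)_k$, and therefore equals $1$ precisely when $(m)_k$ is a prefix of $x$ and $0$ otherwise. It follows that $\Sigma_f(N) = 1$ for every $N \geqslant 0$, so every approximant $\mu_N$ is well-defined and is in fact a single Dirac mass at
\[
y_N = \sum_{j=1}^{N+1} \frac{x_j - \delta_{j,1}}{(k-1)k^{j-1}}.
\]
Since $y_N \to \pi(x) = y$ by definition of $\pi$, the weak convergence $\delta_{y_N} \to \delta_y$ is immediate, and $\delta_y$ is thus the unique ghost measure of $f$.

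I do not expect any genuine obstacle, but two minor points warrant care: for doubly-coded points $y \in \frac{1}{k-1}\Z[k^{-1}]$ (including the endpoints $y=0$ and $y=1$) one may simply pick either of the two codings, both being eventually periodic; and to apply the formalism of Section~\ref{sec:prop} verbatim one needs $x_1 \in \{1,\ldots,k-1\}$, which is already built into the definition of $\mathbb{X}_k$. With these small points addressed, the construction goes through uniformly in $y$ and $k$.
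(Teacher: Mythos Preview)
Your proposal is correct and follows essentially the same approach as the paper: both build a ``lollipop'' graph on $p+q$ vertices encoding the eventually periodic coding of $y$, take the incidence matrices of the edge labels as the $\mathbf{A}_i$, set $\mathbf{u} = \mathbf{e}_1$ and $\mathbf{v} = (1,\ldots,1)^T$, and conclude by the identical computation that $\mu_N = \delta_{y_N} \to \delta_y$. The only cosmetic difference is in the parametrisation of the eventually periodic word (the paper writes $x = u\tilde{u}^\infty$ with $\tilde{u}$ a suffix of $u$), and you are slightly more explicit about the doubly-coded endpoints.
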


\begin{proof}
The ideas are the same as for the example above. Recall that $y \in [0,1]$ is rational if and only if it has a coding $x \in \pi^{-1}(y)$ that is eventually periodic. If there are two possible codings, we pick one arbitrarily. Hence $x$ is of the form
\[
x = u \tilde{u}^{\infty}
\]
for some $u = u_1 \cdots u_{\ell}$ and some suffix $\tilde{u} = u_r \cdots u_{\ell}$ of $u$. We construct a multi-graph $G$ with vertices ${\bf e}_1, \ldots, {\bf e}_{\ell}$ and introduce an edge from ${\bf e}_i$ to ${\bf e}_{i+1}$, labeled by $u_i$ for all $1\leqslant i \leqslant \ell - 1$. Further, we connect ${\bf e}_{\ell}$ to ${\bf e}_r$ with an edge labeled by $u_{\ell}$. This ensures that there is precisely one path of length $n \in \N$ in $G$ starting from ${\bf e}_1$, labeled by the corresponding prefix of $x$. 
For $0 \leqslant i \leqslant k-1$, let ${\bf A}_i$ be the incidence matrix of the edges with label $i$, that is, we set $({\bf A}_i)_{st} = 1$ if there is an edge from $s$ to $t$, labeled by $i$ and $({\bf A}_i)_{st} = 0$ otherwise. 
Now, let $f$ be the $k$-regular sequence with linear representation $({\bf e}_1,\mathcal{A},{\bf v})$, where ${\bf v} = (1,\ldots,1)^T$. Precisely as in Example~\ref{EX:mu-delta}, it follows that the ghost measure approximants are of the form $\mu_n = \delta_{y_n}$ with $\lim_{n \to \infty} y_n = y$ which concludes the proof.
\end{proof}

In the opposite direction, we will show that no ghost measure can have a Bragg peak at an irrational position. To this end, we regard $G(\C{C})$, where
$\mc C$ is given by
\begin{equation}
\label{EQ:C-choice}
\mc C = (\widetilde{\bf A}_0,\ldots,\widetilde{\bf A}_{k-1} ),
\end{equation}
in which case ${\bf C} = \widetilde{\bf A}$. 
We emphasise that the results in the remainder of this section hold for completely arbitrary collections of nonnegative matrices. The notational convention taken in \eqref{EQ:C-choice} is simply for the sake of definiteness and to build a bridge to the application we have in mind. 

To simplify the setting, we first restrict to the case that $\widetilde{\C{\bf A}}$ is dominant positive. In what follows, by $\widetilde{\C{A}}^n$, we mean the set $$\widetilde{\C{A}}^n:=\big\{\widetilde{\bf A}_{i_1} \cdots \widetilde{\bf A}_{i_n} : \widetilde{\bf A}_{i_j} \in \widetilde{\C{A}} \mbox{ for all } 1 \leqslant j \leqslant n \big\}.$$
For the following result, recall the notation for the block triagonal structure of the matrix $\widetilde{\bf A} = {\bf C}$ in \eqref{EQ:upper-triangular}.

\begin{lemma}
\label{LEM:block-dichotomy}
Assume that ${\bf C}_{ii}$ is strictly positive. Then, we have the following dichotomy. Either, there is some $j$ such that $\widetilde{{\bf A}}_j |_{W_i} = {\bf C}_{ii}$, or there is a constant $c\in(0,1)$ such that for every ${\bf D} \in \widetilde{\C{A}}^3$,
\[
{\bf D}|_{W_i} \leqslant c \cdot{\bf C}^3_{ii},
\] 
where this inequality is to be understood element-wise. In particular, in the former case there is an element ${\bf D}' \in \widetilde{\mc A}^3$ such that ${\bf D}'|_{W_i} = {\bf C}_{ii}^3$.
\end{lemma}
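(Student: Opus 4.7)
The plan is to reduce the claim to a combinatorial statement about weighted paths of length three in the restricted graph on $V_i$. For shorthand, write $M_\ell := (\widetilde{\bf A}_\ell)|_{W_i}$ for the $V_i \times V_i$ block. Because the topological ordering renders all $\widetilde{\bf A}_\ell$ block upper triangular with the same block pattern, diagonal blocks multiply independently, so for any ${\bf D} = \widetilde{\bf A}_{i_1} \widetilde{\bf A}_{i_2} \widetilde{\bf A}_{i_3}$ one has ${\bf D}|_{W_i} = M_{i_1} M_{i_2} M_{i_3}$. Moreover $\sum_\ell M_\ell = {\bf C}_{ii}$ and each $M_\ell \geqslant 0$, so
\[
\sum_{(j_1, j_2, j_3)} M_{j_1} M_{j_2} M_{j_3} \;=\; {\bf C}_{ii}^3
\]
decomposes the strictly positive matrix ${\bf C}_{ii}^3$ as a sum of nonnegative terms. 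The first alternative of the dichotomy is then immediate: if $M_j = {\bf C}_{ii}$ for some label $j$, then $M_\ell = 0$ for $\ell \neq j$ and ${\bf D}' := \widetilde{\bf A}_j^3$ satisfies ${\bf D}'|_{W_i} = M_j^3 = {\bf C}_{ii}^3$.

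Assume now that $M_j \neq {\bf C}_{ii}$ for every label $j$. Since there are only finitely many triples and finitely many matrix entries, it suffices to show that each ratio
\[
r_{(i_1, i_2, i_3),(s,t)} \;:=\; \frac{(M_{i_1} M_{i_2} M_{i_3})_{st}}{({\bf C}_{ii}^3)_{st}}
\]
is strictly less than one; the maximum $c$ of these finitely many ratios then lies in $(0,1)$ and gives ${\bf D}|_{W_i} \leqslant c \cdot {\bf C}_{ii}^3$ entry-wise for every ${\bf D} \in \widetilde{\mc A}^3$. The key combinatorial step interprets $({\bf C}_{ii}^3)_{st}$ as the total weight of all length-three paths $s \to u \to v \to t$ in the restricted multigraph, sorted by label sequence. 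Since $M_{i_2}$ is dominated by ${\bf C}_{ii}$ but not equal to it, there are indices $u_0, v_0$ and a label $\ell_2 \neq i_2$ with $(M_{\ell_2})_{u_0 v_0} > 0$. Strict positivity of ${\bf C}_{ii}$ gives $({\bf C}_{ii})_{s,u_0} > 0$ and $({\bf C}_{ii})_{v_0,t} > 0$, from which some labels $\ell_1, \ell_3$ witness edges $s \to u_0$ and $v_0 \to t$. The resulting length-three path has label sequence $(\ell_1, \ell_2, \ell_3) \neq (i_1, i_2, i_3)$ and contributes a strictly positive amount to $({\bf C}_{ii}^3)_{st}$ that is disjoint from the contribution of $M_{i_1} M_{i_2} M_{i_3}$, forcing $(M_{i_1} M_{i_2} M_{i_3})_{st} < ({\bf C}_{ii}^3)_{st}$.

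The main obstacle I anticipate is the careful bookkeeping in the previous paragraph: one must verify that the middle edge of label $\ell_2 \neq i_2$ can indeed be flanked on both sides by compatible edges thanks to the strict positivity of ${\bf C}_{ii}$, and confirm that the resulting triple differs from $(i_1, i_2, i_3)$ regardless of the choices of $\ell_1, \ell_3$, which is automatic since the middle coordinates already differ. Everything else, namely the passage from entry-wise strict inequality at every $(s,t)$ to a uniform constant $c < 1$ via a finite maximum, and the identification of ${\bf D}'$ in the first alternative, is routine once this combinatorial step is secured.
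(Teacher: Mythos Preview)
Your proof is correct and follows essentially the same approach as the paper: both arguments exploit that the middle factor $M_{i_2}$ is strictly smaller than ${\bf C}_{ii}$ at some entry $(u_0,v_0)$, and then use strict positivity of ${\bf C}_{ii}$ to supply flanking entries $({\bf C}_{ii})_{s,u_0}$ and $({\bf C}_{ii})_{v_0,t}$ that force $(M_{i_1}M_{i_2}M_{i_3})_{st} < ({\bf C}_{ii}^3)_{st}$, after which a finite maximum over entries and triples yields the uniform constant $c<1$. The only cosmetic difference is that you phrase the estimate via the decomposition of $({\bf C}_{ii}^3)_{st}$ into label-triple summands (finding a disjoint positive summand with $\ell_2\neq i_2$), whereas the paper bounds the outer factors directly by ${\bf C}_{ii}$ and compares the middle factor; your explicit remark that ${\bf D}|_{W_i}=M_{i_1}M_{i_2}M_{i_3}$ via block upper-triangularity is a detail the paper leaves implicit.
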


\begin{proof}
Assume the first condition does not hold and let ${\bf E} = {\bf C}_{ii}$. Then, for every $j$ there exists a pair $(m_j,n_j)$ such that the corresponding entry of $\widetilde{\bf A}_j$ is strictly smaller than that of ${\bf E}$. For an arbitrary pair $(s,t) \in V_i^2$ we get
\[
({\bf E}^3)_{st} = \sum_{m,n} {\bf E}_{sm} {\bf E}_{mn} {\bf E}_{nt}.
\]
On the other hand, for ${\bf D} = \widetilde{\bf A}_{j_0} \widetilde{\bf A}_j \widetilde{\bf A}_{j_1} \in \C{A}^3$,
\[
{\bf D}_{st} \leqslant \sum_{m,n} {\bf E}_{sm} (\widetilde{\bf A}_j)_{mn} {\bf E}_{nt} < ({\bf E}^3)_{s,t},
\]
since ${\bf E}_{s m_j},{\bf E}_{n_j t} > 0$. Hence for all ${\bf D} \in \widetilde{\C{A}}^3$ and $(s,t) \in V_i$ there exists a constant $0 < c(s,t,{\bf D}) < 1$ such that ${\bf D}_{st} \leqslant c(s,t,{\bf D}) \cdot ({\bf E}^3)_{st}$. Taking the maximum over all triples $(s,t,{\bf D})$ yields the assertion.
\end{proof}

Now we turn to the case of general nonnegative matrices $\widetilde{\bf A}$.
Let $m'$ be minimal with the property that $\widetilde{\bf A}^{m'}$ is dominant positive. 
Let $m = 3 m'$ and observe that the dominant blocks of $\widetilde{\bf A}^m$ are just the cubes of the dominant blocks of $\widetilde{\bf A}^{m'}$. We denote the matrices appearing in the corresponding block decomposition of $\widetilde{\bf A}^m$ as ${\bf C}_{ij}^{(m)}$.
Let $\mc D$ denote all indices $i$ such that $V_i$ is a dominant SCC of $G(\widetilde{\bf A}^m)$. We can further decompose this set as follows. 

\begin{definition}
Let $V_i$ be a dominant SCC of $G(\widetilde{\bf A}^m)$. We say that $V_i$ is \emph{pure} if there exists a matrix ${\bf D} \in \widetilde{\C{A}}^m$ with ${\bf D}|_{W_i} = {\bf C}^{(m)}_{ii}$ and we call $V_i$ \emph{mixed} otherwise. The corresponding sets of indices are denoted by $\mc {D_P}$ and $\mc {D_M}$, respectively. 
\end{definition}

Recall that if $V_i$ is mixed, there is a $0 < c_i < 1$ with ${\bf D}|_{V_i} \leqslant c_i \cdot {\bf C}^{(m)}_{ii}$ (element-wise) for all ${\bf D} \in \C{A}^m$, due to Lemma~\ref{LEM:block-dichotomy}. 

The general property $\rho^*(\widetilde{\C{A}}) \leqslant \rho(\widetilde{\bf A})$ relies on the observation that $\widetilde{\bf A}_i \leqslant \widetilde{\bf A}$ element-wise for all $\widetilde{\bf A}_i \in \C{A}$, meaning that $\widetilde{\bf A}$ dominates all elements in $\widetilde{\C{A}}$ in an appropriate sense. If $i \in \mc {D_M}$, we can reduce the block ${\bf C}_{ii}^{(m)}$ by a factor without violating this domination property. 

\begin{definition}
Let $\widetilde{\bf A}'$ be the matrix that is formed from $\widetilde{\bf A}^m$ by replacing ${\bf C}_{ii}^{(m)}$ with $c_i \cdot {\bf C}_{ii}^{(m)}$ for all $i \in \mc {D_M}$. We call $\widetilde{\bf A}'$ the \emph{reduced dominator} of $\widetilde{\C{A}}^m$.
\end{definition}

By construction, $\rho^*(\widetilde{\C{A}}^m) \leqslant \rho(\widetilde{\bf A}')$. On the other hand, the modification from ${\bf C}_{ii}^{(m)}$ to $c_i\cdot {\bf C}_{ii}^{(m)}$ reduces the spectral radius of that block and hence, $\rho(\widetilde{\bf A}^m) = \rho(\widetilde{\bf A}')$ if and only if $\mc {D_P} \neq \varnothing$; this means that at least one of the dominant blocks remains untouched.

\begin{corollary}
Suppose that $\widetilde{\C{A}} = \{ \widetilde{\bf A}_0, \ldots, \widetilde{\bf A}_{k-1}\}$ is a finite set of nonnegative matrices with $\widetilde{\bf A} = \sum_{j=0}^{k-1} \widetilde{\bf A}_j$. Then $\rho^*(\widetilde{\C{A}}) = \rho( \widetilde{\bf A})$ if and only if $\mc {D_P}$ is nonempty. In this case, there exists a ${\bf D} \in \widetilde{\C{A}}^m$ with $\rho^*(\widetilde{\C{A}})= \rho({\bf D})^{1/m}$.
\end{corollary}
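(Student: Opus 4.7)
The plan is to combine three ingredients recorded in or immediately derivable from the text: (a) the elementwise domination ${\bf D} \leqslant \widetilde{\bf A}'$ for every ${\bf D} \in \widetilde{\C A}^m$, which upgrades to $\rho^*(\widetilde{\C A})^m \leqslant \rho(\widetilde{\bf A}')$; (b) the recorded observation that $\rho(\widetilde{\bf A}') = \rho(\widetilde{\bf A})^m$ if and only if $\mc{D_P} \neq \varnothing$; and (c) the standard lower bound $\rho({\bf D})^{1/m} \leqslant \rho^*(\widetilde{\C A})$ for any length-$m$ product ${\bf D}$, combined with the fundamental inequality $\rho^*(\widetilde{\C A}) \leqslant \rho(\widetilde{\bf A})$.

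For (a), each $\widetilde{\bf A}_j$ respects the topological block decomposition of $\widetilde{\bf A}$, so every ${\bf D} \in \widetilde{\C A}^m$ is block upper triangular with the same pattern and can be compared with $\widetilde{\bf A}'$ block by block. Off-diagonal blocks, and diagonal blocks of ${\bf D}$ at non-dominant or pure dominant SCCs, are bounded by the corresponding blocks of $\widetilde{\bf A}^m = \sum_{{\bf D}' \in \widetilde{\C A}^m} {\bf D}'$, which on these blocks coincide with $\widetilde{\bf A}'$. For a mixed dominant block $V_i$, I apply Lemma~\ref{LEM:block-dichotomy} to the collection $\widetilde{\C A}^{m'}$ (whose sum is $\widetilde{\bf A}^{m'}$ and whose dominant-block restriction ${\bf C}^{(m')}_{ii}$ is strictly positive by the definition of $m'$), obtaining ${\bf D}|_{W_i} \leqslant c_i\, {\bf C}^{(m)}_{ii} = \widetilde{\bf A}'|_{W_i}$ for every ${\bf D} \in (\widetilde{\C A}^{m'})^3 = \widetilde{\C A}^m$. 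The max row-sum norm is monotone on nonnegative matrices and submultiplicative in general, so the elementwise domination iterates to $\|{\bf D}_1 \cdots {\bf D}_p\|_\infty \leqslant \|(\widetilde{\bf A}')^p\|_\infty$ for all ${\bf D}_i \in \widetilde{\C A}^m$. Taking $p$-th roots and using Gelfand's formula together with $\rho^*(\widetilde{\C A})^m = \rho^*(\widetilde{\C A}^m)$ delivers the claim.

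For the forward direction of the equivalence, suppose $\mc{D_P} = \varnothing$; then (b) gives $\rho(\widetilde{\bf A}') < \rho(\widetilde{\bf A})^m$, and combined with (a) this yields $\rho^*(\widetilde{\C A}) < \rho(\widetilde{\bf A})$, which is the desired contrapositive. For the converse together with the attainment statement, pick any $i \in \mc{D_P}$ and choose ${\bf D} \in \widetilde{\C A}^m$ with ${\bf D}|_{W_i} = {\bf C}^{(m)}_{ii}$. Block upper triangularity of ${\bf D}$ forces $\rho({\bf D}) \geqslant \rho({\bf D}|_{W_i}) = \rho({\bf C}^{(m)}_{ii}) = \rho(\widetilde{\bf A})^m$, and chaining with (c) yields $\rho(\widetilde{\bf A}) \leqslant \rho({\bf D})^{1/m} \leqslant \rho^*(\widetilde{\C A}) \leqslant \rho(\widetilde{\bf A})$. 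Equality throughout gives simultaneously $\rho^*(\widetilde{\C A}) = \rho(\widetilde{\bf A})$ and $\rho^*(\widetilde{\C A}) = \rho({\bf D})^{1/m}$.

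The main obstacle is the identification of the pure/mixed dichotomy with the dichotomy supplied by Lemma~\ref{LEM:block-dichotomy}. One cannot apply that lemma directly to $\widetilde{\C A}$ itself, since the dominant-block restriction of its sum $\widetilde{\bf A}$ need not be strictly positive; instead one applies it to the collection $\widetilde{\C A}^{m'}$ (where strict positivity is guaranteed by the choice of $m'$), and notes that length-$3$ products in that collection are exactly the length-$m$ products in the original collection, so that the lemma's matching case cubes to a ${\bf D}' \in \widetilde{\C A}^m$ with ${\bf D}'|_{W_i} = {\bf C}^{(m)}_{ii}$, matching the definition of purity at level $m$.
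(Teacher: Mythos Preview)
Your argument is correct and follows essentially the same route as the paper's proof: both directions hinge on the chain $\rho({\bf D}) \leqslant \rho^*(\widetilde{\C A})^m \leqslant \rho(\widetilde{\bf A}')\ (\leqslant)\ \rho(\widetilde{\bf A})^m$, with the pure case giving a ${\bf D}$ that closes the chain to an equality and the empty case making the last inequality strict. Your write-up is in fact more explicit than the paper's in two places---you spell out the block-by-block verification of ${\bf D} \leqslant \widetilde{\bf A}'$ that the paper summarises as ``by construction'', and you make precise that Lemma~\ref{LEM:block-dichotomy} is being applied to the collection $\widetilde{\C A}^{m'}$ (with strictly positive dominant blocks) rather than to $\widetilde{\C A}$ itself---but these are elaborations of the same argument, not a different approach.
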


\begin{proof}
If $\mc {D_P}$ is nonempty, there exists a ${\bf D} \in \widetilde{\C{A}}^m$ such that ${\bf D}|_{V_i} = {\bf C}_{ii}^{(m)}$, where $\rho({\bf C}_{ii}^{(m)}) = \rho(\widetilde{\bf A}^m)$. Hence,
\[
\rho({\bf D}) \leqslant \rho^*(\widetilde{\C{A}})^m \leqslant \rho(\widetilde{\bf A})^m \leqslant \rho({\bf D}),
\]
which implies equality. On the other hand, if $\mc {D_P} = \varnothing$, we have
\[
\rho^*(\widetilde{\C{A}})^m = \rho^*(\widetilde{\C{A}}^m) \leqslant \rho(\widetilde{\bf A}') < \rho(\widetilde{\bf A})^m,
\]
separating the joint spectral radius of $\widetilde{\C{A}}$ from the spectral radius of $\widetilde{\bf A}$.
\end{proof}

In particular, we have now proven Corollary \ref{cor:fc}, that whenever $\rho^*(\widetilde{\C{A}}) = \rho(\widetilde{\bf A})$ for nonnegative matrices, the finiteness conjecture holds. The assumptions above do not quite suffice to conclude the stronger property that \emph{only} eventually periodic sequences of matrices can maximise the exponential growth rate of the norms.

\begin{example}
Let $\widetilde{\C{A}} = \{ \widetilde{\bf A}_0, \widetilde{\bf A}_1\}$ with
\[
\widetilde{\bf A}_0 = \begin{pmatrix}
1 & 1 & 0\\
0 & 0 & 1\\
0 & 0 & 0
\end{pmatrix}\qquad\mbox{and}\qquad
\widetilde{\bf A}_1 = \begin{pmatrix}
1 & 0 & 0\\
0 & \rho & 0\\
0 & 0 & \rho
\end{pmatrix},
\]
for some $\rho > 2$, which then also coincides with the spectral radius of the sum matrix $\widetilde{\bf A} = \widetilde{\bf A}_0 + \widetilde{\bf A}_1$. The graph $G(\widetilde{\mc A})$ is depicted below (without weights).
\begin{center}
\begin{tikzpicture}[->, shorten >=1pt, thick, main node/.style={circle,draw,font=\Large\bfseries}, scale = 0.8]

\node[main node](1) at (0,0) {${\bf e}_1$};
\node[main node](2) at (3,0) {${\bf e}_2$};
\node[main node](3) at (6,0) {${\bf e}_3$};

\draw (3) to[out = 30, in = -30, looseness=7] node[right, xshift= - 0.05cm] {$1$} (3) ;

\draw (1) to node[above]{$0$} (2);
\draw (2) to node[above]{$0$} (3);

\draw (1) to[out = 160, in = 200, looseness=6] node[left, xshift= 0.03cm] {$1$} (1);
\draw (1) to[out = 140, in = 220, looseness=9] node[left, xshift= 0.05cm] {$0$} (1);

\draw (2) to[out = 120, in = 60, looseness=6] node[above, yshift= -0.03cm] {$1$} (2);

\end{tikzpicture}
\end{center}
Since $\rho(\widetilde{\bf A}_1) = \rho(\widetilde{\bf A})$, we observe $\rho^*(\widetilde{\C{A}}) = \rho(\widetilde{\bf A}) = \rho$. A direct calculation yields
\[
{\bf B}_n: = \widetilde{\bf A}_1^n \widetilde{\bf A}_0 = 
\begin{pmatrix}
1 & 1 & 0\\
0 & 0 & \rho^n\\
0 & 0 & 0
\end{pmatrix},
\quad\mbox{and}\quad
{\bf B}_n {\bf B}_m  = 
\begin{pmatrix}
1 & 1 & \rho^m\\
0 & 0 & 0\\
0 & 0 & 0
\end{pmatrix}
=: {\bf C}_m,
\] 
from which we see that
\[
{\bf B}_{n_1} \cdots {\bf B}_{n_j} = {\bf C}_{n_j}.
\]
Also, we have
\[
{\bf C}_m \widetilde{\bf A}_1^r = \begin{pmatrix}
1 & \rho^r & \rho^{m+r}\\
0 & 0 & 0\\
0 & 0 & 0\\
\end{pmatrix}.
\]
Let $(n_j)_{j \in \N}$ be a sequence in $\N$ that grows fast enough to satisfy
\[
\lim_{j \to \infty} \frac{1}{n_j} \sum_{i = 1}^{j} n_i = 1.
\]
For notational convenience, we set $N_j := \sum_{i = 1}^{j} n_i$ and $N_0 := 0$.
The product
\[
{\bf B}_{n_1} {\bf B}_{n_2} {\bf B}_{n_3} \cdots
\]
corresponds to a sequence of matrices $(\widetilde{\bf A}_{i_j})_{j \in \N}$ in $\widetilde{\C{A}}$ that is not eventually periodic. Now, note that every $n \in \N$ is of the form $n = N_j + r$ for some $j \in \N_0$ and $0 \leqslant r < n_{j+1}$.
Choosing the matrix norm that corresponds to the maximal column sum (induced by $\|\cdot\|_1$), we obtain
\[
\| \widetilde{\bf A}_{i_1} \cdots \widetilde{\bf A}_{i_n}\| = \| {\bf B}_{n_1} \cdots {\bf B}_{n_j}  \widetilde{\bf A}^r\| = \| {\bf C}_{n_j} \widetilde{\bf A}^r\| = \rho^{n_j + r}
\]
and hence
\[
\| \widetilde{\bf A}_{i_1} \cdots \widetilde{\bf A}_{i_n}\|^{1/n} = \rho^{(n_j + r)/(N_j + r)} \geqslant \rho^{n_j/N_j} \xrightarrow{j \to \infty} \rho.
\]
Hence
\[
\liminf_{n \to \infty} \| \widetilde{\bf A}_{i_1} \cdots \widetilde{\bf A}_{i_n}\|^{1/n} \geqslant \rho.
\]
The same upper bound for the limit superior follows from the fact that $\rho=\rho(\widetilde{\bf A})$, so we have equality.\exend
\end{example}

To understand a bit of the intuition behind the above example, note that there are three SCCs (consisting of a single vertex each), feeding into each other in a linear order. The two terminal SCCs are dominant and require the matrix $\widetilde{\bf A}_1$ in order to ``activate". Both $\widetilde{\bf A}_0$ and $\widetilde{\bf A}_1$ allows one to remain in the first SCC and $\widetilde{\bf A}_0$ allows one to switch between SCCs. We have then constructed a sequence such that for every partial product $P$ the largest suffix $S$ containing only one switch makes up an increasingly large fraction of $P$. Hence, we may spend this suffix entirely in the dominant SCCs (with one switch) and the prefix in the first SCC.
We can slightly adapt this idea to obtain an example where the norm of the product of length $n$ grows even (polynomially) faster than $\rho^n$. For example, we may add one more entry $1$ in $\widetilde{\bf A}_1$ to switch between the two dominant SSCs and get a linear factor. If $n_j$ grows fast enough, this overcompensates the slower growth from the (small) prefix.

From the discussion above, it appears intuitive that the mechanism that allows the liminf to be fast is the ability to switch between two dominant SSCs. Indeed, excluding this possibility rules out that aperiodic products have the maximal exponential rate on all subsequences.
As before, let $m'$ be minimal such that $\widetilde{\bf A}^{m'}$ is dominant positive and $m = 3m'$.

\begin{proposition}
\label{PROP:aperiodic-products}
Suppose that $\rho^*(\widetilde{\C{A}}) = \rho(\widetilde{\bf A})$ and that there is no path in $G(\widetilde{\bf A}^m)$ from one dominant SCC $V_i$, with $i \in \mc {D_P}$, to another. Then, for every aperiodic sequence $(\widetilde{\bf A}_{i_j})_{j \in \N}$, we have
\[
\liminf_{n \to \infty} \|\widetilde{\bf A}_{i_1} \cdots \widetilde{\bf A}_{i_n}\|^{1/n} < \rho^*(\widetilde{\C{A}}).
\]
In particular, this holds if $\rho^*(\widetilde{\C{A}})$ is a simple eigenvalue of $\widetilde{\bf A}$.
\end{proposition}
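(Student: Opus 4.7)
My plan splits into three phases: a reduction to $\widetilde{\mc A}^m$, a structural analysis forced by the hypothesis, and an aperiodicity argument exploiting the structure. First I would replace $\widetilde{\mc A}$ by $\widetilde{\mc A}^m$ with $m=3m'$ as above. Grouping an aperiodic sequence $(\widetilde{\bf A}_{i_j})$ by $m$-blocks produces an aperiodic sequence $({\bf D}_l)$ in the alphabet $\widetilde{\mc A}^m$ (periodicity of the blocked sequence would give periodicity of the original), so it suffices to show $\liminf_{N\to\infty}\|{\bf D}_1\cdots{\bf D}_N\|^{1/(mN)}<\rho$. The crucial structural observation in $\widetilde{\mc A}^m$ is that for each $j\in\mc{D_P}$ the identity $\sum_{{\bf D}\in\widetilde{\mc A}^m}{\bf D}|_{W_j}={\bf C}_{jj}^{(m)}$, combined with the strict positivity of ${\bf C}_{jj}^{(m)}$, forces a \emph{unique} ${\bf D}^{(j)}\in\widetilde{\mc A}^m$ to satisfy ${\bf D}^{(j)}|_{W_j}={\bf C}_{jj}^{(m)}$, while every other ${\bf D}'\in\widetilde{\mc A}^m$ must satisfy ${\bf D}'|_{W_j}=0$. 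Denote $\mc P=\{{\bf D}^{(j)}:j\in\mc{D_P}\}$.

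Second, I would analyse the left action $v^T\mapsto v^T{\bf D}_1\cdots{\bf D}_N$ in block-upper-triangular form induced by a topological ordering of SCCs. The assumption that $G(\widetilde{\bf A}^m)$ has no path between distinct elements of $\mc{D_P}$ forces every trajectory through the SCC DAG to visit at most one pure dominant $V_{j^\ast}$, and staying $t$ consecutive steps in $V_{j^\ast}$ demands ${\bf D}^{(j^\ast)}$ in each of those $t$ positions. Elsewhere, Lemma~\ref{LEM:block-dichotomy} gives that each step on a mixed dominant block contracts by a factor $c<1$ relative to $\rho^m$, and non-dominant diagonal blocks have spectral radius strictly less than $\rho^m$. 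Tracking how these constraints propagate through the block-triangular multiplication---modelled on the analogous two-block calculation---I expect to derive a bound of the form
\[
\|{\bf D}_1\cdots{\bf D}_N\|\,\leqslant\,C\,\rho^{mN}\Bigl(\frac{\sigma}{\rho^m}\Bigr)^{q(N)},
\]
where $\sigma<\rho^m$ is a uniform constant and $q(N)$ is the largest index $l\leqslant N$ at which ${\bf D}_l$ breaks the current pure run: either ${\bf D}_l\notin\mc P$, or ${\bf D}_l\in\mc P$ but differs from the immediately preceding ${\bf D}^{(j)}$.

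Third, I would invoke aperiodicity. If $({\bf D}_l)$ is not eventually periodic, then either $\{l:{\bf D}_l\notin\mc P\}$ is infinite, or the sequence eventually lies inside $\mc P$ but still switches between distinct ${\bf D}^{(j)}$'s infinitely often; in either case, there is a subsequence $N_k\to\infty$ with $q(N_k)=N_k$. The displayed bound applied along this subsequence gives
\[
\|{\bf D}_1\cdots{\bf D}_{N_k}\|^{1/(mN_k)}\,\longrightarrow\,\sigma^{1/m}\,<\,\rho,
\]
so $\liminf<\rho$. The final sentence of the proposition is immediate, since a simple peripheral eigenvalue of $\widetilde{\bf A}$ forces $|\mc{D_P}|=1$, making the graph-theoretic hypothesis vacuous.

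I expect the main technical obstacle to be the displayed bound in the second paragraph. A priori, mass expelled from $W_{j^\ast}$ by a deviation can seep back into $W_{j^\ast}$ through off-diagonal entries of subsequent ${\bf D}^{(j^\ast)}$-matrices, and a naive bookkeeping appears to allow this seepage to restore the full rate $\rho^m$. The correct accounting exploits that left multiplication by any matrix outside $\mc P$ annihilates the $W_{j^\ast}$-row of the running vector, so any mass that later reappears in $W_{j^\ast}$ must originate from slower-growing $W_{j^\ast}^{\perp}$-components, producing exactly the geometric loss $(\sigma/\rho^m)^{q(N)}$ in the bound.
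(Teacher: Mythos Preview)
Your outline has the right skeleton but contains two genuine gaps. First, the claim that blocking an aperiodic index sequence into $m$-tuples yields an aperiodic sequence of matrices in $\widetilde{\mc A}^m$ is false: two distinct $m$-words may yield the same matrix product, so $({\bf D}_l)$ can be eventually constant even though $(i_j)$ is aperiodic. The paper handles this case separately by observing that if $({\bf D}_l)$ is eventually constant equal to ${\bf D}$, then $2{\bf D}\leqslant\widetilde{\bf A}^m$ entrywise, forcing $\rho({\bf D})<\rho^m$; so only the case that $({\bf D}_l)$ is not eventually constant remains. This is easily fixed, but it does mean that the only structure you may assume about $({\bf D}_l)$ is the existence of infinitely many $n$ with ${\bf D}_n\neq{\bf D}_{n+1}$.

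Second, and more seriously, the displayed bound $\|{\bf D}_1\cdots{\bf D}_N\|\leqslant C\rho^{mN}(\sigma/\rho^m)^{q(N)}$ with $q(N)$ the position of the \emph{last} deviation is too strong. Take ${\bf D}_1=\cdots={\bf D}_{N-1}={\bf D}^{(j)}$ and let ${\bf D}_N$ be a deviation with some nonzero off-diagonal block $({\bf D}_N)_{jk}$. Then $q(N)=N$, so your bound asserts $\|{\bf D}_1\cdots{\bf D}_N\|\leqslant C\sigma^N$; but the $(j,k)$-block of the product is $({\bf C}_{jj}^{(m)})^{N-1}({\bf D}_N)_{jk}$, which has norm of order $\rho^{m(N-1)}$. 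Your last paragraph claims that a deviation ``annihilates the $W_{j^\ast}$-row'', but this is exactly what fails: ${\bf D}_N$ has ${\bf D}_{N,jj}=0$ yet ${\bf D}_{N,jk}$ can be nonzero, so mass leaves $W_j$ rather than being destroyed, and symmetrically mass can enter $W_j$ from below through off-diagonal blocks. The paper avoids this difficulty by a different device: rather than seeking a bound valid for all $N$, it fixes one position $n$ with ${\bf D}_{n}\neq{\bf D}_{n+1}$ and estimates only the length-$2n$ product. Expanding $({\bf D}_1\cdots{\bf D}_{2n})_{st}$ as a sum over at most $n^r$ nondecreasing index paths, each path visiting some pure dominant $i$ must either confine that visit entirely to the first half, or to the second half, or satisfy $q_{n-1}=q_n=q_{n+1}=i$; in the last case one of ${\bf D}_{n,ii},{\bf D}_{n+1,ii}$ vanishes and the summand is zero, while in the first two cases the ``other'' half is dominated by $(\widetilde{\bf A}'')^n$. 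This yields $\|{\bf D}_1\cdots{\bf D}_{2n}\|\leqslant n^r r^2\|\widetilde{\bf A}^{nm}\|\,\|(\widetilde{\bf A}'')^n\|$ along the infinite set of such $n$, giving the liminf estimate directly. Finally, for the last sentence: a simple $\rho$ gives a single dominant SCC in $G(\widetilde{\bf A})$, which the $m$-th power may split into several pieces, but these pieces are mutually unreachable in $G(\widetilde{\bf A}^m)$; so $|\mc{D_P}|$ need not equal $1$, though the no-path hypothesis still holds.
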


\begin{proof}
Regrouping the matrices of an aperiodic sequence $(\widetilde{\bf A}_{i_j})_{j \in \N}$ into products of length $m$, we obtain a sequence $({\bf D}_{i_j})_{j \in \N}$ of matrices in $\widetilde{\C{A}}^m$. If this sequence is eventually constant, say equal to ${\bf D}$, then there must be $(i_1,\ldots,i_m) \neq (i_1',\ldots, i_m')$ such that
\[
{\bf D} = \widetilde{\bf A}_{i_1} \cdots \widetilde{\bf A}_{i_m} = \widetilde{\bf A}_{i'_1} \cdots \widetilde{\bf A}_{i'_m}.
\]
This enforces $2{\bf D} \leqslant \widetilde{\bf A}^m$ and the product cannot have the maximal exponential growth. Hence, we may assume that $({\bf D}_{i_j})_{j \in \N}$ is not eventually constant. 
Recall that $\widetilde{\bf A}^m$ has an upper triangular block decomposition which is inherited by each of the matrices in $\widetilde{\C{A}}^m$ and also by all of their products. For ${\bf D}_i \in \widetilde{\C{A}}^m$, we denote the $(s,t)$-block by ${\bf D}_{i,st}$, that is, we decompose
\[
{\bf D}_i = \begin{pmatrix}
{\bf D}_{i,11} & \cdots & {\bf D}_{i,1r}\\
\vdots & \ddots & \vdots\\
0 & \cdots & {\bf D}_{i,rr}.
\end{pmatrix}
\]
Since the structure is the same for all matrices in $\widetilde{\C{A}}^m$, the blocks behave like simple entries under matrix multiplication and the structure is preserved under taking products. For such matrices, we take the notational liberty to refer to blocks in the same way that one usually refers to entries. With this convention,
\[
({\bf D}_{i_1} \cdots {\bf D}_{i_{n}})_{st} = \sum_{q_1, q_2, \ldots, q_{n-1} = 1}^r {\bf D}_{i_1,s q_1} {\bf D}_{i_2, q_1 q_2} \cdots {\bf D}_{i_{n}, q_{n-1} t}.
\]
By the upper triangular structure, all the non-vanishing summands fulfill the additional requirement that $s \leqslant q_1 \leqslant q_2 \leqslant \ldots \leqslant q_n \leqslant t$. In particular, the number of nontrivial summands is bounded by the number of possibilities to choose at most $r-1$ positions of increase from the $n$ matrices, and hence by $n^r$. Using formal concatenation, we may represent any increasing sequence $(q_1, \ldots, q_n)$ by
\[
q_1 \cdots q_n = 1^{p_1} 2^{p_2} \cdots r^{p_r} 
\]
with $0 \leqslant p_q \leqslant n$, where $q^0$ means that the element $q$ is omitted. Since there is no path between dominant pure SCCs, $p_i > 0$ is possible for at most one choice of $i \in \mc {D_P}$, whenever the indices correspond to a non-vanishing summand.

Let $\widetilde{\bf A}''$ be the matrix that emerges from the reduced dominator $\widetilde{\bf A}'$ of $\widetilde{\mc A}^m$ by replacing all dominant diagonal blocks ${\bf C}_{ii}^{(m)}$ with $i \in \mc {D_P}$ by the $0$-block, that is, we deplete all dominant blocks that have not already been reduced in the step $\widetilde{\bf A}^m \to \widetilde{\bf A}'$. Clearly, $\rho(\widetilde{\bf A}'') < \rho(\widetilde{\bf A}^m)$, and for ${\bf D} \in \C{A}^m$ we have ${\bf D}_{st} \leqslant \widetilde{\bf A}''_{st}$, unless $st = ii$ for some $i \in {\mc D}_{\mc P}$.

Since we have assumed that $({\bf D}_{i_n})_{n \in \N}$ is not eventually constant, there are infinitely many positions $n$ such that ${\bf D}_{i_{n}} \neq {\bf D}_{i_n + 1}$. Fix such an $n$, and consider any of the nontrivial summands contributing to $({\bf D}_{i_1} \cdots {\bf D}_{i_{2n}})_{st}$, indexed by some
\[
q_1 \cdots q_{2n} = 1^{p_1} \cdots r^{p_r}.
\]
If none of the $i \in \mc {D_P}$ fulfills $p_i > 0$, we can bound each of the blocks by the corresponding blocks of $\widetilde{\bf A}''$. Choosing the sum of all elements as the norm $\|\cdot\|$, the norm of the product is bounded by $\|( \widetilde{\bf A}'')^{2n}_{st} \| \leqslant \|( \widetilde{\bf A}'')^{2n} \| $. 

The remaining case is that $p_i >0$ for precisely one $i \in \mc {D_P}$. We consider several subcases. First, assume that the sequence $i^{p_i}$ is completely contained in $q_1 \cdots q_n$. Then, to obtain an upper bound, we replace all blocks in the first half of the product by the corresponding block of $\widetilde{\bf A}^m$ and every block in the second half of the product by $\widetilde{\bf A}''$. The norm is hence bounded by
\[
\|\widetilde{\bf A}^{nm}_{sq_n}\| \cdot \| (\widetilde{\bf A}'')^{n}_{q_{n+1}t} \| \leqslant \|\widetilde{\bf A}^{nm}\| \cdot \| (\widetilde{\bf A}'')^{n} \|.
\]
The same estimate holds if $i^{p_i}$ is completely contained in $q_{n} \cdots q_{2n-1}$. It remains to consider the case $q_{n-1} = q_n = q_{n+1} = i$. Since $i_n \neq i_{n+1}$, at least one of the blocks ${\bf D}_{i_n,ii}$ or ${\bf D}_{i_{n+1},ii}$ is equal to the $0$-block (because $i \in \mc {D_P}$) and hence the whole product vanishes.
Regarding all possible cases and keeping in mind that there are less than $n^r$ nontrivial summands, we obtain that
\[
\|({\bf D}_{i_1} \cdots {\bf D}_{i_{2n}})_{st}\| \leqslant n^r \|\widetilde{\bf A}^{nm}\| \cdot \| (\widetilde{\bf A}'')^{n} \|.
\] 
Summing over all $s,t \in \{1,\ldots,r\}$ adds only a factor $r^2$ to the upper bound. Since this estimate holds for infinitely many $n$, we find
\begin{align*}
\liminf_{n \to \infty} \|{\bf D}_{i_1} \cdots {\bf D}_{i_{2n}}\|^{1/2n} & \leqslant \liminf_{n \to \infty} \|\widetilde{\bf A}^{nm}\|^{1/2n}\cdot \|(\widetilde{\bf A}'')^n \|^{1/2n}\\ 
&\leqslant (\rho(\widetilde{\bf A}'') \rho(\widetilde{\bf A}^m))^{1/2}\\
&< \rho(\widetilde{\bf A}^m).
\end{align*}
Recalling that $({\bf D}_{i_n})_{n \in \N}$ emerges from a regrouping of the original sequence into blocks of length $m$, we get
\[
\liminf_{n \to \infty} \| \widetilde{\bf A}_{i_1} \cdots \widetilde{\bf A}_{i_n}\|^{1/n}
\leqslant \liminf_{n \to \infty} \|{\bf D}_{i_1} \cdots {\bf D}_{i_n}\|^{1/mn} < \rho(\widetilde{\bf A}),
\]
which shows the first claim.
If $\rho^{\ast}(\widetilde{\bf A})$ is a simple eigenvalue of $\widetilde{\bf A}$, then $G(\widetilde{\bf A})$ has only one dominant SCC. Taking the power $m$ may split up this SCC into several smaller components which are, however, not connected in $G(\widetilde{\bf A}^m)$. Hence, there are no paths between dominant SCCs in this case.
\end{proof}

In the following, we show that assuming that there is no path between different pure dominant SCCs is no restriction as long as we consider the action on an $\widetilde{\bf A}$-eigenvector with an eigenvalue of maximal modulus. To this end, we introduce one more bit of graph-theoretic notation.

\begin{definition}
The \emph{backward closure} of an SCC $V_i$, denoted by $\mc B(i)$, is the set of all vertices $j$ such that there exists a path from $j$ to $i$ in $G(\widetilde{\bf A})$. 
\end{definition}

Clearly, a backward closure consists of complete sets of SCCs. If $V_i$ is nontrivial, we observe that $V_i \subset \mc B(V_i)$. Given several SCCs, $V_{i_1},\ldots,V_{i_n}$ we call $\mc B(i_1,\ldots, i_n) := \bigcup_{j=1}^n \mc B(i_j)$ the backward closure of the corresponding collection. The corresponding subspace is given by
\[
B(i_1,\ldots, i_n) : = {\rm span} \{{\bf e}_j \}_{j \in \mc B(i_1,\ldots,i_n)}.
\] 

\begin{lemma}
\label{LEM:backward-closure-invariance}
The backward closure subspace $B(i)$ of an SCC $V_i$ is invariant under the action of $\widetilde{\bf A}$, that is, $\widetilde{\bf A} B(i) \subset B(i)$. Moreover, the same holds for the backward closure of an arbitrary collection.
\end{lemma}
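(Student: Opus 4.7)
The plan is to reduce the invariance statement to a purely combinatorial observation about the graph $G(\widetilde{\bf A})$, applied basis-vector by basis-vector. Since $B(i) = \operatorname{span}\{{\bf e}_j : j \in \mc B(i)\}$, linearity means it suffices to check that $\widetilde{\bf A} {\bf e}_j \in B(i)$ for every $j \in \mc B(i)$.

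First, I would recall the convention fixed earlier in the paper: $(\widetilde{\bf A})_{kj} > 0$ if and only if there is an edge from vertex $k$ to vertex $j$ in $G(\widetilde{\bf A})$. Consequently, the $j$-th column of $\widetilde{\bf A}$ satisfies
\[
\widetilde{\bf A} {\bf e}_j = \sum_k (\widetilde{\bf A})_{kj}\, {\bf e}_k,
\]
so the support of $\widetilde{\bf A} {\bf e}_j$ (as a vector in the standard basis) is contained in $\{k : k \to j \text{ in } G(\widetilde{\bf A})\}$. Now fix $j \in \mc B(i)$, so there is a path from $j$ to $i$. For any $k$ with $(\widetilde{\bf A})_{kj} > 0$, the edge $k \to j$ can be prepended to this path, yielding a path from $k$ to $i$. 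Hence $k \in \mc B(i)$, i.e.~${\bf e}_k \in B(i)$, which gives $\widetilde{\bf A} {\bf e}_j \in B(i)$. Extending linearly, $\widetilde{\bf A} B(i) \subset B(i)$.

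For the case of a collection of SCCs $V_{i_1}, \ldots, V_{i_n}$, the definition immediately gives $\mc B(i_1,\ldots,i_n) = \bigcup_{\ell=1}^n \mc B(i_\ell)$, so
\[
B(i_1,\ldots,i_n) = \sum_{\ell=1}^n B(i_\ell).
\]
Since invariance under $\widetilde{\bf A}$ is preserved by finite sums of invariant subspaces, the previous step for each individual $B(i_\ell)$ implies the statement for the whole collection.

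There is no real obstacle here: the entire argument is a direct unwinding of the definition of the backward closure together with the correspondence between nonzero matrix entries and edges. The only point that needs a brief sanity check is the direction of the action—namely that left multiplication by $\widetilde{\bf A}$ on a column vector ${\bf e}_j$ yields a combination of ${\bf e}_k$ where the nonzero coefficients correspond precisely to edges \emph{into} $j$—which is exactly what makes the backward (rather than forward) closure the invariant object.
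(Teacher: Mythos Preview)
Your proof is correct and follows essentially the same approach as the paper: both arguments reduce to checking that if $j$ (the paper uses $t$) lies in $\mc B(i)$ and there is an edge $k \to j$, then $k \in \mc B(i)$, so that $\widetilde{\bf A}{\bf e}_j \in B(i)$, and then extend by linearity and to collections via sums of invariant subspaces. Your sanity check on the direction of the matrix action is well placed, and your use of ``sum'' rather than ``direct sum'' for the collection case is in fact slightly more accurate, since distinct backward closures can overlap.
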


\begin{proof}
Since ${\bf e}_s^T \widetilde{\bf A} {\bf e}_t > 0$ if and only if there is an edge from $s$ to $t$ in $G(\widetilde{\bf A})$, we observe that, in this case, a path from $t$ to $V_i$ also implies the existence of a path from $s$ to $V_i$. Hence, if $t \in \mc B(i)$, the vector $\widetilde{\bf A} {\bf e}_t$ is contained in $B(i)$. The same holds of course for linear combinations of ${\bf e}_j$, with $j \in \mc B(i)$. The last claim follows from the fact that $B(i_1,\ldots,i_n)$ is a direct sum of the subspaces $B(i_1),\ldots,B(i_n)$.
\end{proof}

\begin{definition}
We call a dominant SCC $V_i$ \emph{initial}, if there is no path from another dominant $V_j$ to $V_i$. We write $\ID$ for the union of initial dominant SCCs.
\end{definition}

\begin{lemma}
\label{LEM:backward closure-support}
Every nonnegative $\rho$-eigenvector ${\bf v}$ of $\widetilde{\bf A}$ is supported on the backward closure subspace $B(\ID)$.
\end{lemma}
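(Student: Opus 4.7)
The plan is to work in a basis adapted to a topological ordering of the SCCs so that $\widetilde{\bf A}$ takes upper block-triangular form with irreducible diagonal blocks $\widetilde{\bf A}_{nn}$, and to decompose ${\bf v}$ into its corresponding blocks ${\bf v}^{(n)}$. The eigenvalue equation at the $n$-th block then reads
\begin{equation*}
(\rho I - \widetilde{\bf A}_{nn}) {\bf v}^{(n)} \, = \, \sum_{m>n} \widetilde{\bf A}_{nm} {\bf v}^{(m)},
\end{equation*}
with nonnegative right-hand side, and the goal is to show ${\bf v}^{(n)}=0$ whenever $V_n\not\subset \mc B(\ID)$.

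First I would exploit the initial dominant SCCs through a left Perron--Frobenius argument. For each $V_j\in \ID$, the block $\widetilde{\bf A}_{jj}$ is irreducible with spectral radius $\rho$ and hence admits a componentwise strictly positive left Perron eigenvector ${\bf u}^{(j)}$, which I extend by zero to a row vector $\hat{\bf u}^{(j)T}$ on all of $\R^{d'}$. By block-triangularity, $\hat{\bf u}^{(j)T}\widetilde{\bf A}$ has $V_j$-component $\rho\,{\bf u}^{(j)T}$, $V_m$-component ${\bf u}^{(j)T}\widetilde{\bf A}_{jm}$ for $m>j$, and zero components elsewhere. Applying this to ${\bf v}$ together with $\widetilde{\bf A}{\bf v}=\rho{\bf v}$ gives $\sum_{m>j}{\bf u}^{(j)T}\widetilde{\bf A}_{jm}{\bf v}^{(m)}=0$; since each summand is a product of nonnegative quantities, every term vanishes individually, and strict positivity of ${\bf u}^{(j)}$ on $V_j$ forces $\widetilde{\bf A}_{jm}{\bf v}^{(m)}=0$ for every $m>j$.

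The central step is the propagation of these vanishings along the SCC DAG, which rests on the following dichotomy that I would prove first: for any SCC $V_m$, the vector ${\bf v}^{(m)}$ is either identically zero or strictly positive on $V_m$. For $V_m$ non-dominant this follows from the fact that $(\rho I-\widetilde{\bf A}_{mm})^{-1}=\sum_{k\geq 0}\rho^{-k-1}\widetilde{\bf A}_{mm}^k$ has all entries strictly positive, by irreducibility and $\rho>\rho(\widetilde{\bf A}_{mm})$. For $V_m$ dominant, the Fredholm alternative together with strict positivity of the left Perron eigenvector of $\widetilde{\bf A}_{mm}$ force the right-hand side of the eigenvalue equation to vanish, so that ${\bf v}^{(m)}$ is a nonnegative multiple of the strictly positive right Perron eigenvector of $\widetilde{\bf A}_{mm}$. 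Either way, a single forced zero entry annihilates the whole block. Combined with the identity $\widetilde{\bf A}_{jm}{\bf v}^{(m)}=0$, every direct successor $V_m$ of $V_j\in \ID$ in the SCC DAG satisfies ${\bf v}^{(m)}=0$; once this holds, the eigenvalue equation at $V_m$ becomes $\sum_{r>m}\widetilde{\bf A}_{mr}{\bf v}^{(r)}=0$, and the same reasoning propagates the vanishing to direct successors of $V_m$. Iterating, ${\bf v}^{(p)}=0$ for every SCC $V_p$ reachable from $\ID$ but not itself in $\ID$.

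To finish, I would treat the SCCs lying in $\mc B(\ID)^{c}$ that are not reachable from $\ID$; let $W'$ denote their union. A chain-chasing argument shows that every non-initial dominant SCC is reachable from $\ID$: starting from such an SCC, one iteratively passes to a dominant predecessor, and since the graph is finite the chain must terminate at an element of $\ID$. Consequently $W'$ contains no dominant SCC of $\widetilde{\bf A}$, so $\rho(\widetilde{\bf A}|_{W'})<\rho$. Restricting the eigenvalue equation to $W'$, using Lemma~\ref{LEM:backward-closure-invariance} (no edges from $\mc B(\ID)^{c}$ into $\mc B(\ID)$) and the previous paragraph to discard all remaining right-hand side contributions, one finds that ${\bf v}|_{W'}$ is a nonnegative $\rho$-eigenvector of $\widetilde{\bf A}|_{W'}$; the spectral bound then forces ${\bf v}|_{W'}=0$. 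The most delicate ingredient will be the positivity dichotomy for ${\bf v}^{(m)}$ in the dominant case, where the Fredholm alternative and Perron--Frobenius have to interact carefully with the inhomogeneous source term from higher blocks; the remainder is essentially bookkeeping on the SCC DAG.
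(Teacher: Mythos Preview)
Your proposal is correct but proceeds quite differently from the paper's argument, and the difference is instructive.

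The paper works backwards from the support of ${\bf v}$. It introduces the notion of a \emph{${\bf v}$-final} index $i$, namely one with ${\bf v}_i \neq 0$ but $\widetilde{\bf A}_{ij}{\bf v}_j = 0$ for all $j>i$, observes that ${\bf v}$ is supported on the backward closure of $\mc F({\bf v})$, and then shows $\mc F({\bf v}) \subset \ID$. The last step uses an unboundedness contradiction: a ${\bf v}$-final block is a $\rho$-eigenvector of its diagonal block, hence $V_i$ is dominant; were it non-initial, a preceding dominant block $V_s$ would receive a nonvanishing influx from ${\bf v}_i$, and iterating $\widetilde{\bf A}^k$ forces ${\bf v}_s$ to be unbounded.

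You instead work forwards from $\ID$. Your key ingredients are the left-Perron orthogonality relation $\sum_{m>j}{\bf u}^{(j)T}\widetilde{\bf A}_{jm}{\bf v}^{(m)}=0$ at every dominant block $V_j$, and the zero-or-strictly-positive dichotomy for each block component ${\bf v}^{(m)}$. Together these let you propagate ${\bf v}^{(m)}=0$ along the SCC DAG to every proper descendant of $\ID$, and a separate spectral-radius argument disposes of the region $W'$ that is neither in $\mc B(\ID)$ nor in the forward closure of $\ID$. One small point worth making explicit in a write-up: the dichotomy is what lets you pass from $\widetilde{\bf A}_{jm}{\bf v}^{(m)}=0$ with $\widetilde{\bf A}_{jm}\neq 0$ to ${\bf v}^{(m)}=0$, since a nonnegative matrix applied to a strictly positive vector cannot vanish; you invoke this correctly but somewhat implicitly.

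What each approach buys: the paper's ${\bf v}$-final idea is concise and avoids having to partition $\mc B(\ID)^c$ into the forward-closure part and $W'$; on the other hand, your argument never needs to pass to a power of $\widetilde{\bf A}$ and uses only standard Perron--Frobenius facts (resolvent positivity, Fredholm alternative) rather than an ad hoc growth estimate. Both routes rely essentially on nonnegativity of ${\bf v}$---yours at the dichotomy and at the termwise splitting of $\sum_{m>j}{\bf u}^{(j)T}\widetilde{\bf A}_{jm}{\bf v}^{(m)}=0$, the paper's at the unboundedness step---consistent with the Remark following the lemma.
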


\begin{proof}
First, note that taking a power of $\widetilde{\bf A}$ leaves the set of all initial dominant vertices invariant (although it might decompose the corresponding SCCs). The same holds for the corresponding backward closure. Hence, we may replace $\widetilde{\bf A}$ by a higher power whenever convenient.

Suppose that $\widetilde{\bf A}{\bf v} = \rho{\bf v}$ and write ${\bf v} = \sum_{i = 1}^r {\bf v}_i$, with ${\bf v}_i$ supported on the SCC $V_i$. 
Writing ${\bf C}_{ij}$ for the $(i,j)$-block of $\widetilde{\bf A}$, we get
\[
\rho\, {\bf v}_i = \sum_{j\geqslant i} {\bf C}_{ij} {\bf v}_j.
\]
We say that $i$ is ${\bf v}$-final if ${\bf v}_i \neq 0$ and ${\bf C}_{ij} {\bf v}_j = 0$ for all $j > i$, and we denote the corresponding set of indices by $\mc F({\bf v})$. 
Observe that ${\bf v}$ is supported on $B(\mc F({\bf v}))$. Indeed, each ${\bf v}_i \neq 0$ has either $i \in \mc F({\bf v})$ or there is an edge from $i$ to some $j$ with $j>i$. The same reasoning applies to the index $j$, and we obtain a path to a ${\bf v}$-final index after at most $r-1$ steps. 

It remains to show that $\mc F({\bf v}) \subset \ID$ since this then implies that ${\bf v}$ is supported on $B(\mc F({\bf v})) \subset B(\ID)$.
For $i \in \mc F({\bf v})$, we have 
\[
\rho\, {\bf v}_i = {\bf B}_{ii} {\bf v}_i,
\]
and hence $V_i$ must be dominant. For a moment suppose that $V_i$ is not initial. Then, there exists another dominant SCC $V_s$ and a path $s \to i$. Possibly taking a higher power of $\widetilde{\bf A}$, we may assume that ${\bf C}_{si},{\bf C}_{ii},{\bf C}_{ss}$ are strictly positive.
Using the block-structure of $\widetilde{\bf A}$, the nonnegativity of ${\bf v}$ and ${\bf C}_{ii} {\bf v}_i = \rho \, {\bf v}_i$, we get
\[
{\bf v}_s = 
\frac{1}{\rho^k}(\widetilde{\bf A}^k {\bf v})_s
\geqslant \frac{1}{\rho^k} \sum_{j = 0}^{k-1} {\bf C}_{ss}^j {\bf C}_{si} {\bf C}_{ii}^{k-j-1} {\bf v}_i
= \sum_{j = 0}^{k-1} \frac{{\bf C}_{ss}^j}{\rho^j} \cdot\frac{{\bf C}_{si}}{\rho} {\bf v}_i,
\]
which is unbounded, since, by standard Perron--Frobenius theory, $\rho^{-j}{\bf C}_{ss}^j$ converges to a projector with strictly positive entries. This contradiction shows that $V_i$ indeed needs to be initial. The claim follows.
\end{proof}

\begin{remark}
The assumption that ${\bf v}$ is nonnegative is essential for the validity of Lemma~\ref{LEM:backward closure-support}. This can be seen from the example
\[
\widetilde{\bf A} = \begin{pmatrix}
1 & 1 & 1\\
0 & 1 & 0\\
0 & 0 & 1
\end{pmatrix},
\]
which has an eigenvector ${\bf v} = (0,1,-1)^T$ that is {\em not} supported on the backward closure of the initial dominant ${\bf e}_1$. The reason behind this is that different signs make it possible to cancel out the influx from other dominant components to a given initial dominant component. \exend
\end{remark}

\begin{lemma}
\label{LEM:aperiodic-vanishing}
Suppose that $\rho:=\rho^*(\widetilde{\C{A}}) = \rho(\widetilde{\bf A})$, and that $ {\bf v} \in \R^{d'}$ is a nonnegative $\rho$-eigenvector of $\widetilde{\bf A}$. Then, for every aperiodic sequence $(\widetilde{\bf A}_{i_j})_{j \in \N}$, we have
\[
\liminf_{n \to \infty} \frac{1}{\rho^{n}} \| \widetilde{\bf A}_{i_1} \cdots \widetilde{\bf A}_{i_{n}} {\bf v} \| = 0.
\]
\end{lemma}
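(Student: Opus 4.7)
The plan is to reduce to an application of Proposition~\ref{PROP:aperiodic-products} after restricting the system to the backward closure $B(\ID)$ of the initial dominant SCCs. By Lemma~\ref{LEM:backward closure-support}, the nonnegative $\rho$-eigenvector ${\bf v}$ is supported on $B(\ID)$; and since each $\widetilde{\bf A}_j \leqslant \widetilde{\bf A}$ element-wise, the argument of Lemma~\ref{LEM:backward-closure-invariance} shows that $B(\ID)$ is invariant under each $\widetilde{\bf A}_j$ as well. Writing $\widetilde{\bf A}'_j$ and $\widetilde{\bf A}'$ for the corresponding restrictions, we have
\[
\widetilde{\bf A}_{i_1}\cdots\widetilde{\bf A}_{i_n}{\bf v}
= \widetilde{\bf A}'_{i_1}\cdots \widetilde{\bf A}'_{i_n}{\bf v},
\]
so it suffices to show that $\liminf_{n \to \infty} \rho^{-n}\|\widetilde{\bf A}'_{i_1}\cdots\widetilde{\bf A}'_{i_n}\| = 0$.

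Next, one verifies that the restricted system inherits the hypotheses of Proposition~\ref{PROP:aperiodic-products}. Since $\widetilde{\bf A}'{\bf v} = \rho{\bf v}$ with ${\bf v}\geqslant 0$ and $\widetilde{\bf A}'\leqslant \widetilde{\bf A}$ element-wise, we have $\rho(\widetilde{\bf A}') = \rho$. Moreover, no non-initial dominant SCC $V_j$ of $\widetilde{\bf A}$ lies in $B(\ID)$, because a path from $V_j$ to an element of $\ID$ would contradict the initial property of the latter; hence the dominant SCCs of $\widetilde{\bf A}'$ are precisely the components of $\ID$. By initiality, distinct components of $\ID$ admit no paths between each other, and this extends to their subdivisions in $(\widetilde{\bf A}')^m$. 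If $\rho^*(\widetilde{\mc A}')<\rho$, then $\|\widetilde{\bf A}'_{i_1}\cdots\widetilde{\bf A}'_{i_n}\|^{1/n} \leqslant \rho^*(\widetilde{\mc A}') + o(1) < \rho$ uniformly in the sequence, and the claim follows. Otherwise $\rho^*(\widetilde{\mc A}')=\rho$, and I would rerun the proof of Proposition~\ref{PROP:aperiodic-products} inside the restricted system.

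The main technical point I expect arises because distinct matrices $\widetilde{\bf A}_i$ may share the same restriction to $B(\ID)$, so an aperiodic sequence in the original system can become periodic in the restricted one. This only affects the eventually-constant-after-regrouping case in the proof of Proposition~\ref{PROP:aperiodic-products}: if the length-$m$ blocks $({\bf D}'_j)_{j\in\N}$ in the restricted system are eventually equal to some ${\bf D}'$, the aperiodicity of $(\widetilde{\bf A}_{i_j})$ in the original system forces at least two distinct tuples $(i_1, \ldots, i_m)\neq (i_1',\ldots, i_m')$ to yield the same restricted product ${\bf D}'$; otherwise one fixed tuple would populate every block for all large $j$, rendering the original sequence eventually periodic. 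Both products appear as separate summands in $\widetilde{\bf A}'^{m} = \sum_w \widetilde{\bf A}'_w$, giving $2{\bf D}'\leqslant \widetilde{\bf A}'^{m}$ element-wise, and Perron--Frobenius monotonicity then yields $\rho({\bf D}')\leqslant \rho^m/2 < \rho^m$. The non-eventually-constant case in the proof of Proposition~\ref{PROP:aperiodic-products} relies only on the no-path condition verified above and applies verbatim. Combining the two cases, $\liminf_{n\to\infty}\|\widetilde{\bf A}'_{i_1}\cdots\widetilde{\bf A}'_{i_n}\|^{1/n}<\rho$, which yields the lemma.
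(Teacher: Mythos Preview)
Your proof is correct and follows essentially the same route as the paper: restrict to $B(\ID)$ via Lemmas~\ref{LEM:backward closure-support} and~\ref{LEM:backward-closure-invariance}, then either the joint spectral radius of the restricted family drops below $\rho$ or Proposition~\ref{PROP:aperiodic-products} applies to it. You are in fact slightly more careful than the paper in making explicit why the aperiodicity hypothesis survives the restriction (via the $2{\bf D}'\leqslant(\widetilde{\bf A}')^m$ argument when distinct index tuples yield the same restricted block), a point the paper's proof leaves implicit when it simply asserts that ``the conditions of Proposition~\ref{PROP:aperiodic-products} are satisfied for the restricted family.''
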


\begin{proof}
For a moment, assume that the limit inferior is positive. Then, we get 
\begin{equation}
\label{EQ:liminf-max}
\liminf_{n \to \infty} \| \widetilde{\bf A}_{i_1} \cdots \widetilde{\bf A}_{i_n} {\bf v}\|^{1/n} = \rho.
\end{equation}
Since ${\bf v}$ is supported on $B(\mc {D_I})$ (Lemma~\ref{LEM:backward closure-support}) and this set is invariant under each $\widetilde{\bf A}_j$ (Lemma~\ref{LEM:backward-closure-invariance}), it suffices to consider the restriction of the family $\widetilde{\mc A}$ to $B(\mc {D_I})$. If this family has a joint spectral radius strictly smaller than $\rho$ we directly obtain a contradiction to \eqref{EQ:liminf-max}. Otherwise, since there is no path between dominant SCCs in $\mc B(\mc {D_I})$ (and since this property is stable under taking powers of $\widetilde{\bf A}$), the conditions of Proposition~\ref{PROP:aperiodic-products} are satisfied for the restricted family and we again reach a contradiction.
\end{proof}

The following result directly implies Theorem \ref{thm:mainrational} stated in the Introduction.

\begin{proposition}
\label{PROP:no-irrational-masses}
Let $\mu$ be a ghost measure of a positively presentable, $k$-regular sequence $f$ and $y \notin \Q \cap [0,1]$. Then, $\mu(\{y\}) = 0$.
\end{proposition}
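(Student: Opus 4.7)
The plan is to use Corollary~\ref{COR:point-mass-sequence} to express
\[
\mu_h(\{y\}) = \lim_{n \to \infty} c_n(h,y), \qquad c_n(h,y) = \frac{{\bf u}^T {\bf A}_{x_1} \widetilde{\bf A}_{x_2} \cdots \widetilde{\bf A}_{x_{n+1}} {\bf R}^n {\bf R}_h {\bf Pv}}{\rho^n \, {\bf u}^T ({\bf A} - {\bf A}_0) {\bf R}_h {\bf Pv}},
\]
where $x = x_1 x_2 \cdots$ is the unique base-$k$ coding of the irrational $y$, and where \eqref{EQ:tilde-replacement-several} lets me tilde every factor past the first (since $x_1 \neq 0$ is the leading base-$k$ digit of an integer in $[k^n, k^{n+1})$). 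The denominator is bounded away from zero by nondegeneracy, the coding $(x_j)$ is aperiodic because $y \notin \mathbb{Q}$ forces its base-$k$ expansion not to be eventually periodic, and the limit is a nonnegative real number. Hence it will suffice to show $\liminf_n |c_n(h,y)| = 0$.

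I split according to the comparison of $\rho^*(\widetilde{\mc A})$ and $\rho$. If $\rho^*(\widetilde{\mc A}) < \rho$, the definition of the joint spectral radius gives $\|\widetilde{\bf A}_{x_2} \cdots \widetilde{\bf A}_{x_{n+1}}\| \leqslant C(\rho^* + \varepsilon)^n$ for any $\varepsilon > 0$ and $n$ large, and $|c_n(h,y)|$ decays exponentially. So assume $\rho^*(\widetilde{\mc A}) = \rho$, which is the regime of Lemma~\ref{LEM:aperiodic-vanishing}.

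The key technical step will be a bound of the form $|{\bf R}^n {\bf R}_h {\bf Pv}| \leqslant C {\bf v}^+$, componentwise and uniformly in $n$, for some fixed nonnegative $\rho$-eigenvector ${\bf v}^+$ of $\widetilde{\bf A}$. By Lemma~\ref{LEM:RA-limit}, the image of $\bf P$ is spanned by the top vectors ${\bf v}_{\lambda,i,1}$ of the longest Jordan chains for the peripheral spectrum, and ${\bf R}^n {\bf R}_h$ scales each such vector by a unit-modulus complex number, so ${\bf R}^n {\bf R}_h {\bf Pv}$ is always a fixed combination of the ${\bf v}_{\lambda,i,1}$ with bounded coefficients. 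I will then invoke the Rothblum-type structural theory of reducible nonnegative matrices, which says that the top of each maximal Jordan chain for a peripheral eigenvalue of $\widetilde{\bf A}$ is supported on an initial dominant SCC (it is the ``source'' of the chain of dominant basic classes whose length realises the index of $\lambda$); hence ${\bf R}^n {\bf R}_h {\bf Pv}$ is supported on $B(\ID)$. A nonnegative $\rho$-eigenvector ${\bf v}^+$ that is strictly positive on all of $B(\ID)$ is then produced by summing the strictly positive Perron eigenvectors of the diagonal blocks indexed by $\ID$ and extending, propagating positivity backwards along reaching paths by solving the eigenvalue equation.

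Granted the domination, nonnegativity of the row vector ${\bf u}^T {\bf A}_{x_1} \widetilde{\bf A}_{x_2} \cdots \widetilde{\bf A}_{x_{n+1}}$ combined with the triangle inequality yields
\[
|c_n(h,y)| \leqslant C' \rho^{-n} \|\widetilde{\bf A}_{x_2} \cdots \widetilde{\bf A}_{x_{n+1}} {\bf v}^+\|,
\]
and Lemma~\ref{LEM:aperiodic-vanishing}, applied to the aperiodic tail $(x_j)_{j \geqslant 2}$ and the nonnegative $\rho$-eigenvector ${\bf v}^+$, forces the liminf of the right-hand side to be zero, closing the argument. The main obstacle will be the structural claim that the tops of the maximal peripheral Jordan chains are supported on $B(\ID)$; although this is classical in the combinatorial Perron--Frobenius theory of reducible nonnegative matrices, it has to be invoked (or verified from the block-triangular form) rather than being a direct consequence of the lemmas already established in the paper.
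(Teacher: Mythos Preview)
Your approach is viable but substantially more involved than the paper's. The paper exploits a simplification you overlooked: since $\widetilde{\bf A}$ is nonnegative, its peripheral spectrum is cyclic and the rotation group $G$ is finite of order $q$ (the period of $\widetilde{\bf A}$). Writing $h = g^j$, the paper simply passes to the arithmetic subsequence $n_m = qm + j$, along which ${\bf R}^{n_m} {\bf R}_h = {\bf 1}$ and the rotation factor vanishes. Then ${\bf v}' = {\bf Pv}$ is itself a nonnegative $\rho^q$-eigenvector of $\widetilde{\bf A}^q$ (nonnegative because ${\bf P} = \lim_{n} c_{qn} \widetilde{\bf A}^{qn}$ with $\widetilde{\bf A}$ and ${\bf v}$ nonnegative), and Lemma~\ref{LEM:aperiodic-vanishing} applies directly to the family $\widetilde{\mc A}^q$ with eigenvector ${\bf v}'$. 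No Rothblum-type structure theory, no auxiliary ${\bf v}^+$, no componentwise domination; since only the $\liminf$ is needed, one subsequence suffices.

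Your route trades this elementary subsequence trick for a uniform-in-$n$ domination of ${\bf R}^n {\bf R}_h {\bf Pv}$, which forces you to control the support of the top Jordan vectors for \emph{all} peripheral eigenvalues and then manufacture a nonnegative $\rho$-eigenvector strictly positive on $B(\ID)$. Both steps are plausible and belong to the Rothblum--Schneider circle of results, but neither is established in the paper, as you acknowledge. One further wrinkle: the nonnegativity of the row vector ${\bf u}^T {\bf A}_{x_1} \widetilde{\bf A}_{x_2} \cdots \widetilde{\bf A}_{x_{n+1}}$ is not guaranteed by the definition of positively presentable (only the $\widetilde{\bf A}_i$ and ${\bf v}$ are made nonnegative by the basis change on $V$). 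This is easily repaired by first passing to $\|{\bf u}^T {\bf A}_{x_1}\|\cdot\|\widetilde{\bf A}_{x_2} \cdots \widetilde{\bf A}_{x_{n+1}} {\bf R}^n {\bf R}_h {\bf Pv}\|$ and then using nonnegativity of the matrix product $\widetilde{\bf A}_{x_2} \cdots \widetilde{\bf A}_{x_{n+1}}$ together with your componentwise bound.
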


\begin{proof}
If $\rho^{\ast}(\widetilde{\mc A}) < \rho(\widetilde{\bf A})$,  then the same estimate holds for $\rho^{\ast}(\widehat{\mc A}) \leqslant \rho^{\ast}(\widetilde{\mc A})$ and the measure $\mu$ is continuous by Theorem~\ref{PROP:continuity-condition}. We may therefore restrict to the case $\rho = \rho^{\ast}(\widetilde{\mc A}) = \rho(\widetilde{\bf A})$ in the following.
Let $q$ be the period of $\widetilde{\bf A}$ and $\mu = \mu_h$ with $h \in G$. Recall that in the case of nonnegative matrices, $G$ is a $q$-cyclic group, generated by the element $g$. Hence, $h$ is of the form $h = g^j$ for some $0\leqslant j \leqslant q-1$. Due to the assumption that $y$ is irrational, it has a unique, aperiodic coding sequence $x \in \pi^{-1}(y)$.
By Corollary~\ref{COR:point-mass-sequence} (and the observation that ${\bf u}^T {\bf A}_{x_1} \in V$), there exists a constant $c>0$ such that 
\[
\mu(\{y\}) \leqslant \liminf_{n \to \infty} \frac{c}{\rho^n} \|\widetilde{ \bf A}_{x_2} \ldots  \widetilde{\bf A}_{x_{n+1}} {\bf R}^n {\bf R}_h {\bf P} {\bf v}\|. 
\]
Since ${\bf R}^q$ is the identity matrix, it follows by 
\[
{\bf P}= \lim_{n \to \infty} c_{qn} \widetilde{\bf A}^{qn},
\]
and the nonnegativity of $\bf v$ that ${\bf v}' = {\bf P} {\bf v}$ is also a nonnegative vector. By the same expression for ${\bf P}$, we also observe that ${\bf v}'$ is a $\rho^q$-eigenvector for $\widetilde{\bf A}^q$. For the rotation matrices we get 
\[
{\bf R}_h = {\bf R}_{g^j} = {\bf R}^{-j},
\]
due to the fact that ${\bf R} = {\bf R}_{g^{-1}}$. In particular, for $n_m = qm + j$, we obtain
\[
{\bf R}^{n_m} {\bf R}_h = {\bf R}^{qm} = {\bf 1},
\]
the identity matrix. Hence, for $j'=j+q$,
\begin{align*}
\mu(\{y\})& \leqslant \liminf_{m \to \infty} \frac{c}{\rho^{qm+j'}} \|\widetilde{\bf A}_{x_2} \cdots \widetilde{\bf A}_{x_{qm+j'}} {\bf v}'\|
\\ &\leqslant \frac{c}{\rho^{j'-1}} \|\widetilde{\bf A}_{x_2} \cdots \widetilde{\bf A}_{x_{j'}}\| \liminf_{m \to \infty} \frac{1}{\rho^{qm}} \|\widetilde{\bf A}_{x_{1+j'}} \cdots \widetilde{\bf A}_{x_{qm + j'}} {\bf v}'\|.
\end{align*}
At this point, Lemma~\ref{LEM:aperiodic-vanishing} is applicable to the collection $\widetilde{\mc A}^q$ with sum-matrix $\widetilde{\bf A}^q$, spectral radius $\rho^q$ and eigenvector ${\bf v}'$ because the corresponding regrouping of indices remains aperiodic. This proves $\mu(\{y\}) = 0$, as desired. 
\end{proof}

\begin{example}\label{EX:pp+ac}
Since the elements $C_n(h)$ which determine the ghost measure $\mu_h$ have such a similar structure, it is natural to ask whether all ghost measures of a fixed sequence $f$ are in fact equivalent, maybe under the additional requirement that $f$ is positively presentable. As we will see however, this is not even the case for the pure point part of the ghost measures. 

For example, consider a collection $\{{\bf A}_0, {\bf A}_1\}$ with the following graph,
\begin{center}
\begin{tikzpicture}[->, shorten >=1pt, thick, main node/.style={circle,draw,font=\Large\bfseries}, scale = 0.8]

\node[main node](1) at (0,0) {${\bf e}_2$};
\node[main node](2) at (3,0) {${\bf e}_1$};
\node[main node](3) at (6,0) {${\bf e}_3$};
\node[main node](4) at (9,0) {${\bf e}_4$};

\draw (2) to node[above]{$1$} (1);
\draw (2) to node[above]{$1$} (3);

\draw (1) to[out = 160, in = 200, looseness=8] node[left, xshift= 0.03cm] {$1$} (1);
\draw (1) to[out = 140, in = 220, looseness=9] node[left, xshift= 0.05cm] {$0$} (1);

\draw (3) to[out = 30, in = 150] node[above] {$0$} (4);
\draw (4) to[out = 210, in = 330] node[below] {$1$}(3);

\end{tikzpicture}
\end{center}
We choose unit weights except on the self-loops on ${\bf e}_2$, which each get weight $1/2$. This ensures that the dominant SCCs are given by $\{{\bf e}_2\}$ and $\{ {\bf e}_3, {\bf e}_4 \}$. 
Intuitively, the left part of the graph corresponds to an absolutely continuous component, whereas the right part is responsible for pure points.
We may choose an ${\bf A}^2$-eigenvector ${\bf v}$ (to the dominant eigenvalue $\rho = 1$) such that the last component vanishes, for example ${\bf v} = (1,1,1,0)^T$. Let $x = (10)^{\infty}$ and observe that ${\bf e}_3$ can be reached from ${\bf u} = {\bf e}_1$ only along paths with an odd length. Take $f$ to be the sequence with representation $({\bf u}, \{{\bf A}_0, {\bf A}_1\}, {\bf v})$. 
 In this example we can calculate both ghost measures explicitly. Let $n \in \N$ and $k^n \leqslant m < k^{n+1}$. Via a direct calculation (or tracing the available paths in the above diagram) we obtain that
\[
{\bf u}^T {\bf A}_{(m)_k} {\bf v} = 
\begin{cases}
1 + 2^{-n}, & \mbox{if } n \in 2\N \mbox{ and } (m)_k = (10)^{n/2} 1,
\\ 2^{-n}, & \mbox{otherwise}.
\end{cases}
\]
Since this coincides with $\mu_n(I_{n,m})$ (up to the rescaling to a probability measure), we obtain that $\mu_n$ approaches Lebesgue measure if $n$ is odd, that is $\mu_{f,1} = \operatorname{Leb}$. Similarly, we get
\[
\mu_{f,2} = \frac{1}{2}(\operatorname{Leb} + \delta_{\pi(x)}),
\]
which is a ghost measure of mixed spectral type. \exend
\end{example}

%%%%%%%%%%%%%%%%%%%%%%%%%%%%%%%%%%%%%%%%%%%%%%
\section{Concluding remarks}\label{sec:conc}
%%%%%%%%%%%%%%%%%%%%%%%%%%%%%%%%%%%%%%%%%%%%%%

The immediate questions that arise concern the necessity of various assumptions. 

The nondegeneracy assumption for a regular sequence is strictly stronger than required for the approximants $\mu_N$ to be well-defined. If the sequence is degenerate, smaller eigenvalues of the matrix $\widetilde{\bf A}$ can be expected to enter the construction of an appropriate group parametrisation. Whether or not they do may depend on the Diophantine properties of the angles that occur in the peripheral spectrum. There is a lot of room for exploration here.

It may be that not all relevant lead Jordan eigenvectors appear in ${\bf P}{\bf v}$. If this is not the case, there is a nontrivial subgroup of $G$ that gives a group parametrisation. If all measures are continuous, it should be straightforward to show that an appropriate restriction makes the parametrisation one-to-one, turning it into a homeomorphism. This endows the set of $g$-measures with a group structure. Does this hold without the continuity assumption, or maybe even under weaker nondegeneracy assumptions?

Under the strictest of conditions, one has the nicest looking results, especially considering spectral properties. In particular, our results prove the following characterisation.

\begin{proposition} Let $f$ be a regular sequence and suppose that the set $\mathcal{A}$ is irreducible and that ${\bf A}$ is nonnegative and primitive. Then there is a unique ghost measure $\mu_f$, which is spectrally pure. Moreover, 
\begin{enumerate}
\item[(i)] $\rho^*=\rho$ if, and only if, $\mu_f$ is pure point,
\item[(ii)] $\rho/k<\rho^*\neq\rho$ if, and only if, $\mu_f$ is singular continuous and 
\item[(iii)] $\rho/k=\rho^*$ if, and only if, $\mu_f$ is absolutely continuous.
\end{enumerate}
\end{proposition}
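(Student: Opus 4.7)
The plan is to reduce this proposition to direct applications of the structural theorems already established in the paper. First, I would exploit the irreducibility of $\mathcal{A}$ to show that $V = \widehat{V} = \mathbb{R}^d$, so that $\widetilde{\mathcal{A}} = \widehat{\mathcal{A}} = \mathcal{A}$ and $\widetilde{\mathbf{A}} = \widehat{\mathbf{A}} = \mathbf{A}$. The key observation is that both $V$ and $\widehat{V}$ are nonzero $\mathcal{A}$-invariant subspaces by construction (the former contains ${\bf u}^T{\bf A}_i$ for each $i\neq 0$; the latter contains ${\bf Pv}\neq 0$ by nondegeneracy); irreducibility of $\mathcal{A}$ then forces each of them to be all of $\mathbb{R}^d$. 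This collapses the decorated notation $\widetilde{\cdot}$ and $\widehat{\cdot}$ to the undecorated versions throughout the rest of the argument.

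Second, I would establish existence, uniqueness, and spectral purity of the ghost measure. Primitivity of $\mathbf{A}$ guarantees that $\rho = \rho(\mathbf{A})$ is a simple eigenvalue and the unique element of the peripheral spectrum. The lemma in Section~\ref{SEC:param} guaranteeing nondegeneracy under these conditions applies, so $f$ is nondegenerate, and Corollary~\ref{cor:unique} provides a unique ghost measure $\mu_f = \lim_{N \to \infty} \mu_N$. Since the group $G$ is trivial, the phase rotation $\mathbf{R}$ is the identity and commutes trivially with each $\mathbf{A}_i$; Corollary~\ref{COR:reduced-form} therefore delivers a reduced representation for $\mu_f$ in which $\rho$ remains the unique simple maximal eigenvalue of $\widehat{\mathbf{B}}$, and Theorem~\ref{thm:specpure} yields spectral purity of $\mu_f$.

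Third, the three equivalences follow from combining the fundamental inequality $\rho/k \leqslant \rho^* \leqslant \rho$ with the continuity dichotomy and the spectral classification already proved. For (i), Theorem~\ref{PROP:continuity-condition} gives that $\rho^* = \rho$ if and only if $\mu_f$ fails to be continuous; by spectral purity, this is equivalent to $\mu_f$ being pure point. For (ii) and (iii), the complementary regime $\rho^* < \rho$ holds, which by the same theorem makes $\mu_f$ continuous, and Theorem~\ref{thm:specclass} applies (its hypotheses---nonnegativity of $\widehat{\mathcal{A}} = \mathcal{A}$ and absence of a shared nontrivial invariant subspace---being precisely our standing assumptions) to give $\rho/k < \rho^*$ iff $\mu_f$ is singular continuous, and $\rho/k = \rho^*$ iff $\mu_f$ is absolutely continuous.

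I do not anticipate any substantive obstacle: the content of the proposition is a compact restatement of the sharpest form of the earlier classification under the most stringent hypotheses, and the argument is essentially bookkeeping to verify that every hypothesis of the invoked theorems is in force. The only point demanding any real care is the first step—checking that irreducibility collapses both $V$ and $\widehat{V}$ to the full ambient space—after which the remaining arguments chain together mechanically.
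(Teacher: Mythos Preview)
Your proposal is correct and matches the paper's approach: the proposition sits in the concluding remarks without an explicit proof, billed as a characterisation that ``our results prove'', and your unpacking of which theorems to cite (Corollary~\ref{cor:unique} for uniqueness, Corollary~\ref{COR:reduced-form} plus Theorem~\ref{thm:specpure} for spectral purity, Theorem~\ref{PROP:continuity-condition} for the pure-point dichotomy, Theorem~\ref{thm:specclass} for (ii)--(iii)) is exactly the intended derivation.

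Two small points. First, when invoking Theorem~\ref{thm:specclass} you assert that nonnegativity of each matrix in $\widehat{\mathcal{A}}=\mathcal{A}$ is ``precisely our standing assumptions'', but the proposition as literally stated only assumes the \emph{sum} ${\bf A}$ is nonnegative; Proposition~\ref{prop:sc2} (underlying Theorem~\ref{thm:specclass}(i)) genuinely needs the individual ${\bf A}_i$ to be nonnegative. Given the framing (``Under the strictest of conditions'') and the results being summarised, the intended reading is surely that each ${\bf A}_i$ is nonnegative, but you should make that interpretation explicit rather than claim it is given. Second, your first step uses ${\bf Pv}\neq 0$ to conclude $\widehat{V}\neq 0$ before nondegeneracy has been established; simply reorder---show $V=\mathbb{R}^d$ first (so $\widetilde{\bf A}={\bf A}\neq 0$), apply the nondegeneracy lemma to get ${\bf Pv}\neq 0$, and only then deduce $\widehat{V}=\mathbb{R}^d$ from irreducibility.
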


\noindent But, what are the minimal conditions for spectral characteristics to be the same for {\em all} ghost measures? In particular, is there a natural (weak) assumption for which the above proposition holds for the set of all ghost measures, that is, when there is not a unique ghost measure?

Concerning the pure points of ghost measures, we ask, can maximal growth at irrational positions appear, if we drop the nonnegativity assumption of the matrices? Disproving this might necessitate a deeper dive into more general versions of our adapted ``finiteness property'' result.

As a final thought, one could ask about the multifractal analysis. Here, under appropriate assumptions on the matrices, one should be able to harvest the results of Feng \cite{F2003,F2009}, that give a `fine resolution' for the ghost measures. Is it uniform over all ghost measures, under what conditions, and, what does it tell us about the original sequence? At the moment, this remains a mystery.

%%%%%%%%%%%%%%%%%%%%%%%%%%%%%%%%%%%%%%%%%%%%%%
\section*{Acknowledgements}
%%%%%%%%%%%%%%%%%%%%%%%%%%%%%%%%%%%%%%%%%%%%%%

It is our pleasure to thank Michael Baake for helpful conversations and comments, and for suggesting important references. MC acknowledges the support of the German Research Foundation (DFG) via SFB 1283/2 2021–317210226, JE acknowledges the support of the Commonwealth of Australia, PG
acknowledges support from the German Research Foundation (DFG) through grant GO
3794/1-1 and NM is supported by the German Academic Exchange Service (DAAD) through a Postdoctoral Researchers International Mobility Experience (PRIME) Fellowship.

%%%%%%%%%%%%%%%%%%%%%%%%%%%%%%%%%%%%%%%%%%%%%%%%
\bibliographystyle{amsplain}
\providecommand{\bysame}{\leavevmode\hbox to3em{\hrulefill}\thinspace}
\providecommand{\MR}{\relax\ifhmode\unskip\space\fi MR }
% \MRhref is called by the amsart/book/proc definition of \MR.
\providecommand{\MRhref}[2]{%
  \href{http://www.ams.org/mathscinet-getitem?mr=#1}{#2}
}
\providecommand{\href}[2]{#2}

%%%%%%%%%%%%%%%%%%%%%%%%%%%%%%%%%%%%%%%%%%%%%%%%

\end{document}